\definecolor{red}{rgb}{1,0,0}
\definecolor{green}{rgb}{0,1,0}
\definecolor{blue}{rgb}{0,0,1}
\definecolor{refkey}{gray}{.625}
\definecolor{labelkey}{gray}{.625}
\newcommand{\id}{\mathrm{Id}}
\newcommand{\R}{\mathbb{R}}
\newcommand{\g}{\mathfrak{g}}
\newcommand{\h}{\mathfrak{h}}
\newcommand{\saut}{\operatorname{sAut}}
\newcommand{\K}{\mathbb{K}}
\newcommand{\Mf}{\operatorname{Mfld}}
 \def\title@font{\normalsize\bfseries}
 \let\ltx@maketitle\@maketitle
 \def\@maketitle{\bgroup%
 \let\ltx@title\@title%
 \def\@\title{\resizebox{\textwidth}{!}{%
  \mbox{\title@font\ltx@title}%
 }}%
 \ltx@maketitle%
 \egroup}
\theoremstyle{plain}
\newtheorem{lem}[equation]{Lemma}
\newtheorem{Cor}[equation]{Corollary}
\newtheorem{Thm}[equation]{Theorem}
\newtheorem{theorem}{Theorem}
\newtheorem{Def}[equation]{Definition}
\newtheorem{def-prop}[equation]{Definition-Proposition}
\newtheorem{prop}[equation]{Proposition}
\newtheorem{prop-def}[equation]{Proposition-Definition}
\newtheorem{Ex}[equation]{Example}
\newtheorem{Rem}[equation]{Remark}
\numberwithin{equation}{section}
\begin{document}
\def\a{\mathfrak{a}}
\def\C{\mathbb{C}}
\def\CE{\mathrm{CE}}
\def\D{\mathcal{D}}
\def\E{\mathscr{E}}
\def\ev{\mathrm{ev}}
\def\F{\mathscr{F}}
\def\g{\mathfrak{g}}
\def\H{\textbf{H}}
\def\j{\mathscr{J}}
\def\L{\mathcal{L}}
\def\M{\mathcal{M}}
\def\m{\mathfrak{m}}
\def\t{\mathfrak{t}}
\def\O{\mathcal{O}}
\def\P{\mathcal{P}}
\def\r{\mathcal{R}}
\def\U{\mathcal{U}}
\def\v{\mathscr{A}}
\def\w{\mathfrak{W}}
\def\X{\mathbb{X}}
\def\Y{\mathbb{Y}}
\def\spec{\text{spec}}
\def\Im{\text{Im}}
\def\coker{\operatorname{coker}}
\def\Ext{\operatorname{Ext}}
\def\End{\operatorname{End}}
\def\id{\mathrm{Id}}
\def\Der{\operatorname{Der}}
\def\IDer{\operatorname{IDer}}
\def\Hom{\operatorname{Hom}}
\def\pr{\operatorname{pr}}
\def\Map{\operatorname{Map}}
\def\Mod{\operatorname{Mod}}
\def\sgn{\operatorname{sgn}}
\def\sh{\operatorname{sh}}

\newcommand{\Linfty}{L_\infty}
\newcommand{\piepartial}{\eth}
\newcommand{\inputvariable}{\cdot}

\newcommand{\Artinring}{\mathfrak{a}}
\newcommand{\MC}{\operatorname{MC}}
\newcommand{\MCgA}{\textrm{MC}_\g(\v)}
\newcommand{\MCLPgA}{\textrm{MC}_{\Omega^\bullet_A(B)}(\v)}
\newcommand{\MCfunctor}{{\rm MC}_{\Omega^\bullet_A(B)}}
\newcommand{\algDefFctLA}{{\rm algDef}_{{(L,A)}}}
\newcommand{\weakDefFctLA}{{\rm wkDef}_{{(L,A)}}}
\newcommand{\firstDef}{{\rm Def}_\g}
\newcommand{\MCfunctorg}{{\rm MC}_\g}
\newcommand{\pairing}[2]{\langle #1,#2\rangle}
\newcommand{\rank}{\mathrm{rank}}
\newcommand{\dA}{d_A}
\newcommand{\Linftythree }{L_{\leqslant 3}}
\newcommand{\infDefFctLA}{{\rm wDef}_{{(L,A)}}}
\newcommand{\sinfDefFctLA}{{\rm sDef}_{{(L,A)}}}
\newcommand{{\newboundary}}{\kappa}
\newcommand{\Diff}{\mathrm{Diff}}
\newcommand{\CinfMK}{C^\infty(M,\K)}
\newcommand{\pra}{\mathrm{Pr}_A}
\newcommand{\prav}{\mathrm{Pr}_A}
\newcommand{\prb}{\mathrm{Pr}_B}
\newcommand{\prbv}{\mathrm{Pr}_B}
\newcommand{\deltap}{\delta}
\newcommand{\deltas}{\sigma(\delta)}
\newcommand{\KScorrespondence }{\mathrm{KSc}}
\newcommand{\KSmap}{\mathrm{KS}}
\newcommand{\ArtCat}{\textbf{Art}}
\newcommand{\PhiA}{\Phi_A}
\newcommand{\PhiB}{\Phi_B}
\newcommand{\basic}{\mathfrak{c}}
\newcommand{\adLb}{{\mathrm{L}_b  }}
\newcommand{\adLa}{{\mathrm{L}_a  }}
\newcommand{\maximalidealofartin}{\m_\v}
 \newcommand{\bigiota}{\mathcal{I}}
\newcommand{\SpecofA}{\mathbb{P}}
\newcommand{\Spec}{\operatorname{Spec}}
\newcommand{\FINISH}{\textcolor{red}{FINISH}}

\newcommand{\TMK}{T_M^{\K}}
\newcommand{\dCE}{d_{\mathrm{CE}}}

\title{Infinitesimal deformations of Lie algebroid pairs}
\author{Dadi Ni}
\address{School of Mathematics and Statistics, Henan University,  China} 
\email{\href{mailto:nidd@henu.edu.cn}{nidd@henu.edu.cn}}

\author{Zhuo Chen}
\address{Department of Mathematics, Tsinghua University,  China}
\email{\href{mailto:chenzhuo@tsinghua.edu.cn}{chenzhuo@tsinghua.edu.cn}}

\author{Chuangqiang Hu}
\address{School of Mathematics, Sun Yat-sen University,  China}
\email{\href{huchq5@mail.sysu.edu.cn}{huchq5@mail.sysu.edu.cn}}

\author{Maosong Xiang}
\address{School of Mathematics and Statistics, Center for Mathematical Sciences, Huazhong University of Science and Technology,  China}
\email{\href{mailto: msxiang@hust.edu.cn}{msxiang@hust.edu.cn}}
\thanks{Research partially supported by National Key R\&D Program of China [2022YFA1006200],  the NSFC grants 12071241, 12441107, and National Natural Science Foundation of Henan Province grant 252300421766.}

\begin{abstract}
We study infinitesimal deformations of Lie algebroid pairs in the category of smooth manifolds enriched with a local Artinian $\K$-algebra.
Given a Lie algebroid pair $(L,A)$, i.e. a Lie algebroid $L$ together with a Lie subalgebroid $A$, we investigate  isomorphism classes of infinitesimal deformations of $(L,A)$ modulo automorphisms from exponentials of derivations of $L$ and those from the exponentials of inner derivations of $L$, respectively.
For the associated two deformation functors, we find the associated governing $L_\infty$-algebras in the sense of extended deformation theory.
 Furthermore, when $(L,A)$ is a matched Lie pair, i.e. the quotient $L/A$ is also a Lie subalgebroid of $L$, we investigate isomorphism classes of infinitesimal deformations modulo automorphisms from exponentials of derivations along the normal direction $L/A$. The extended deformation theory of the associated deformation functor recovers the formal deformation theory of complex structures and that of transversely holomorphic foliations.
\end{abstract}
\maketitle

	{\it Keywords:}  Infinitesimal deformation, Lie algebroid pair, $L_\infty$-algebra, Maurer-Cartan element, Gauge equivalence class.

	{\it AMS subject classification: 13D10 , 17B70, 16E45, 53C05, 53C12}

\tableofcontents
\parskip = 0em 

 \section*{Introduction}

\noindent\textbf{The motivation.}

The ``Deligne principle" in deformation theory posits a fundamental correspondence: every deformation problem can be represented by a deformation functor. This functor, in turn, is governed by a differential graded (dg) Lie algebra (or, more generally, an $L_\infty$-algebra). Specifically, the dg Lie algebra controls the deformations via solutions of the Maurer-Cartan equation  modulo gauge actions \cite{GM}. This approach is now widely adopted and often referred to as formal or extended deformation theory  \cite{ManettiA}.

 This paper aims to initiate the study of an extended deformation theory for \textbf{Lie algebroid pairs}, which we refer to as Lie pairs for brevity. To contextualize this, recall that a Lie algebroid over $\K$ (where $\K$ represents either the field of real numbers, $\R$, or complex numbers, $\C$) is a $\K$-vector bundle $L$ over a base manifold $M$. This bundle is endowed with a Lie bracket $[-,-]$ defined on its sections, along with a bundle map $\rho \colon L \to T_M \otimes_\R \K =: \TMK$, called the anchor. The anchor map $\rho$ must satisfy two conditions: it acts as a morphism of Lie algebras on the section spaces, and it adheres to the Leibniz identity:
 \[
 [X,fY] = f[X,Y] + (\rho(X)f) Y,
 \]
 for all $X,Y \in \Gamma(L)$ and $f \in C^\infty(M,\K)$.
 A \textit{Lie (algebroid) pair} is then defined as an inclusion $A\hookrightarrow L$ of Lie algebroids sharing a common base space, denoted by $(L,A)$. These Lie pairs arise naturally in diverse mathematical domains, including Lie theory, complex geometry, foliation theory, and Poisson geometry. For instance, a complex manifold $X$ gives rise to the Lie pair $(T_X \otimes \C, T_X^{0,1})$ over $\C$. Similarly, a regular foliation $\mathcal{F}$ on $M$ defines a Lie pair $(T_M, F)$ over $\R$, where $F \subset T_M$ represents the integrable distribution tangent to the foliation $\mathcal{F}$.  The Molino class of a foliation $\mathcal{F} \subset M$ and the Atiyah class of a complex manifold $X$ can be interpreted as the Atiyah classes of their corresponding Lie pairs~\cite{CMP16}. Furthermore, the Atiyah class of any Lie pair $(L,A)$ induces an $L_\infty$-algebra structure on the shifted tangent complex $\Gamma(\Lambda^\bullet A^\vee \otimes L/A[1])$~\cite{Camille-S-X-2021}. In the specific case where the Lie pair originates from a compact K\"{a}hler manifold, this induced $L_\infty$-algebra structure recovers the fundamental construction in Kapranov's formulation of Rozansky-Witten theory~\cite{Kap}.

 Inspired by the success of formal deformation theory for complex structures~\cite{Manetti} and regular foliations~\cite{Heitsch} in differential geometry, we initiate a study of infinitesimal deformations of Lie pairs. Specifically, our aim is to classify infinitesimal deformations of a Lie pair $(L, A)$ up to a suitably defined notion of isomorphism. We anticipate that this classification is equivalent to identifying gauge relations between Maurer-Cartan elements within certain $L_\infty$-algebras.

 ~\\
\noindent\textbf{The main results.}

We outline the contents and main results of this paper. First in Section \ref{Sec:Aringedobject}, we introduce the concept of infinitesimal thickenings of Lie algebroids, a tool specifically designed to describe infinitesimal deformations of Lie pairs.   The notion of an $\v$-ringed manifold $M_{\v}$ refers to a smooth manifold $M$ enriched with a local Artinian $\K$-algebra $\v$. More precisely, it is a locally ringed space over $M$ whose structure sheaf is the sheaf of smooth functions thickened by $\v$ (see Definition~\ref{Def: ringed manifolds}).

Within the category of $\v$-ringed manifolds, we identify vector bundle objects, which we call $\v$-ringed vector bundles (see Definition~\ref{Def: ringed vector bundles}). An $\v$-ringed vector bundle can be interpreted as an infinitesimal thickening of a conventional vector bundle, achieved through the application of the local Artinian $\K$-algebra $\v$. Analogously, we define $\v$-ringed Lie algebroids (see Definition~\ref{Def: ringed Lie algebroids}) as Lie algebroid objects within the category of $\v$-ringed manifolds, representing infinitesimal thickenings of standard Lie algebroids. Given an $\v$-ringed Lie algebroid $L_\v$, evaluating it at the unique maximal ideal of $\v$ yields a Lie algebroid $L$, which we designate as the center Lie algebroid of $L_\v$.

Second, in Section \ref{sec:infinitesimaldeform},  we define an infinitesimal deformation of a Lie pair $(L, A)$ as a Lie pair $(L^0_\v, A_\v)$ within the category of $\v$-ringed manifolds, centered around the original Lie pair $(L, A)$. Here, $L^0_\v$ denotes the $\v$-linear extension of the Lie algebroid $L$, specifically what we term the $\v$-Cartesian extension of $L$ (refer to Definition~\ref{Def:infdeformation}). The fundamental concept underlying this definition is that the  Lie subalgebroid $A_\v$ is subject to variation, parameterized by $\v$, while the extended Lie algebroid $L^0_\v$ remains stable with respect to $\v$.

 We also need to define isomorphisms between infinitesimal deformations. To   this end, we introduce a specific type of automorphism of $\v$-ringed Lie algebroids   called a ``small automorphism." It is characterized by inducing the identity map on the associated center Lie algebroid, obtained via evaluation at the maximal ideal of $\v$. An interesting fact is that any small automorphism of the $\v$-Cartesian extension $L_\v^0$ can be represented as the exponential $\exp(\delta)$ of a nilpotent derivation $\delta$ of $L_\v^0$ (see Proposition~\ref{prop-aut}). Consequently, the small automorphism group $\saut(L_\v^0)$ possesses a subgroup, denoted $\operatorname{sIAut}(L_\v^0)$, comprising exponential elements derived from nilpotent inner derivations of $L_\v^0$. Using these concepts, we define weak and semistrict isomorphisms. Two infinitesimal deformations, $(L^0_\v, A_\v)$ and $(L^0_\v, A^\prime_\v)$, of $(L,A)$ are defined as weak (respectively, semistrict) isomorphic if there exists a small morphism $\Pi_A \colon A^\prime_\v \to A_\v$ of $\v$-ringed Lie algebroids and a small automorphism $\exp(\delta)$, belonging to $\saut(L_\v^0)$ (respectively, $\operatorname{sIAut}(L_\v^0)$), that relates the two deformations in a proper manner. For a rigorous definition, refer to Definition~\ref{Def:full-and-semi-iso}.

  Given a Lie pair $(L, A)$, assigning its weak or semistrict isomorphic infinitesimal deformations to each local Artinian $\K$-algebra determines two infinitesimal deformation functors, denoted by $\infDefFctLA$ and $\sinfDefFctLA$, respectively, from the category of local Artinian $\K$-algebras to the category of sets. Consequently, it is pertinent to inquire about the $L_\infty$-algebras, denoted as $\h$ and $\h_0$, that govern these deformation functors. Specifically, we seek $\h$ and $\h_0$ such that the associated algebraic deformation functors ${\rm Def}_{\h}$ and ${\rm Def}_{\h_0}$ are isomorphic to $\infDefFctLA$ and $\sinfDefFctLA$, respectively. Here, ${\rm Def}_{\h}$ (resp. ${\rm Def}_{\h_0}$) maps each local Artinian $\K$-algebra $\v$ to the set of gauge equivalent classes of Maurer-Cartan elements of the nilpotent $L_\infty$-algebra $\h \otimes \maximalidealofartin$ (resp. $\h_0 \otimes \maximalidealofartin$) in the sense of Getzler~\cite{Getzler} (see also~\cite{Guan}). 

In fact, associated with a Lie pair $(L,A)$, there exists a cubic $L_\infty$-algebra
\[
\basic:= \Gamma(\Lambda^\bullet A^\vee \otimes L/A),
\]
as demonstrated in \cite{BCSX}. This $L_\infty$-algebra, which we call the basic cubic $L_\infty$-algebra of $(L,A)$, differs from the construction presented in \cite{Camille-S-X-2021}. A concrete illustration of this arises when considering the Lie pair $(T_\C X, T_X^{0,1})$ derived from a compact complex manifold $X$. In this specific case, the corresponding basic cubic $L_\infty$-algebra $\basic$ is isomorphic to the Kodaira-Spencer algebra $\Omega_X^{0,\bullet}(T^{1,0}_X)$, which governs the infinitesimal deformations of complex structures on $X$.

The significance of $\basic$ lies in the fact that equivalence classes of Maurer-Cartan elements within the cubic (and nilpotent) $L_\infty$-algebra $\basic \otimes \maximalidealofartin$ are isomorphic to the set of standard deformations of $(L,A)$, as detailed in Proposition~\ref{lem:MCstandard}. However, it is important to note that the degree $0$ component of $\basic$, specifically $\Gamma(L/A)$, is insufficient to generate the small automorphism group $\saut(L_\v^0)$ via the $L_\infty$ exponential map. Furthermore, there are numerous examples involving general Lie algebra pairs that show the algebraic deformation functor associated with $\basic$ is neither isomorphic to the infinitesimal deformation functor ${\infDefFctLA}$ nor to $\sinfDefFctLA$.

 To address this problem, we couple the Lie algebra of derivations of the Lie algebroid $L$, denoted by $\Der(L)$, with the cubic $L_\infty$-algebra $\basic$. As demonstrated in \cite{NCCH}, there exists a natural action of $\Der(L)$ on $\basic$. Consequently, the direct sum
\[
   \h:=\Der(L) \oplus \basic,
\]
inherits a cubic $L_\infty$-algebra structure that extends the canonical structure on $\basic$. We call  $\h$ the extended cubic $L_\infty$-algebra of the Lie pair $(L,A)$. Furthermore, $\h$ contains an $L_\infty$-subalgebra given by
\[
 \h_0 =\operatorname{IDer}(L) \oplus \basic,
\]
where $\operatorname{IDer}(L) \subset \Der(L)$ represents the Lie subalgebra of inner derivations of $L$. These two cubic $L_\infty$-algebras, $\h$ and $\h_0$, are central to our investigation. Our main result, detailed in Theorems~\ref{Thm:MainTheorem} and ~\ref{Thm: semistrict}, can be summarized as follows:

\begin{theorem}
 The infinitesimal deformation functor ${\infDefFctLA}$ of weak isomorphisms classes is controlled by the extended cubic $L_\infty$-algebra $\h$, while $\sinfDefFctLA$ of semistrict isomorphism classes is controlled by the $L_\infty$-subalgebra $\h_0$ of $\h$.
\end{theorem}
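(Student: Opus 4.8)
The plan is to establish the two claimed isomorphisms of functors $\infDefFctLA \cong {\rm Def}_{\h}$ and $\sinfDefFctLA \cong {\rm Def}_{\h_0}$ by a single Kodaira--Spencer-type comparison, applied twice (once for the full derivation algebra, once for the inner one), the second being a restriction of the first. Fix a local Artinian $\K$-algebra $\v$ with maximal ideal $\maximalidealofartin$. On the one hand, an infinitesimal deformation $(L_\v^0, A_\v)$ of $(L,A)$ is, by Definition~\ref{Def:infdeformation}, an honest $\v$-ringed Lie subalgebroid $A_\v \subset L_\v^0$ centered on $A \subset L$; using the splitting $L = A \oplus L/A$ (choose once and for all) one encodes $A_\v$ as a graph, i.e. as a bundle map $\phi \in \Gamma(A^\vee \otimes L/A)\otimes \maximalidealofartin$, and one checks that the condition that the graph is closed under the $\v$-linear bracket is precisely the Maurer--Cartan equation in $\basic \otimes \maximalidealofartin$ for the cubic $L_\infty$-structure of \cite{BCSX}; this is already the content of Proposition~\ref{lem:MCstandard}, which identifies standard deformations with $\MC(\basic \otimes \maximalidealofartin)$. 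So the first step is purely to recall that identification at the level of sets.

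The core of the argument is to account for the automorphisms. I would show that the group $\saut(L_\v^0)$ is, via Proposition~\ref{prop-aut}, exactly $\exp(\Der(L)\otimes \maximalidealofartin)$ — here one must verify that a derivation $\delta$ of $L_\v^0$ inducing the identity on the center $L$ is the same datum as an element of $\Der(L)\otimes\maximalidealofartin$, and that the group law under $\exp$ matches the gauge-group structure (Baker--Campbell--Hausdorff) on the degree-zero part $\Der(L)\otimes\maximalidealofartin$ of $\h\otimes\maximalidealofartin$. Next, combining the $\Gamma(L/A)\otimes\maximalidealofartin$-part of the gauge action (which moves the graph $\phi$ by exponentiating the $L_\infty$ action, reproducing the small morphisms $\Pi_A \colon A_\v' \to A_\v$ of Definition~\ref{Def:full-and-semi-iso}) with the $\Der(L)\otimes\maximalidealofartin$-part (which acts on $L_\v^0$ itself by $\exp(\delta)$, the small automorphism piece), one sees that two deformations are weakly isomorphic exactly when their associated Maurer--Cartan elements in $\basic\otimes\maximalidealofartin$ are carried to one another by the gauge action of the degree-zero part $(\Der(L)\oplus\Gamma(L/A))\otimes\maximalidealofartin = \h^0\otimes\maximalidealofartin$ of the extended cubic $L_\infty$-algebra. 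That is the definition of ${\rm Def}_{\h}(\v)$. The key compatibility to nail down is that the action of $\Der(L)$ on $\basic$ used to build $\h$ in \cite{NCCH} is exactly the linearization of ``conjugate the graph $\phi$ by $\exp(\delta)$'' — i.e. the derived bracket/semidirect-product structure on $\h = \Der(L)\oplus\basic$ reflects the genuine geometric conjugation action on graphs; this requires unwinding the cubic $L_\infty$-brackets and matching the Getzler gauge-action formula term by term.

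For $\sinfDefFctLA$ one repeats the argument but restricts the small-automorphism piece to $\operatorname{sIAut}(L_\v^0) = \exp(\IDer(L)\otimes\maximalidealofartin)$; since $\h_0 = \IDer(L)\oplus\basic$ is an $L_\infty$-subalgebra of $\h$ and its degree-zero part is $(\IDer(L)\oplus\Gamma(L/A))\otimes\maximalidealofartin$, the same bijection on Maurer--Cartan sets now descends to the smaller equivalence relation, giving $\sinfDefFctLA\cong{\rm Def}_{\h_0}$. Finally, I would check naturality in $\v$: all the constructions (graph encoding, $\exp$, gauge action) are functorial under morphisms of local Artinian $\K$-algebras, so the set-level bijections assemble into isomorphisms of functors $\ArtCat \to \mathbf{Set}$.

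I expect the main obstacle to be the automorphism bookkeeping rather than the Maurer--Cartan part: one must prove that \emph{every} weak isomorphism $(\Pi_A, \exp(\delta))$ in the sense of Definition~\ref{Def:full-and-semi-iso} decomposes, up to the equivalence it generates, into a gauge transformation by an element of $\h^0\otimes\maximalidealofartin$ and conversely, and — more delicately — that the composition of two such weak isomorphisms corresponds to the BCH product in $\h^0\otimes\maximalidealofartin$, so that ``weakly isomorphic'' is genuinely the equivalence relation generated by the gauge action and not something coarser or finer. This is where the precise form of the cross-term brackets in the cubic $L_\infty$-algebra $\h$ (the ones mixing $\Der(L)$, $\Gamma(L/A)$, and higher components of $\basic$) must be matched against the geometry; once that dictionary is in place, the rest is a routine, if lengthy, verification.
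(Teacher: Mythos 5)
Your proposal is correct and follows essentially the same route as the paper: encode deformations as graphs/Maurer--Cartan elements of $\basic\otimes\maximalidealofartin$ (the paper's Proposition~\ref{lem:MCstandard}), identify $\saut(L_\v^0)$ with $\exp(\Der(L)\otimes\maximalidealofartin)$ (Proposition~\ref{prop-aut}), and then match the Getzler gauge action of the degree-zero part of $\h$ (resp.\ $\h_0$) against the geometric action of small (resp.\ inner) automorphisms on standard deformations. The ``routine, if lengthy, verification'' you defer --- that conjugating the graph $I_\xi$ by $\exp(\delta)$ reproduces the gauge series $e^\delta\ast\xi$ term by term --- is exactly the paper's key inductive identity (Lemma~\ref{Lemma in appendix}, proved in the Appendix), which is the technical heart of Proposition~\ref{Lem:gaugeEquivalent} and hence of the theorem.
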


To illustrate the application of this theorem, let us consider deformations of a Lie algebra pair $\a \subset \mathfrak{l}$. In this specific case, the base manifold $M$ reduces to a single point. Consequently, the tangent space of the functor $\operatorname{sDef}_{(\mathfrak{l},\a)}$ is isomorphic to the deformation space of the Lie subalgebra $\a$ within $\mathfrak{l}$, as defined by Crainic, Schätz, and Struchiner in \cite{Crainic2014}.

 In the third part of this paper, Section \ref{Sec:thirdpartmatchedpair}, we investigate the specific scenario where $L = A \bowtie (L/A)$ constitutes a matched Lie pair, implying that $L/A$ can be embedded into $L$ as a Lie subalgebroid. Consequently, the set $\{\adLb \mid b \in \Gamma(L/A)\}$ of inner derivations along the ``normal direction'' $L/A$ forms a Lie subalgebra within the inner derivations of $L$. The exponential of these derivations, defined as
\[
  \operatorname{hAut}(L_\v^0) = \{\exp(\adLb) \mid b \in \Gamma(L/A) \otimes \maximalidealofartin\},
\]
forms a subgroup of the small automorphism group $\saut(L_\v^0)$. In this context, we employ $\operatorname{hAut}(L_\v^0)$ to define isomorphism classes of infinitesimal deformations, as detailed in Definition \ref{Def:half-iso}. The functor corresponding to this relation is denoted by $ \operatorname{hDef}_{A \bowtie B}$. Furthermore, the basic cubic $L_\infty$-algebra $\basic$ associated with the matched Lie pair simplifies to a dg Lie algebra. This dg Lie algebra governs the aforementioned isomorphism classes of infinitesimal deformations, as formalized by the following statement (see Theorem~\ref{Thm: matched pair} for more details):

\begin{theorem}
For a matched Lie pair $L = A \bowtie (L/A)$, the infinitesimal deformation functor $ \operatorname{hDef}_{A \bowtie B}$ is isomorphic to the algebraic deformation functor associated with the dg Lie algebra $\basic$.
\end{theorem}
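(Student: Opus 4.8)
\emph{Proof strategy.} The plan is to build the isomorphism of functors through the Kodaira--Spencer mechanism used in the proofs of Theorems~\ref{Thm:MainTheorem} and~\ref{Thm: semistrict}, while exploiting the matched-pair hypothesis to avoid coupling $\basic$ with derivations of $L$. Fix a local Artinian $\K$-algebra $\v$ with maximal ideal $\m_\v$. Since $L=A\bowtie(L/A)$ splits $L=A\oplus(L/A)$ as vector bundles, at the level of sections $L_\v^0=\big(\Gamma(A)\oplus\Gamma(L/A)\big)\otimes_\K\v$. First I would check that every infinitesimal deformation $(L_\v^0,A_\v)$ of $(L,A)$ is, as a $\v$-ringed Lie subalgebroid, the graph $\operatorname{Gr}(\phi)=\{\,a+\phi(a)\,\}$ of a unique bundle map $\phi=\phi_{A_\v}\in\basic^1\otimes\m_\v=\Gamma(A^\vee\otimes L/A)\otimes\m_\v$: the composite $A_\v\hookrightarrow L_\v^0\twoheadrightarrow\Gamma(A)\otimes_\K\v$ reduces to the identity modulo $\m_\v$, hence is an isomorphism by Nakayama, and the requirement that the deformation be centered at $(L,A)$ forces $\phi\equiv0\bmod\m_\v$. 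Decomposing the bracket of $L_\v^0$ along the matched pair, the condition that $\operatorname{Gr}(\phi)$ be a Lie subalgebroid turns into exactly the Maurer--Cartan equation $\dA\phi+\tfrac12[\phi,\phi]=0$ in $\basic\otimes\m_\v$ (this refines, in the matched-pair setting, Proposition~\ref{lem:MCstandard}), and simultaneously confirms that the cubic bracket of $\basic$ vanishes, so $\basic$ is a genuine dg Lie algebra. Sending $A_\v$ to $\phi_{A_\v}$ then gives a natural-in-$\v$ Kodaira--Spencer map $\KSmap$ from deformations of $(L,A)$ to $\MC(\basic\otimes\m_\v)$ that is bijective before we pass to equivalence classes.

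The heart of the argument is to prove that $\KSmap$ takes the $\operatorname{hDef}$-isomorphism relation exactly to gauge equivalence. By Definition~\ref{Def:half-iso}, $\operatorname{Gr}(\phi)$ and $\operatorname{Gr}(\phi')$ are $\operatorname{hDef}$-isomorphic precisely when some $b\in\Gamma(L/A)\otimes\m_\v$ satisfies $\exp(\adLb)\big(\operatorname{Gr}(\phi')\big)=\operatorname{Gr}(\phi)$, the accompanying small morphism $\Pi_A\colon\operatorname{Gr}(\phi')\to\operatorname{Gr}(\phi)$ being then forced to be the restriction of $\exp(\adLb)$ (a small morphism between deformations is automatically an isomorphism by Nakayama, and intertwining the inclusions into $L_\v^0$ pins it down). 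I would expand $\exp(t\,\adLb)=\exp(t[b,-])$ on $\Gamma(L)\otimes_\K\v$, decompose $[b,\,a+\phi'_t(a)]$ into its $A$- and $(L/A)$-components using the matched-pair structure, and recognize that $\exp(t\,\adLb)$ carries $\operatorname{Gr}(\phi')$ to $\operatorname{Gr}(\phi'_t)$, where $\phi'_t$ solves the Getzler gauge flow $\tfrac{d}{dt}\phi'_t=\dA b+[b,\phi'_t]$ (up to the sign convention of~\cite{Getzler}) with $\phi'_0=\phi'$; evaluating at $t=1$, the relation $\exp(\adLb)(\operatorname{Gr}(\phi'))=\operatorname{Gr}(\phi)$ holds iff $\phi$ is the gauge transform of $\phi'$ by $b$. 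Conversely every gauge transformation arises this way, since $b\mapsto\exp(\adLb)$ intertwines the Baker--Campbell--Hausdorff product on the nilpotent Lie algebra $\Gamma(L/A)\otimes\m_\v$ with composition in $\operatorname{hAut}(L_\v^0)$ and is faithful on gauge orbits. This also explains why the controlling object is $\basic$ rather than $\h_0=\IDer(L)\oplus\basic$: $\operatorname{hAut}(L_\v^0)\subset\operatorname{sIAut}(L_\v^0)$ is exactly the exponential of the degree-zero part $\basic^0=\Gamma(L/A)$.

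Combining the two steps, $\KSmap$ descends to a bijection $\operatorname{hDef}_{A \bowtie B}(\v)\xrightarrow{\;\sim\;}{\rm Def}_\basic(\v)$, with inverse $\KScorrespondence$ sending the gauge class of an MC element $\phi$ to the $\operatorname{hDef}$-class of $\operatorname{Gr}(\phi)$; naturality in $\v$ — from functoriality of $-\otimes_\K\v$, of the graph construction, and of the gauge action — upgrades this to the asserted isomorphism of deformation functors, which is Theorem~\ref{Thm: matched pair}. I expect the main obstacle to be the computation in the second paragraph: decomposing the matched-pair bracket carefully enough to match $\exp(\adLb)$ acting on graphs with Getzler's gauge action on $\basic$ on the nose, including the sign bookkeeping and the verification that $\operatorname{hAut}(L_\v^0)$ equals — not merely contains, nor is contained in — the exponential image of $\basic^0\otimes\m_\v$. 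A secondary point is checking that the extra datum $\Pi_A$ in Definition~\ref{Def:half-iso} creates no identifications beyond those produced by $\exp(\adLb)$, which reduces to the observation above that $\Pi_A$ is uniquely determined as the restriction of $\exp(\adLb)$ once that automorphism matches the two graphs.
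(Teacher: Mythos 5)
Your proposal is correct and follows the same overall architecture as the paper's proof: identify every infinitesimal deformation with a standard realization (your ``graph'' $\operatorname{Gr}(\phi)$ is exactly the paper's $I_\xi$ from Proposition~\ref{Lem:standard}), identify standard realizations with Maurer--Cartan elements of $\basic\otimes\maximalidealofartin$ (Proposition~\ref{lem:MCstandard}, with the cubic bracket vanishing because $B$ is a subalgebroid), and then match the $\operatorname{hAut}(L_\v^0)$-orbits with gauge orbits. Where you genuinely diverge is in the last step. The paper's proof of Theorem~\ref{Thm: matched pair} is a four-line assembly: it simply restricts the already-established Proposition~\ref{Lem:gaugeEquivalent} (whose content is the closed-form recursion of Lemma~\ref{Lemma in appendix}, relating $y^k=\prbv\circ\delta^k\circ I_\xi$ to the Getzler components $e^p_\xi(\delta)$) to the case $\delta=\adLb$ with $b\in\Gamma(B)\otimes\maximalidealofartin$. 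You instead propose to re-derive that correspondence from scratch by decomposing $\exp(t\adLb)$ along the matched-pair splitting and showing that the image of a graph flows by the ODE $\tfrac{d}{dt}\phi'_t=\dA b+[b,\phi'_t]$, i.e.\ by Getzler's gauge flow. That route is more self-contained and more visibly a generalization of the classical Kodaira--Spencer picture, and it avoids invoking the full cubic machinery of $\h$; the cost is that you must redo, with careful signs, a computation the paper has already done once in greater generality, and you must additionally check that the time-one flow of your ODE coincides with the paper's series definition $e^b*\xi=\xi-\sum_{k\geq 1}\tfrac{1}{k!}e^k_\xi(b)$ of the gauge action (for a nilpotent dg Lie algebra this is standard, but it is a step the paper's route gets for free). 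Your two flagged ``obstacles'' are exactly the right ones, and your observation that the auxiliary morphism $\Pi_A$ in Definition~\ref{Def:half-iso} is forced to be $\pra\circ\exp(\adLb)\circ I_{\xi}$, hence creates no extra identifications, matches the paper's use of the action \eqref{Eq:Def of action} via Proposition~\ref{prop: isom standard deformation}.
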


 As an application, we consider the matched Lie pair $T_X^{0,1} \bowtie T_X^{1,0}$ on a complex manifold $X$. The associated infinitesimal deformation functor $\operatorname{hDef}_{T_X^{0,1} \bowtie T_X^{1,0}}$ is isomorphic to the infinitesimal deformation functor of complex structures on $X$. Furthermore, the basic dg Lie algebra $\basic$ corresponds to the Kodaira-Spencer algebra of $X$. Consequently, this recovers the established result regarding the isomorphism between the functor of infinitesimal deformations of complex structures and the algebraic deformation functor associated with the Kodaira-Spencer algebra.

We also extend our investigation to transversely holomorphic foliations $\mathcal{F}$ on  compact smooth manifolds $M$. Let $F$ denote the tangent bundle of $\mathcal{F}$, and let $B = T_M/F$ represent the normal bundle. This normal bundle $B$ possesses a natural complex structure, inducing a splitting $B^\C = B^{1,0} \oplus B^{0,1}$ of its complexified bundle $B^\C$. We then construct a matched Lie pair $(F^\C \oplus B^{0,1}) \bowtie B^{1,0}$. The deformation functor associated with this matched Lie pair is isomorphic to the deformation functor of the transversely holomorphic foliation $\mathcal{F}$ itself \cites{Spencer1,Spencer2, DK,Gomez}.

~\\
\noindent\textbf{Related works.}

Several works in the literature relate to the deformation theory presented in this paper. The study of Lie algebra deformations originates with the work of Nijenhuis and Richardson~\cites{Nijenhuis-Richardson, Richardson}.
For a more recent overview of Lie algebra pair deformations, see Crainic~\cite{Crainic2014}. Ji~\cite{jixiang} investigated the simultaneous deformations of Lie algebroids and their Lie subalgebroids, deriving an $L_\infty$-algebra via higher derived brackets. However, this approach differs significantly from the Lie pair deformation framework presented here. Specifically, our Lie pair setting $(L, A)$ considers a Lie algebroid $L$ and its Lie subalgebroid $A$ defined over the ``same'' base manifold. This choice is motivated by examples arising from foliations and complex manifolds. In contrast, Ji's framework typically involves a Lie algebroid $E$ over a smooth manifold $M$, where the base manifold of its Lie subalgebroid is a submanifold of $M$. This is motivated by examples from Poisson geometry involving coisotropic submanifolds of Poisson manifolds. In subsequent work, Ji \cite{jixiang2017JGP} further explored the relationship between deformations of Lie subalgebroids and deformations of coisotropic submanifolds (see also~\cites{Cattaneo, Oh} on deformations of coisotropic submanifolds).

~\\
\noindent\textbf{Acknowledgment.} 

We would like to thank Ping Xu for fruitful discussions.

~\\
\noindent\textbf{List of commonly used notations.}

\begin{enumerate}
\item  $\K$  ---    the field $\R$ of real numbers, or the field $\C$ of complex numbers;
  \item  $\maximalidealofartin$  ---   the maximal ideal of a local Artinian $\K$-algebra $\v$;
  \item  $M_\v=(M, \O_{M_\v})$  ---   an $\v$-ringed manifold centered on a smooth manifold $M$;
   \item $\Mf_{\v}$  --- the category  of $\v$-ringed manifolds;
  \item  $ \saut (M_\v)$  ---  the group of small automorphisms of an $\v$-ringed manifold $M_\v$;
  \item $\Gamma (\TMK )$ --- the space of $\K$-valued vector fields on $M$, i.e., $\K\otimes_\R \Gamma(T_M)$;
 \item $ T_{\v} := (\TMK )_{\v} $ --- the $\v$-ringed vector bundle with center $\TMK\to M$;
 \item $\Gamma(T_\v):=\Gamma(T_M)\otimes_{\mathbb{R}} \v$ --- the space of global sections of $ T_{\v}$;
  \item $\K_2[t]$ --- the $\K$-algebra of dual numbers ;
  \item  $(E_\v, [-,-]_{E_\v} ,\rho_{E_\v})$  ---  an  $\v$-ringed Lie algebroid;
  \item $E_\v^0$ ---  the $\v$-Cartesian extension of a Lie algebroid $E$;
  \item $\saut(E_{\v})$ --- the group of small automorphisms of  an $\v$-ringed Lie algebroid $E_\v$;
  \item $\Der(E)$ --- the space of derivations of a Lie algebroid $E$;
  \item $\operatorname{IDer}(E)$ --- the space of inner derivations of a Lie algebroid $E$;
  \item $\operatorname{sIAut}(E_\v^0)$ --- the group of small inner automorphisms of    $E_\v^0$ (see (11));
  \item  ${\rm Sd}(L,A,\v)$ --- the set of solutions $\xi \in \Gamma(A^* \otimes B) \otimes \maximalidealofartin$  to Equation~\eqref{Eqt:standarddeformation};
      \item $\MC(\g)$ --- the set of Maurer-Cartan elements of a nilpotent $L_\infty$-algebra $\g$;
  \item $\Omega_A^\bullet(B)=\Gamma(\Lambda^\bullet A^\vee \otimes B)$ ---  the basic cubic $L_\infty$-algebra arising from the Lie pair $(L,A)$;
  \item $\h = (\Der(L) \oplus \Gamma(B)) \bigoplus \left(\bigoplus_{n \geqslant 1} \Omega_A^n(B)\right)$ ---  the extended cubic $L_\infty$-algebra arising from the Lie pair $(L,A)$;
  \item $\firstDef$ --- the algebraic deformation functor associated to an $L_\infty$-algebra $\g$;
  \item $\infDefFctLA$ --- the {weak infinitesimal deformation functor} of the Lie pair $(L,A)$;
  \item$\ArtCat$ --- the category of local Artinian $\K$-algebras;
  \item \textbf{Set} --- the category  of sets;
  \item $\sinfDefFctLA$  ---  the semistrict infinitesimal deformation functor of the Lie pair $(L,A)$;
  \item $\operatorname{hDef}_{A \bowtie B}$ ---  the deformation functor of the matched Lie pair $L = A \bowtie B$;
      \item   $\H^i (\g, [-]_1)$  --- the $i$-th cohomology of an $L_\infty$-algebra $\g$ with respect to the first bracket $[-]_1$.
\end{enumerate}

\section{Lie algebroids enriched with local Artinian algebras}\label{Sec:Aringedobject}

We begin by defining smooth manifolds enriched with a local Artinian $\K$-algebra $\v$.

\subsection{Local Artinian ringed manifolds and vector bundles}
Given a smooth manifold $M$, we denote by $\mathcal{O}_{M}$ the sheaf of $\K$-valued smooth functions on $M$ and by $C^\infty(M,\K)$ the $\K$-algebra of smooth functions on $M$.

\begin{Def}\label{Def: ringed manifolds}
Let $M$ be a smooth manifold, and $\v$ a local Artinian $\K$-algebra. Define a sheaf $\O_{M_\v}$ of $\v$-algebras over $M$  by assigning to every open subset $U \subset M$  the $\v$-algebra $$ \mathcal{O}_{M_\v} (U) := C^\infty(U,\K) {\otimes}_{\K} \v . $$
	We call $M_\v:=(M, \O_{M_\v})$ the $\v$-ringed manifold (centered on $M$).
The space of global sections of   $\O_{M_\v}$ is denoted by  $C^\infty(M_\v) (= \CinfMK \otimes_{\K} \v)$; its  elements are called smooth functions on $M_{\v}$.
\end{Def}

Let $\mathcal{E}$ be a sheaf of $\K$-modules over $M$.
The \emph{evaluation map} $ \ev \colon \v \to \v / \maximalidealofartin = \K$ at the maximal ideal $\maximalidealofartin$ of $\v$ induces a map $\mathcal{E}\otimes_\K \v \to \mathcal{E}$, also denoted by $\ev$, sending $\xi \otimes a $ to $ \xi \cdot \ev(a)$ for all local sections $\xi$ and all $a \in \v$.
In particular, when $\mathcal{E}$ is the structural sheaf $\O_{M_\v}$, the evaluation map induces an embedding from the $\K$-ringed manifold  $(M, \O_M)$  to the $\v$-ringed manifold $ (M, \O_{M_{\v}})$.

\begin{Def}\label{Def:morphism}
A morphism of $\v$-ringed manifolds $M_{\v} \to N_{\v}$ consists of a pair $(\varphi, \lambda^{\sharp})$ where
\begin{enumerate}
\item $\varphi \colon M \to N$ is a smooth map of smooth manifolds;
\item $\lambda^\sharp \colon \O_{N_{\v}} \to   \varphi_{*} \O_{M_\v}$ is a morphism of sheaves of $\v$-algebras over $ N $ extending the pullback map $\varphi^* \colon \O_{N} \to   \varphi_{*} \O_{M}$ in the following sense --- for each local section $  f \otimes a  $ of $  \O_{N_{\v}}(U)=$ $C^\infty(U,\K){\otimes}_{\K} \v$, where $U\subset N$ is open, one has
\[
  \ev \circ \lambda^{\sharp}( f \otimes a )  = \ev \circ (\lambda^\sharp(f) \otimes a )=\varphi^*(f) \cdot \ev(a).
\]
\end{enumerate}
\end{Def}

 Since the restriction map to each stalk at $x \in M$, $\lambda_x^{\sharp} \colon \O_{N_{\v}, \varphi(x)} \to \O_{M_{\v}, x}$ is local, the morphism $(\varphi, \lambda^{\sharp})$  of $\v$-ringed manifolds is indeed a morphism of locally ringed spaces.
 Meanwhile, the morphism $ \lambda^\sharp \colon  \O_{N_{\v}} \to \varphi_*\O_{M_\v}$ of sheaves is completely determined by the corresponding morphism on the level of global sections,
\[
\lambda^* \colon C^\infty(N_\v)  \to C^\infty(M_\v).
\]
So we can regard a morphism of $\v$-ringed manifolds from $M_{\v}$ to $N_{\v}$ as a morphism of $\v$-algebras $\lambda^*\colon C^\infty(N_\v)  \to C^\infty(M_\v)$ covering a smooth map $\varphi \colon M \to N$ in the sense that
\begin{equation*}
 \ev\circ \lambda^* = \varphi^\ast \colon C^\infty(N,\K)  \to  C^\infty(M,\K).
\end{equation*}
In this situation, we  say that $\lambda^*$ is a \textbf{lifting} of $\varphi^*$ and that $\varphi^*$ is the \textbf{center} of $\lambda^*$.

It is clear that the collection of $\v$-ringed manifolds and their morphisms forms a category, denoted by $\Mf_{\v}$.
We need a particular type of automorphisms of $\v$-ringed manifolds.
 \begin{Def}
 	A  \textbf{small automorphism} of an $\v$-ringed manifold $M_\v$ is a lifting of the identity map $\id$ on $\CinfMK$, i.e., a morphism  $ \lambda^* \colon C^\infty(M_{\v}) \to C^\infty(M_\v)$ satisfying $\ev\circ \lambda^*=\id$.
\end{Def}
We denote by $ \saut (M_\v)$  the group of small automorphisms of $M_\v$.
\begin{Ex}\label{Ex:Kvarepsilon1}
    Let $\v = \K_2[t]:= \K[t]/(t^{2})(\cong \K\oplus \K t)$ be the algebra of dual numbers. Then for any $X   \in \Gamma (\TMK ) :=  \K\otimes_\R \Gamma(T_M) $, the exponential map
    \[
      \exp {(X\otimes t)}=\id + X \otimes t\colon \CinfMK \otimes_{\K} \K_2[t] \to \CinfMK \otimes_{\K} \K_2[t],
    \]
    which sends $f  \in \CinfMK$ to $ f   +  X(f)\otimes t $, is an element in $ \saut (M_{\K_2[t]})$.
    It is clear that the inverse of $ \exp(X\otimes t)$ is $ \exp (-X\otimes t)$.
\end{Ex}
\begin{Rem}
In fact, any small automorphism of an $\v$-ringed manifold $M_\v$ is of the form $\exp (D)$ for some $D\in \Gamma (\TMK) \otimes_\K \maximalidealofartin $. Here $\exp$ denotes the \textit{exponential map}:
 \[
 \exp \colon  \Gamma (\TMK) \otimes_\K \maximalidealofartin \to \saut (M_\v),\quad D \mapsto \exp(D) ,
 \]
 where $\exp (D)$ is defined by
\[
  \exp (D)(f) := \sum_{n\geq 0} \frac{1}{n!} \langle D^n, f  \rangle,
\]
for all $f \in C^{\infty}(M_\v)$.  The bracket $\langle  D^n, f\rangle$ means the $\v$-linear action of the  differential operator $D^n$ on $f\in C^\infty(M_\v)$. The above sum is indeed finite as $D \in \Gamma (\TMK )\otimes_\K \maximalidealofartin$ is nilpotent. (See Proposition \ref{prop-aut}.)
\end{Rem}

We now turn to vector bundle objects in the category of $\v$-ringed manifolds.
\begin{Def}\label{Def: ringed vector bundles}
An \textbf{$\v$-ringed vector bundle} over an $\v$-ringed manifold $M_{\v}$ is an $\v$-ringed manifold  $E_{\v}$ together with a morphism $E_{\v}\to M_{\v}$, which covers a smooth $\K$-vector bundle $\pi\colon E\to M$.

The underlying vector bundle $ E\to M $ is called the \textbf{center} of $ E_{\v} \to M_{\v} $.
The space of global sections of $E_{\v}$ is defined by
 \[
   \Gamma(E_\v):=\Gamma(E) \otimes_{\K} \v,
  \]
 which is a $C^{\infty}(M_\v)$-module.
 \end{Def}

\begin{Def}\label{Def:comorphismAringedvectorbundle}
A morphism of $\v$-ringed vector bundles  from $F_{\v} $ to $ E_{\v} $ (over the same base $M_\v$) is specified by an $\v$-linear map $\Pi\colon \Gamma(F_\v)\to \Gamma(E_\v)$ satisfying the following two conditions:
  	\begin{enumerate}
  \item The map $\Pi$ covers a small automorphism $\lambda^*$ of $M_\v$ in the sense that
\[
\Pi(f \nu)=\lambda^*(f)\Pi(\nu),
\]
for all $f \in C^\infty(M_\v), \nu\in \Gamma(F_\v)$.
\item The center of $\Pi$: $$\Pi_0:= \ev \circ \Pi \colon \Gamma(F) \to \Gamma(E)$$  defines a morphism of vector bundles over $M$.
  	\end{enumerate}
Such a morphism will be denoted by $(\Pi, \lambda^\ast)$.
In particular, when the small automorphism $\lambda^\ast$ of $C^\infty(M_\v)$ is the identity $\id$, we call  $(\Pi, \id )$ a \textbf{strict morphism} and denote it by $\Pi$ for simplicity.
\end{Def}
Compositions of morphisms of $\v$-ringed vector bundles are naturally defined.

\begin{Rem}
In general, morphisms between $\v$-ringed vector bundles over distinct $\v$-ringed base manifolds are more complicated than what we have discussed here.
\end{Rem}

\begin{prop}\label{prop: Isom of vector bundles}
Suppose that $(\Pi, \lambda^\ast) \colon F_\v \to E_\v$ is a morphism of $\v$-ringed vector bundles over $M_\v$. Then the map $ \Pi  \colon \Gamma(F_\v) \to \Gamma(E_\v)$  is injective  (resp. surjective) if and only if the center $\Pi_0 \colon \Gamma(F) \to \Gamma(E)$ of $\Pi$ is injective (resp. surjective). Consequently, $\Pi$ is an isomorphism of $\v$-modules if and only if its center $\Pi_0$  is an isomorphism of vector bundles.
\end{prop}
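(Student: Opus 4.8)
The plan is to reduce everything to a statement about the associated $\v$-filtration and then to ordinary linear algebra over $C^\infty(M)$. Recall that $\v$ is a finite-dimensional local Artinian $\K$-algebra, so its maximal ideal $\maximalidealofartin$ is nilpotent: there is an integer $k$ with $\maximalidealofartin^{k+1}=0$. This gives a finite decreasing filtration $\Gamma(F_\v)=\Gamma(F)\otimes_\K\v \supset \Gamma(F)\otimes_\K\maximalidealofartin \supset \Gamma(F)\otimes_\K\maximalidealofartin^2\supset\cdots\supset \Gamma(F)\otimes_\K\maximalidealofartin^{k+1}=0$, and similarly for $E_\v$. The first observation is that $\Pi$ respects this filtration: since $\Pi$ is $\v$-linear and $\lambda^*$ is a small automorphism (so $\ev\circ\lambda^*=\id$, i.e. $\lambda^*$ preserves the filtration on $C^\infty(M_\v)$ and acts as the identity on the associated graded), one checks $\Pi\big(\Gamma(F)\otimes_\K\maximalidealofartin^j\big)\subset \Gamma(E)\otimes_\K\maximalidealofartin^j$ for every $j$. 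Moreover the induced map on the $j$-th graded piece $\Gamma(F)\otimes_\K(\maximalidealofartin^j/\maximalidealofartin^{j+1}) \to \Gamma(E)\otimes_\K(\maximalidealofartin^j/\maximalidealofartin^{j+1})$ is, by condition (2) of Definition~\ref{Def:comorphismAringedvectorbundle} together with the fact that $\lambda^*$ is trivial on the graded, precisely $\Pi_0\otimes\id$, where $\Pi_0=\ev\circ\Pi$.

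The core of the argument is then the standard ``five-lemma for filtered modules'' type reasoning. For the graded pieces: each $\maximalidealofartin^j/\maximalidealofartin^{j+1}$ is a finite-dimensional $\K$-vector space, say of dimension $d_j$, so $\mathrm{gr}_j\Pi = \Pi_0^{\oplus d_j}$; hence $\mathrm{gr}_j\Pi$ is injective (resp. surjective) for all $j$ if and only if $\Pi_0$ is injective (resp. surjective). Next I would run the usual induction on the filtration length. If $\Pi_0$ is injective, then every $\mathrm{gr}_j\Pi$ is injective; a straightforward diagram chase on the short exact sequences $0\to \Gamma(F)\otimes\maximalidealofartin^{j+1}\to \Gamma(F)\otimes\maximalidealofartin^{j}\to \mathrm{gr}_j\to 0$ shows, descending from $j=k$ to $j=0$, that $\Pi$ is injective on each $\Gamma(F)\otimes\maximalidealofartin^j$ and in particular on all of $\Gamma(F_\v)$. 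Dually, if $\Pi_0$ is surjective, then every $\mathrm{gr}_j\Pi$ is surjective, and an analogous chase (now combined with the fact that $\Pi$ maps $\Gamma(F)\otimes\maximalidealofartin^{k}$ onto $\Gamma(E)\otimes\maximalidealofartin^{k}$ as the bottom step, then lifting) shows $\Pi$ is surjective. The converse directions are immediate: $\Pi_0=\ev\circ\Pi$, and $\ev$ is the quotient by the submodule $\Gamma(E)\otimes\maximalidealofartin$, so surjectivity of $\Pi$ forces surjectivity of $\Pi_0$; and if $\Pi$ is injective then restricting to the ``constant'' copy $\Gamma(F)\hookrightarrow\Gamma(F)\otimes\v$ and composing with $\ev$ recovers $\Pi_0$ up to the identification $\Gamma(F)\otimes\maximalidealofartin$ being a complement, so a short argument gives injectivity of $\Pi_0$ as well (alternatively: $\ker\Pi_0\otimes\maximalidealofartin^k$ would lie in $\ker\Pi$ if $\ker\Pi_0\neq 0$).

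For the final sentence, ``isomorphism of $\v$-modules'' follows by combining the two equivalences: $\Pi$ bijective $\iff$ $\Pi_0$ bijective $\iff$ $\Pi_0$ is an isomorphism of vector bundles (the last equivalence using that a bijective bundle morphism covering $\id_M$ with locally free kernel/cokernel considerations is an isomorphism of bundles — here one should note $\Pi_0$ is a morphism of vector bundles over $M$ by hypothesis, and a fiberwise-bijective such morphism is a bundle isomorphism). One should also remark that the inverse $\Pi^{-1}$ is automatically again a morphism of $\v$-ringed vector bundles, covering $(\lambda^*)^{-1}\in\saut(M_\v)$, so the statement is internal to the category $\Mf_\v$.

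The step I expect to be the genuine (if modest) obstacle is bookkeeping the interaction between the nontrivial small automorphism $\lambda^*$ and the tensor decomposition: one must be careful that $\Pi$ is only $\v$-linear and $\lambda^*$-semilinear over $C^\infty(M_\v)$, not $C^\infty(M_\v)$-linear, so the identification of $\mathrm{gr}_j\Pi$ with $\Pi_0\otimes\id$ requires knowing $\lambda^*$ acts trivially on $\mathrm{gr}\, C^\infty(M_\v)$, which is exactly the content of $\ev\circ\lambda^*=\id$. Everything else is the routine filtered five-lemma, which I would state compactly rather than belabor.
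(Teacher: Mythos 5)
Your proposal is correct and follows essentially the same strategy as the paper's proof: both arguments filter by powers of $\maximalidealofartin$, identify the induced map on each graded piece with $\Pi_0 \otimes \id$ (using $\v$-linearity of $\Pi$ and $\ev \circ \lambda^\ast = \id$), and conclude by induction on the filtration length, with the same observation ($\ker \Pi_0 \otimes \maximalidealofartin^k \subset \ker \Pi$) handling the easy converse direction. The only cosmetic difference is that the paper inducts on the quotients $\v/\maximalidealofartin^{k+1}$ while you work with the submodule filtration $\Gamma(F) \otimes \maximalidealofartin^j$; these are equivalent formulations of the same associated-graded argument.
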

\begin{proof}
Assume that $\m^n_\v \neq 0$ while $\m^{n+1}_\v=0$ for some $n\geqslant 1$.
We now show that $\Pi$ is injective if and only if $\Pi_0 \colon \Gamma(F) \to \Gamma(E)$ is injective.

Assume first that $\Pi$ is injective. If $\Pi_0(e)  = 0$ for some $e \in \Gamma(F)$,
 it then follows from $ \ev(\Pi(e))  = \Pi_0(e)  = 0$ that $\Pi(e)  \in  \Gamma(E)  \otimes_{\K} \maximalidealofartin$. Thus, for any nonzero element $a \in \m^n_\v$, we have $\Pi(e \otimes a) = \Pi(e) a = 0$, which implies that $e \otimes a = 0$ since $\Pi$ is injective. Hence, we have $e = 0$, and thus $\Pi_0$ is injective.

Conversely, assume that  $\Pi_0 \colon \Gamma(F) \to \Gamma(E)$ is injective.  We next show that $\Pi$ is injective as well.
Consider the following family of $\v$-linear maps induced by $\Pi$,
 \[
  \Pi_k \colon \Gamma(F_{\v/\maximalidealofartin^{k+1}})\to \Gamma(E_{\v/\maximalidealofartin^{k+1}}),
  \]
for all $0 \leqslant k \leqslant n $, where $\Pi_{n }=\Pi$.
By an induction argument on $k$, it suffices to prove that $\Pi_{k}$ is injective provided that $\Pi_{k-1} $ is injective for all $k  $.

By choosing a splitting of the following short exact sequence of finite dimensional vector spaces
\[
0 \to \maximalidealofartin^k/ \maximalidealofartin^{k+1} \to \v/\maximalidealofartin^{k+1} \to \v/\maximalidealofartin^{k} \to 0,
\]
we obtain a direct sum decomposition of $\K$-linear vector spaces $\v /\maximalidealofartin^{k+1} \cong  \v /\maximalidealofartin^k \oplus (\maximalidealofartin^k/\maximalidealofartin^{k+1})$.
Thus, we may decompose the map $\Pi_{k }$ as follows:
\[
\Pi_{k } = \Pi_{k-1} + \phi_{k },
\]
where the map $ \phi_{k }$ is the component $ \Gamma(F_{\v/\maximalidealofartin^{k+1}}) \to \Gamma(E_{\maximalidealofartin^k/ \maximalidealofartin^{k+1}}) $ of $\Pi_{k }$.
For any
\[
\sum_i s_i\otimes a_i+\sum_j t_j\otimes b_j \in \Gamma(F_{\v/\maximalidealofartin^{k}}) \oplus \Gamma(F_{\maximalidealofartin^k/\maximalidealofartin^{k+1}}) \cong   \Gamma(F_{\v/\maximalidealofartin^{k+1}}),
\]
where $s_i, t_j\in\Gamma(F)$ and $a_i\in \v /\maximalidealofartin^k,  b_j \in \maximalidealofartin^k/\maximalidealofartin^{k+1}$, we have
\begin{align*}
\Pi_{k }\left(\sum_i s_i\otimes a_i+\sum_j t_j\otimes b_j\right) &= \Pi_{k-1}\left( \sum_i s_i\otimes a_i \right) +\phi_{k }\left( \sum_j t_j\otimes b_j \right) \\
&= \Pi_{k-1}\left(\sum_is_i\otimes a_i\right) + \sum_j \Pi_0(t_j)\otimes b_j,
\end{align*}
which implies that $\Pi_{k }$ is injective.

To prove that $\Pi$ is surjective if and only if $\Pi_0$ is surjective, it suffices to adopt an analogous approach, which we omit.
\end{proof}

\begin{Ex}[Tangent bundle of an $\v$-ringed manifold]\label{Ex:tangent-bundle}
Consider the associated $\v$-ringed tangent bundle $ T_{\v} := (\TMK )_{\v} $ over the $ \v$-ringed manifold $ M_\v$.
Each small automorphism $\lambda^\ast$ of $M_\v$ induces an automorphism $(\Pi_{\lambda^\ast}, \lambda^\ast)$ of $T_\v$ by conjugation, i.e.,
\[
\Pi_{\lambda^\ast}(D) = \lambda^\ast \circ D \circ (\lambda^{\ast})^{-1},
\]
for all $D \in \Gamma(T_\v)=\Gamma(T_M)\otimes_{\mathbb{R}} \v$.

In particular, when $\v = \K_2[t]$ is the algebra of dual numbers, by Example \ref{Ex:Kvarepsilon1}, each vector field  $D_0 \in \Gamma (\TMK )$ induces a small automorphism $\exp {(D_0 \otimes t)} = \id + D_0 \otimes t$ of $M_\v$.
In this case, the associated morphism is
\begin{align}\label{Ex:Tangentbundle}
 \Pi_{\exp {(D_0 \otimes t)}}(D) &= \exp {(D_0 \otimes t)} \circ D \circ \exp {(-D_0 \otimes t)}  = (\id + D_0 \otimes t) \circ D \circ (\id - D_0 \otimes t) \notag \\
 &= D + [D_0, D]\otimes t,
\end{align}
for all $D \in \Gamma(T_\v)$.
\end{Ex}

\subsection{Local Artinian ringed Lie algebroids}
We now study Lie algebroid objects in the category of $\v$-ringed manifolds.
\begin{Def}\label{Def: ringed Lie algebroids}
An \textbf{$\v$-ringed Lie algebroid} consists of a triple $(E_\v, [-,-]_{E_\v}, \rho_{E_\v})$, where
\begin{compactenum}
  \item $E_\v$ is an $\v$-ringed vector bundle over $M_\v$;
  \item $\rho_{E_\v}$, called the anchor, is a morphism of $\v$-ringed vector bundles from $E_\v$ to the tangent bundle $T_\v$ of $M_\v$,  covering the identity $\id  \colon M_\v \to M_\v$;
  \item $[-,-]_{E_\v} \colon \Gamma(E_\v) \times \Gamma(E_\v) \to \Gamma(E_\v)$ is an $\v$-bilinear Lie bracket on the space $\Gamma (E_\v)$ of global sections, satisfying the Leibniz rule
\[
[u, f   v]_{E_\v}  = \langle \rho_{E_\v} (u), f  \rangle   v + f   [u,v]_{E_\v} ,
\]
for all $f  \in  C^\infty(M_\v)$  and $u,v\in \Gamma(E_\v)$.
\end{compactenum}
\end{Def}

 Given an $\v$-ringed Lie algebroid $(E_\v, [-,-]_{E_\v} ,\rho_{E_\v})$,  the evaluation map $\ev \colon E_\v \to E$ determines a Lie algebroid $(E, [-,-]_E ,\rho_E)$. 
Conversely, given a Lie algebroid $ (E, [-, -]_E,\rho_E) $ over a smooth manifold $M$, there exists an $\v$-ringed Lie algebroid, denote by $E^0_\v$, whose anchor $\rho_{E_\v^0}$ and bracket $[-,-]_{E_\v^0}$ are $\v$-linear extensions of the anchor $\rho_E$ and the bracket $[-,-]_E$, respectively.
We call $E_\v^0$ the $\v$-\textbf{Cartesian extension} of the Lie algebroid $E$.

\begin{Def}
Let $(E_\v, [-,-]_{E_\v}, \rho_{E_\v})$ and $(F_\v, [-,-]_{F_\v}, \rho_{F_\v})$ be two $\v$-ringed Lie algebroids over  $M_\v$.  A morphism of  from $F_\v$ to $E_\v$  is a morphism $(\Pi, \lambda^\ast) \colon F_\v \to E_\v$ of the underlying $\v$-ringed vector bundles satisfying the following two conditions:
\begin{compactenum}
\item The $\v$-linear map $\Pi \colon \Gamma(F_\v) \to \Gamma(E_\v)$ preserves the brackets, i.e.,
\[
\Pi([s_1,s_2]_{F_\v})=[\Pi(s_1),\Pi(s_2)]_{E_\v},
\]
for all $s_1, s_2\in\Gamma(F_\v)$.
\item The pair $(\Pi,\lambda^\ast)$ is compatible with the two anchors $\rho_{E_\v}$ and $\rho_{F_\v}$ in the following sense:
\begin{equation}\label{Eq: comor of LA}
\lambda^\ast \langle \rho_{F_\v} (s), f\rangle=\langle \rho_{E_\v}(\Pi(s)), \lambda^\ast f\rangle,
\end{equation}
for all $f \in C^{\infty}(M_{\v})$ and $s \in\Gamma(F_\v)$.
\end{compactenum}
\end{Def}
It is straightforward to verify that the center $\Pi_0 \colon F \to E$ of a morphism $\Pi$ of $\v$-ringed Lie algebroids is itself a morphism of Lie algebroids over $M$.
In particular, when $\lambda^\ast$ is the identity $\id$ of $C^\infty(M_\v)$, the morphism $(\Pi, \id)$ will be denoted by $\Pi$ for simplicity and will be referred to as a \textbf{strict} morphism.

\begin{Rem}
Given two Lie algebroids over different base manifolds, Liu and Chen have studied various characterizations of morphisms and comorphisms between them in~\cite{JA2007}. The  definition of morphisms and comorphisms of ordinary Lie algebroids can be generalized to the setting of $\v$-ringed Lie algebroids. In fact, our definition of morphisms of $\v$-ringed Lie algebroids over the same $\v$-ringed manifold can be viewed as a special case of \emph{comorphisms} of $\v$-ringed Lie algebroids in the spirit of~\cite{JA2007}.
\end{Rem}

In what follows, we focus on a special kind of automorphisms of an $\v$-ringed Lie algebroid.
\begin{Def}\label{Def:smallcoautoALiealgebroid}
 A \textbf{small automorphism} of an $\v$-ringed Lie algebroid $(E_\v, [-,-]_{E_\v}, \rho_{E_\v})$ over $M_\v$ is a morphism $(\Pi, \lambda^\ast)$ of $\v$-ringed Lie algebroids from $E_\v$ to itself whose center $\Pi_0$ is the identity of the center  Lie algebroid $E$ over $M$.

 Denote by $\saut(E_{\v})$ the group of small automorphisms of  the $\v$-ringed Lie algebroid $E_\v$.
 \end{Def}

For any $\v$-Cartesian extension $E^0_\v$ of a Lie algebroid $E$,
we now establish that every element of $\saut(E^0_{\v})$ can be expressed as the exponential of a nilpotent derivation of $E^0_\v$.

\begin{Def}\label{Def:Liealgebroiddiff}
   A derivation of the Lie algebroid $(E, [-, -]_E, \rho_E)$ over $M$ is a linear operator $\deltap \colon\Gamma(E) \to \Gamma(E)$ equipped with a vector field $\deltas \in \Gamma(T^\K_M)$, called the symbol of $\deltap$, satisfying
\begin{align*}
\deltap (fu)&=\deltas (f)  u+f \deltap(u),\\
[\deltas,  \rho_E(u)]&=\rho_E(\deltap (u)),\\
\deltap [u,v]_E &=[\deltap (u),v]_E +[u,\deltap  (v)]_E ,
\end{align*}
for all $f \in C^\infty(M,\K), u,v \in \Gamma(E)$.
\end{Def}
For a Lie algebroid $E$, the space $\Der(E)$ of derivations together with the standard commutator is a Lie algebra. The subspace $\operatorname{IDer}(E):=\{\operatorname{ad}_u| u\in\Gamma(E) \}$ of inner derivations is a Lie subalgebra of $\Der(E)$.

Note that the $\v$-linear extension of derivations of a Lie algebroid $E$ yields derivations of the $\v$-Cartesian extension $E^0_\v$ of $E$.  In particular, for any element $\delta \in \Der (E) \otimes_{\K} \maximalidealofartin$, the operator $\delta$ is a nilpotent derivation of $E^0_\v$. Its exponential
\[
\exp(\delta):= \id + \delta + \frac{\delta^2}{2!} + \frac{\delta^3}{3!} + \cdots
\]
defines a small automorphism of $E_\v^0$.
Indeed, every small automorphism of $E_\v^0$ arises in this manner.
To illustrate this fact, consider the special case where $\v = \K_2[t]$ is the algebra of dual numbers over $\K$.
 \begin{Ex}\label{Ex:automorphismLDelta}
Consider the $\K_2[t]$-Cartesian extension $E^0_{\K_2[t]}$ of a Lie algebroid $(E, [\cdot,\cdot]_E, \rho_E)$. Suppose that $\Pi \in \saut(E_{\K_2[t]}) $ is a small automorphism covering a small automorphism $\exp(D\otimes t) = \id +  D\otimes t$ of $M_\v$ for some $D \in \Gamma(\TMK )$ (see Example \ref{Ex:Kvarepsilon1}). Then
\[
 \Pi =\id + \Pi_1\otimes t \colon \Gamma(E)\to \Gamma(E)\otimes_{\K} \K_2[t],
\]
where  $\Pi_1\colon \Gamma(E)\to \Gamma(E)$ is $\K$-linear, satisfying
\[
\Pi_1(fe)=D(f) e +f\Pi_1(e).
\]
Since $\Pi([u,v]_E)=[\Pi(u),\Pi(v)]_E$ for all $u, v \in \Gamma(E)$,  it follows that
\[
	 \Pi_1[u,v]_E=[\Pi_1(u), v ]_E + [u,\Pi_1(v) ]_E.
\]
By Equations~\eqref{Ex:Tangentbundle} and~\eqref{Eq: comor of LA},  one obtains
\[
	\rho_E(\Pi_1(e)) =  [D, \rho_E(e)],
\]
and $\Pi_1 $ is  a derivation of $E$ with symbol $D$. Hence, we have $\Pi=\exp(\Pi_1\otimes t)$.	
\end{Ex}

\begin{prop}\label{prop-aut}
Any small automorphism of the $\v$-Cartesian extension $E^0_\v$  can be uniquely expressed as $\exp(\delta)$ for some   $\delta \in \Der (E) \otimes_{\K} \maximalidealofartin$ of $E^0_\v$. Moreover, $\exp(\delta)$ covers the small automorphism $\exp (\deltas)$ of the $\v$-ringed manifold $M_\v$ generated by the symbol $\deltas\in \Gamma(T_\v)$  of $\delta$.
\end{prop}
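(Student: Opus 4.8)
The plan is to prove the statement in two stages: first existence of a derivation $\delta$ with $\exp(\delta) = \Pi$, then uniqueness, and along the way extract the claim about the covered automorphism of $M_\v$. Throughout I will filter by powers of the maximal ideal $\maximalidealofartin$, taking $n \geqslant 1$ with $\maximalidealofartin^{n+1} = 0$, and argue by induction on $n$. The base case $\v = \K_2[t]$ is exactly Example~\ref{Ex:automorphismLDelta}, which produces the derivation directly.

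For the inductive step, suppose $(\Pi, \lambda^\ast) \in \saut(E^0_\v)$. First I would analyze the induced small automorphism $\lambda^\ast$ of $C^\infty(M_\v)$: by the Remark after Definition~\ref{Def:morphism} (or by running the same filtration argument on functions), $\lambda^\ast = \exp(D)$ for a unique $D \in \Gamma(\TMK) \otimes_\K \maximalidealofartin$, and one checks $D$ is nilpotent so the exponential is finite. The candidate symbol is $\deltas := D$. Next, reduce modulo $\maximalidealofartin^n$: the induced map $\Pi_{n-1} \in \saut(E^0_{\v/\maximalidealofartin^n})$ is, by induction, $\exp(\delta')$ for a unique $\delta' \in \Der(E) \otimes_\K (\maximalidealofartin/\maximalidealofartin^n)$ covering $\exp(D \bmod \maximalidealofartin^n)$. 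Lift $\delta'$ arbitrarily to some $\tilde\delta \in \Der(E)\otimes_\K\maximalidealofartin$ (a lift exists because $\Der(E)$ is a vector space and $\maximalidealofartin \twoheadrightarrow \maximalidealofartin/\maximalidealofartin^n$ splits $\K$-linearly), and with symbol matching $D$ modulo $\maximalidealofartin^n$; then replace it so that its symbol is exactly $D$ (possible since $\rho_E\colon\Gamma(E)\to\Gamma(\TMK)$ and the symbol assignment are $\K$-linear). Now $\Pi \circ \exp(-\tilde\delta)$ is a small automorphism of $E^0_\v$ that is the identity modulo $\maximalidealofartin^n$ and covers $\id$ on $M_\v$; write it as $\id + \psi$ where $\psi$ takes values in $\Gamma(E) \otimes_\K \maximalidealofartin^n$. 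Because $\maximalidealofartin^{n+1} = 0$, the conditions that $\Pi\circ\exp(-\tilde\delta)$ is a strict Lie algebroid morphism force $\psi$ to be a $\K$-linear operator $\Gamma(E)\to\Gamma(E)\otimes_\K\maximalidealofartin^n$ satisfying $\psi(fe) = f\psi(e)$ (its symbol vanishes, since the covered automorphism is $\id$) and $\psi[u,v]_E = [\psi u, v]_E + [u, \psi v]_E$ — that is, $\psi \in \Der(E)\otimes_\K \maximalidealofartin^n$ with zero symbol — and moreover $\exp(\psi) = \id + \psi$ since $\psi^2 = 0$. Then $\delta := \tilde\delta \ast \psi$ combined via Baker--Campbell--Hausdorff (a finite sum, and in fact $\delta = \tilde\delta + \psi + \tfrac12[\tilde\delta,\psi] + \cdots$ with all higher terms landing in $\maximalidealofartin^{n+1} = 0$ except possibly one bracket term which also lands there) gives $\exp(\delta) = \exp(\tilde\delta)\exp(\psi) = \exp(\tilde\delta)(\id+\psi) = \Pi$, with $\delta \in \Der(E)\otimes_\K\maximalidealofartin$ and symbol $D = \deltas$.

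For uniqueness: if $\exp(\delta_1) = \exp(\delta_2)$ with $\delta_i \in \Der(E)\otimes_\K\maximalidealofartin$, then comparing symbols gives $\deltas_1 = \deltas_2$ (the symbol of $\exp(\delta)$ on $M_\v$ determines $\deltas$ by the dual-numbers computation applied after reduction), and then reducing modulo successive powers of $\maximalidealofartin$ and using that $\log = \exp^{-1}$ is well-defined on the pronilpotent group $\id + \Der(E)\otimes_\K\maximalidealofartin$ (with $\log(\exp\delta) = \delta$ by the usual formal identity, all sums finite) forces $\delta_1 = \delta_2$; alternatively, subtract and induct on the filtration, where at each stage the leading term of $\delta_1 - \delta_2$ in $\maximalidealofartin^k/\maximalidealofartin^{k+1}$ must vanish.

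I expect the main obstacle to be the inductive step where one must verify that the ``error term'' $\psi = \Pi\circ\exp(-\tilde\delta) - \id$ is genuinely a derivation (and not merely a $\K$-linear map), i.e. carefully extracting the Leibniz rule, anchor-compatibility, and bracket-compatibility for $\psi$ from the corresponding identities for the composite morphism, using $\maximalidealofartin^{n+1}=0$ to linearize; and relatedly, making the symbol bookkeeping consistent so that $\tilde\delta$ can be chosen with symbol exactly $D$. The Baker--Campbell--Hausdorff manipulation is routine once one observes all correction terms vanish by nilpotency, so it is not the crux.
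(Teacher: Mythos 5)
Your proposal follows essentially the same route as the paper: an induction over the Artinian algebra with the dual-numbers computation of Example~\ref{Ex:automorphismLDelta} as the base case, a lift of the derivation obtained at the previous stage, and the observation that the residual automorphism $\Pi\circ\exp(-\tilde\delta)=\id+\psi$ has $\psi$ a square-zero derivation, so the exponentials combine by nilpotency; the only structural difference is that the paper inducts on $\dim_\K\v$ and peels off a one-dimensional ideal $\K t$ with $t\m_\v=0$, whereas you induct on the nilpotency degree and peel off $\m_\v^n$, which is immaterial. Two points need repair. First, with your choice of induction parameter the base case $\m_\v^2=0$ is not literally $\v=\K_2[t]$ when $\dim_\K\m_\v>1$, so Example~\ref{Ex:automorphismLDelta} must be rerun componentwise in a basis of $\m_\v$; the same issue recurs for the error term $\id+\psi$, which lives over $\K\oplus\m_\v^n$ rather than over the dual numbers (the paper avoids this by arranging the quotient so that the error genuinely lands in $\K\oplus\K t\cong\K_2[t]$). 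Second, your adjustment of $\tilde\delta$ so that its symbol is exactly $D$ is not justified by $\K$-linearity alone: the symbol map $\Der(E)\to\Gamma(\TMK)$ need not be surjective (e.g.\ for a foliation Lie algebroid only projectable vector fields arise as symbols), so the required correction term may not exist in $\Der(E)\otimes_\K\m_\v^n$. Fortunately this step is unnecessary — without it $\psi$ is still an element of $\Der(E)\otimes_\K\m_\v^n$, merely with a possibly nonzero residual symbol, and the Leibniz, anchor, and bracket identities for $\psi$, the identity $\exp(\psi)=\id+\psi$, and the combination $\exp(\tilde\delta)\exp(\psi)=\exp(\tilde\delta+\psi)$ all go through unchanged. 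With these two adjustments your argument is correct and matches the paper's.
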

\begin{proof}
We prove by induction on the dimension $\dim_\K \v := n \geqslant 2$ of the local Artinian $\K$-algebra $\v$.
When $n=2$,  we have $\v \cong \K_2[t]$. By Example \ref{Ex:automorphismLDelta}, any small automorphism $\Pi \colon E_\v  \to E_\v$ is of the form $\Pi = \id +  \Pi_1\otimes t = \exp(\Pi_1\otimes t)$ for some $\Pi_1\in \Der(E)$.

Suppose that the proposition holds for all local Artinian $\K$-algebras $\v^\prime$ with $ \dim_\K \v^\prime \leqslant n-1$ for some integer $n \geqslant 3$ .
Given an $n$-dimensional local Artinian $\K$-algebra $\v$,  for each nonzero element $t \in \m_\v$ such that $t \m_\v=0$, one has a  short exact sequence of $\K$-vector spaces
\[
	0 \to  \K t \to \v \xrightarrow{\operatorname{pr}}  \v^\prime:= \v/\K t \to 0,
\]
where $\v^\prime$ is a local Artinian $\K$-algebra of dimension $(n-1)$, and $\operatorname{pr}$ is a morphism of Artinian algebras.
	
Given any $\Pi \in \saut(E^0_{\v}) $, the composition
\[
\widetilde{\Pi} = (\id\otimes\operatorname{pr}) \circ \Pi \colon \Gamma(E) \to\Gamma(E)\otimes\v^\prime
\]
determines an element $\Pi^\prime:= \widetilde{\Pi} \otimes \id \in \saut(E^0_{\v^\prime})$.
By the induction assumption, there exists an element $\delta^\prime \in \Der(E)\otimes_{\K}\m_{\v^\prime} $ such that $\Pi^\prime =\exp(\delta^\prime)$.

Now we choose an element $\delta_0\in \Der(E)\otimes_{\K}\m_{\v}$ satisfying $(\id\otimes \operatorname{pr}) \circ \delta_0=\delta'$. Consider the small automorphism $\Pi \circ \exp(-\delta_0)$ of $E_\v$, which is subject to the relation
\begin{align*}
(\id\otimes \operatorname{pr})\circ(\Pi\circ \exp(-\delta_0))&=
\bigl( (\id\otimes \operatorname{pr})\circ \Pi\bigr)\circ \bigl( (\id\otimes \operatorname{pr})\circ  \exp(-\delta_0)\bigr)\\
&=\exp(\delta') \exp(-\delta')=\id.
\end{align*}
It follows that $\Pi \circ \exp(-\delta_0)$ sends $\Gamma(E)\otimes_\K \v$ to $\Gamma(E) \otimes_\K  (\K\oplus \K t)$.
Since  $\K\oplus \K t\cong \K_2[t]$, by Example \ref{Ex:automorphismLDelta}, we can find an element $\delta_1 \in \Der(E)\otimes_\K  \K t$ such that  $\Pi \circ \exp(-\delta_0)= \exp( \delta_1) $.
Hence, we have
\[
\Pi  =\exp( \delta_1)\circ \exp( \delta_0)=\exp(  \delta_1+ \delta_0).
\]
To see the uniqueness of $\delta$, we observe that if $\exp(\delta)=\id\otimes \id_\v$, then we have
\[
\delta+\frac{1}{2}\delta^2+\frac{1}{6}\delta^3+\cdots  = 0 \colon \Gamma(E) \to \Gamma(E)\otimes_{\K} \maximalidealofartin.
\]
Since $\delta^n(\Gamma(E)) \subset \Gamma(E)\otimes_{\K}\maximalidealofartin^n$ for all $n>0$, solving the above equation degreewisely, we obtain that $\delta(\Gamma(E))\subset \Gamma(E) \otimes_{\K}\maximalidealofartin^n$  for all $n>0$, which implies that $\delta = 0$.
\end{proof}

A small automorphism $\varphi$ of $E_\v^0$ is called \textbf{inner} if it is of the form $\exp(\delta)$, where $\delta \in \IDer(E) \otimes_{\K}\maximalidealofartin$ is an inner derivation. The set $\operatorname{sIAut}(E_\v^0)$ consisting of all small inner automorphisms forms a subgroup of $\operatorname{sAut}(E_\v^0)$.

\section{Infinitesimal deformations of Lie pairs}\label{sec:infinitesimaldeform}
Let $(L, A)$ denote an inclusion $i \colon A \hookrightarrow L$ of Lie algebroids over a common base manifold $M$. This section examines infinitesimal deformations of such Lie pairs. The definition of these deformations necessarily involves a local Artinian $\K$-algebra, denoted by $\v$, which serves as the parameter space for the deformation.

\subsection{Infinitesimal deformations and their standard realizations}

\begin{Def}\label{Def:infdeformation}
An \textbf{infinitesimal deformation} of a Lie pair $(L,A)$ parameterized by a local Artinian $\K$-algebra $\v$   consists of the following data:
\begin{compactenum}
	\item an $\v$-ringed Lie algebroid $(A_\v, [-,-]_{A_\v} ,\rho_{A_\v})$ whose center   Lie algebroid is the given Lie subalgebroid $A$ of $L$;
	\item a morphism of $\v$-ringed Lie algebroids from $(A_\v, [-,-]_{A_\v} ,\rho_{A_\v})$ to the $\v$-Cartesian extension $L^0_\v$ of $L$,
\[
(I, \lambda^\ast) \colon (A_\v, [-,-]_{A_\v} ,\rho_{A_\v}) \to L^0_\v,
\]
whose center $I_0$ coincides with the given inclusion $i \colon A \hookrightarrow L$ of Lie algebroids.
\end{compactenum}	
\end{Def}
Conceptually, an infinitesimal deformation of $(L,A)$ depicts an $\v$-parameterized family of Lie algebroid structures $(A_\v, [-,-]_{A_\v}, \rho_{A_\v})$ on the vector bundle $A$, while the `bigger' Lie algebroid $L$ (which contains $A_\v$) remains unchanged with respect to $\v$.
We shall denote an infinitesimal deformation of $(L,A)$ by the quadruple  $([-,-]_{A_\v} ,\rho_{A_\v}; I, \lambda^\ast)$ or by $(I,\lambda^\ast)$ for simplicity.

\begin{Def}\label{Def:isomorphicdeformation}
Let $
([-,-]_{A_\v} ,\rho_{A_\v}, I, \lambda^\ast)$  { and }
$([-,-]'_{A_\v} ,\rho'_{A_\v}, I^\prime, \lambda^{\prime \ast})$
 be   infinitesimal deformations of a Lie pair $(L,A)$. They are said to be \textbf{strictly isomorphic} if there exists an $\v$-ringed Lie algebroid morphism
\[
(\Pi_A, \lambda_A^\ast) \colon
 (A_\v, \rho^\prime_{A_\v}, [-, -]^\prime_{A_\v}) \to (A_\v, \rho_{A_\v}, [-, -]_{A_\v})
\]
such that the following diagram of morphisms  of $\v$-ringed Lie algebroids
\[
\begin{tikzcd}
 L^0_\v & L^0_\v \arrow[l, "{\id}"'] \\
    (A_\v, \rho_{A_\v}, [-, -]_{A_\v})\arrow[u,"{(I,\lambda^\ast)}"]     & (A_\v, \rho^\prime_{A_\v}, [-, -]^\prime_{A_\v}) \arrow[l,"{(\Pi_A,\lambda_A^\ast)}"'] \arrow[u,"{(I', \lambda^{\prime \ast})}"']
\end{tikzcd}
\]
commutes.
\end{Def}
In this definition, the center $(\Pi_A)_0$ of $\Pi_A$ is necessarily the identity map on  the given vector bundle $A$.

 Given a Lie pair $(L,A)$, the $\v$-linear extension of $i$ defines a strict morphism $
 	I \colon A_\v^0 \to L^0_\v,
 $ between  the $\v$-Cartesian extensions of $L$ and $A$. This is indeed the \textit{trivial} infinitesimal deformation of $(L,A)$. Next, we  present a class of  `nontrivial' infinitesimal deformations.
Note that,   each Lie pair $(L,A)$ determines a short exact sequence of vector bundles over $M$:
\begin{equation}\label{Eq: SES}
	0 \to A \xrightarrow{i} L \xrightarrow{\prb} B:= L/A \to 0.
\end{equation}
Choose a splitting of the exact sequence above, given by an injective vector bundle map $j \colon B \to L$ and a surjective vector bundle map $\pra\colon L\to A$, which induces an isomorphism $L \cong A\oplus B$.
 For each element $\xi \in \Gamma(A^\ast \otimes B) \otimes \maximalidealofartin$, consider the following strict morphism of $\v$-ringed vector bundles
\begin{align}\label{Eqt: Def of Ixi}
  I_{\xi} \colon \Gamma(A_\v) &\to \Gamma(L_\v) \cong \Gamma(A_{\v}) \oplus \Gamma(B_{\v}), \notag \\
   a &\mapsto I_\xi(a) := i(a) +  j(\xi(a)),
\end{align}
which in fact determines a strict bundle map
\begin{align}\label{Eq: anchor}
  \rho_{A_\v}^\xi \colon \Gamma(A_\v) &\to \Gamma(T_\v), \notag \qquad \text{(see Example~\ref{Ex:tangent-bundle})}\\
                                               a &\mapsto \rho^\xi_{A_\v}(a) := \rho_{L_\v^0}(I_\xi(a)),
\end{align}
 and a bracket
 \begin{align}\label{Eq: bracket}
    [\cdot, \cdot]_{A_\v}^\xi \colon \Gamma(A_\v) \times \Gamma(A_\v) &\to \Gamma(A_\v) \notag  \\
   (a_1,a_2) &\mapsto  [a_1,a_2]_{A_\v}^\xi :=\prav \big([I_\xi(a_1), I_{\xi}(a_2)]_{L_\v^0} \big).
 \end{align}
\begin{lem}\label{Prop:equivalentSTD}
Suppose that the map $I_\xi$ satisfies
\begin{equation}\label{Eqt:standarddeformation}
	I_{\xi} \left(\prav \big([I_{\xi}(a_1), I_{\xi}(a_2)]_{L_\v^0} \big)\right) = [I_\xi(a_1), I_\xi(a_2)]_{L_\v^0},
\end{equation}
for all $a_1, a_2 \in \Gamma(A)$. Then the anchor map $\rho_{A_\v}^\xi$~\eqref{Eq: anchor} and the bracket $[-, -]_{A_\v}^\xi$~\eqref{Eq: bracket} together  define an $\v$-ringed Lie algebroid structure on $A_\v$. Moreover, the strict morphism $I_\xi$ in ~\eqref{Eqt: Def of Ixi} is an infinitesimal deformation of $(L, A)$.

\end{lem}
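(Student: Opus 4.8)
The plan is to verify that the triple $(A_\v, [-,-]_{A_\v}^\xi, \rho_{A_\v}^\xi)$ satisfies all axioms of an $\v$-ringed Lie algebroid (Definition~\ref{Def: ringed Lie algebroids}), and then to check that $I_\xi$ together with the identity lifting $\lambda^\ast = \id$ of $M_\v$ constitutes a morphism of $\v$-ringed Lie algebroids whose center is $i$. The key observation driving everything is that the splitting $L \cong A \oplus B$ induces a splitting $\Gamma(L_\v^0) \cong \Gamma(A_\v) \oplus \Gamma(B_\v)$ of $C^\infty(M_\v)$-modules, and that $I_\xi = i + j \circ \xi$ is injective with center $i$; hence, by Proposition~\ref{prop: Isom of vector bundles}, $I_\xi$ is an isomorphism onto its image, and $\prav \circ I_\xi = \id_{\Gamma(A_\v)}$. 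This last identity is what lets us transport structure: the image $\Gamma(\widetilde{A}_\v) := I_\xi(\Gamma(A_\v)) \subset \Gamma(L_\v^0)$ is, under hypothesis~\eqref{Eqt:standarddeformation}, closed under the bracket $[-,-]_{L_\v^0}$, so it is a subalgebroid-like object, and $I_\xi$ is an isomorphism of $\v$-modules from $\Gamma(A_\v)$ onto it.

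First I would record the algebraic consequences of~\eqref{Eqt:standarddeformation}. Applying $\prav$ to both sides and using $\prav \circ I_\xi = \id$ gives nothing new, but applying $\ev$ and using that the center of $I_\xi$ is $i$ and the center of $\prav$ is $p_A$ recovers the original bracket on $A$; this will be needed for the "center" claims. More importantly, \eqref{Eqt:standarddeformation} says precisely that $[I_\xi(a_1), I_\xi(a_2)]_{L_\v^0}$ lies in the image of $I_\xi$, so that $I_\xi([a_1,a_2]_{A_\v}^\xi) = [I_\xi(a_1), I_\xi(a_2)]_{L_\v^0}$ holds by the very definition~\eqref{Eq: bracket} of the bracket. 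From here the Lie algebroid axioms for $A_\v$ follow by pulling them back along the injective, bracket-intertwining map $I_\xi$: $\v$-bilinearity and antisymmetry of $[-,-]_{A_\v}^\xi$ are immediate since $\prav$ and $I_\xi$ are $\v$-linear and $[-,-]_{L_\v^0}$ is antisymmetric; the Jacobi identity for $[-,-]_{A_\v}^\xi$ follows from the Jacobi identity in $L_\v^0$ together with injectivity of $I_\xi$ (apply $I_\xi$ to the jacobiator of $A_\v$, use that $I_\xi$ intertwines brackets to turn it into the jacobiator of $L_\v^0$ evaluated on $I_\xi(a_i)$, which vanishes). The Leibniz rule $[a_1, f a_2]_{A_\v}^\xi = \langle \rho_{A_\v}^\xi(a_1), f\rangle a_2 + f[a_1,a_2]_{A_\v}^\xi$ for $f \in C^\infty(M_\v)$ is checked by writing $I_\xi(f a_2) = f I_\xi(a_2)$ (strictness of $I_\xi$), expanding $[I_\xi(a_1), f I_\xi(a_2)]_{L_\v^0}$ via the Leibniz rule of $L_\v^0$, applying $\prav$, and using $\prav(I_\xi(a_1) \text{'s anchor acting}) $—concretely, $\prav\big(\langle \rho_{L_\v^0}(I_\xi(a_1)), f\rangle I_\xi(a_2)\big) = \langle \rho_{L_\v^0}(I_\xi(a_1)), f\rangle\, a_2$ since $\prav \circ I_\xi = \id$, and $\langle \rho_{L_\v^0}(I_\xi(a_1)), f\rangle = \langle \rho_{A_\v}^\xi(a_1), f\rangle$ by definition~\eqref{Eq: anchor}. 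The anchor compatibility $[\rho_{A_\v}^\xi(a_1), \rho_{A_\v}^\xi(a_2)] = \rho_{A_\v}^\xi([a_1,a_2]_{A_\v}^\xi)$ likewise comes from the corresponding property in $L_\v^0$: $\rho_{A_\v}^\xi([a_1,a_2]_{A_\v}^\xi) = \rho_{L_\v^0}(I_\xi[a_1,a_2]_{A_\v}^\xi) = \rho_{L_\v^0}([I_\xi a_1, I_\xi a_2]_{L_\v^0}) = [\rho_{L_\v^0}(I_\xi a_1), \rho_{L_\v^0}(I_\xi a_2)]$, the middle equality being exactly~\eqref{Eqt:standarddeformation}, and the right side equals $[\rho_{A_\v}^\xi(a_1), \rho_{A_\v}^\xi(a_2)]$.

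Next I would verify that $(I_\xi, \id)$ is a morphism of $\v$-ringed Lie algebroids from $(A_\v, [-,-]_{A_\v}^\xi, \rho_{A_\v}^\xi)$ to $L_\v^0$ in the sense of the definition preceding Definition~\ref{Def:smallcoautoALiealgebroid}. It is a strict morphism of $\v$-ringed vector bundles by construction (it is $C^\infty(M_\v)$-linear since $i,j,\xi$ are, and its center $i + j\circ(\ev\,\xi) = i$ since $\xi \in \Gamma(A^\ast \otimes B)\otimes\maximalidealofartin$ maps to zero under $\ev$), so its center is the vector bundle map $i$; it preserves brackets by the identity $I_\xi([a_1,a_2]_{A_\v}^\xi) = [I_\xi a_1, I_\xi a_2]_{L_\v^0}$ established above; and it is anchor-compatible because $\langle \rho_{A_\v}^\xi(a), f\rangle = \langle \rho_{L_\v^0}(I_\xi(a)), f\rangle$ by definition~\eqref{Eq: anchor} with $\lambda^\ast = \id$. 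Finally the center $I_0$ of $I_\xi$ is $i$, which coincides with the given inclusion, so all the conditions of Definition~\ref{Def:infdeformation} are met and $I_\xi$ is an infinitesimal deformation of $(L,A)$. The main obstacle — really the only place requiring care — is keeping the bookkeeping of which objects are $\v$-linear versus merely $\K$-linear straight, and confirming that $\prav \circ I_\xi = \id_{\Gamma(A_\v)}$, which is where the choice of splitting $L \cong A\oplus B$ and the defining formula $I_\xi = i + j\circ\xi$ interlock: $\prav \circ i = \id_A$ and $\prav \circ j = 0$ by the splitting, extended $\v$-linearly. Everything else is a routine diagram chase along the injective map $I_\xi$, with hypothesis~\eqref{Eqt:standarddeformation} supplying the one nontrivial input that the image of $I_\xi$ is bracket-closed.
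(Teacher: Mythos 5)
Your proposal is correct and follows essentially the same route as the paper: verify the Leibniz rule by expanding $[I_\xi(a_1), fI_\xi(a_2)]_{L_\v^0}$ and projecting with $\prav$, derive the Jacobi identity from that of $L_\v^0$ using Equation~\eqref{Eqt:standarddeformation} to identify iterated $\xi$-brackets with projections of iterated $L_\v^0$-brackets, and observe that the same equation makes $I_\xi$ a bracket-preserving strict morphism with center $i$. The only cosmetic differences are that you phrase the Jacobi step via injectivity of $I_\xi$ rather than via $\prav$ directly (equivalent, since $\prav\circ I_\xi=\id$) and that you additionally spell out the anchor compatibility, which the paper leaves implicit.
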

\begin{proof}
 We first check that $(A_\v,[-,-]_{A_\v}^\xi, \rho_{A_\v}^\xi )$ is an $\v$-ringed Lie algebroid. It suffices to verify the Leibniz rule and the Jacobi identity. In fact, for all $a_1,a_2 \in \Gamma(A)$ and $f \in C^\infty(M,\K)$, we have
\begin{align*}
  [a_1, fa_2]_{A_\v}^\xi &= \prav([I_{\xi}(a_1), I_\xi(fa_2)]_{L_\v^0}) = \prav([I_{\xi}(a_1), fI_\xi(a_2)]_{L_\v^0}) \\
  &= \prav(\rho_{L_\v^0}(I_{\xi}(a_1))(f) \cdot I_\xi(a_2) + f[I_{\xi}(a_1), I_\xi(a_2)]_{L_\v^0}) \\
  &= \rho_{A_\v}^\xi(a_1)f a_2 + f [a_1,a_2]^\xi_{A_\v}.
\end{align*}
Meanwhile, for all $a_1,a_2,a_3 \in \Gamma(A_\v)$, using Equation~\eqref{Eqt:standarddeformation}, we have
\begin{align*}
  [a_1, [a_2,a_3]_{A_\v}^\xi]_{A_\v}^\xi &= \prav[I_\xi(a_1), I_{\xi}(\prav([I_{\xi}(a_2), I_\xi(a_3)]_{L_\v^0}))]_{L_\v^0} \\
  &= \prav[I_\xi(a_1), [I_{\xi}(a_2), I_\xi(a_3)]_{L_\v^0}]_{L_\v^0}.
\end{align*}
Thus, the Jacobi identity for $[-,-]_{L_\v^0}$ implies the Jacobi identity for $[-,-]_{A_\v}^\xi$.
Now using Equation~\eqref{Eqt:standarddeformation} again, we see that $I_{\xi} \colon A_\v \to L_\v^0$ is an inclusion of $\v$-ringed Lie algebroids whose center is the given inclusion $i \colon A \to L$. This proves that $I_\xi$ is an infinitesimal deformation of $(L,A)$.
\end{proof}
We also need the converse fact of Lemma \ref{Prop:equivalentSTD} which is  easily seen.
\begin{lem}\label{Prop:equivalentSTD2}
If ${A_\v}$ is endowed with   an $\v$-ringed Lie algebroid structure  such that $(I_\xi,\id)\colon A_\v\to L_\v^0$ is a morphism of $\v$-ringed Lie algebroids, then
\begin{compactenum}
\item The $\v$-ringed Lie algebroid structure on $A_\v$ is the one determined by Equations~\eqref{Eq: anchor}  and ~\eqref{Eq: bracket};
\item The strict morphism $I_\xi$   is an infinitesimal deformation of $(L, A)$;
\item The map $I_\xi$ is subject to Equation \eqref{Eqt:standarddeformation}.
\end{compactenum}
\end{lem}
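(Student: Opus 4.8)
The plan is to observe that Lemma~\ref{Prop:equivalentSTD2} is essentially the ``unwinding'' of the definition of a morphism of $\v$-ringed Lie algebroids applied to the strict map $I_\xi$, combined with the fact established in Proposition~\ref{prop: Isom of vector bundles} that $I_\xi$ is injective on sections (its center $i$ being injective). First I would note the hypothesis: we are given some $\v$-ringed Lie algebroid structure $([-,-]_{A_\v}, \rho_{A_\v})$ on $A_\v$ for which $(I_\xi, \id)$ is a morphism of $\v$-ringed Lie algebroids onto $L_\v^0$. By definition of a morphism of $\v$-ringed Lie algebroids, $I_\xi$ preserves brackets, i.e. $I_\xi([a_1,a_2]_{A_\v}) = [I_\xi(a_1), I_\xi(a_2)]_{L_\v^0}$ for all $a_1, a_2 \in \Gamma(A_\v)$, and is anchor-compatible, i.e. $\langle \rho_{A_\v}(a), f\rangle = \langle \rho_{L_\v^0}(I_\xi(a)), f\rangle$ for all $f \in C^\infty(M_\v)$ and $a \in \Gamma(A_\v)$ (here $\lambda^\ast = \id$).

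For item (3), I would apply $\prav$ to the bracket-preservation identity: since $\prav \circ I_\xi = \id_{\Gamma(A_\v)}$ (because $\pra$ splits $i$ and $I_\xi = i + j\xi$ with $\pra \circ j = 0$), we get $[a_1,a_2]_{A_\v} = \prav([I_\xi(a_1), I_\xi(a_2)]_{L_\v^0})$. Substituting this back into the bracket-preservation identity yields precisely Equation~\eqref{Eqt:standarddeformation}:
\[
I_\xi\bigl(\prav([I_\xi(a_1), I_\xi(a_2)]_{L_\v^0})\bigr) = I_\xi([a_1,a_2]_{A_\v}) = [I_\xi(a_1), I_\xi(a_2)]_{L_\v^0}.
\]
(One should first check this for $a_1, a_2 \in \Gamma(A) \subset \Gamma(A_\v)$; the general case for $\Gamma(A_\v) = \Gamma(A) \otimes_\K \v$ follows by $\v$-bilinearity of both sides, or one simply notes that Equation~\eqref{Eqt:standarddeformation} as stated only quantifies over $\Gamma(A)$.) For item (1), the anchor-compatibility identity shows $\langle \rho_{A_\v}(a), f\rangle = \langle \rho_{L_\v^0}(I_\xi(a)), f\rangle$, which means $\rho_{A_\v}(a) = \rho_{L_\v^0}(I_\xi(a)) = \rho_{A_\v}^\xi(a)$ as $\v$-ringed bundle maps into $T_\v$; and the bracket identity just derived shows $[a_1,a_2]_{A_\v} = \prav([I_\xi(a_1), I_\xi(a_2)]_{L_\v^0}) = [a_1,a_2]_{A_\v}^\xi$. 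Thus the given structure coincides with the one from Equations~\eqref{Eq: anchor} and~\eqref{Eq: bracket}. Finally, item (2) is immediate: once (3) holds, Lemma~\ref{Prop:equivalentSTD} tells us that $\rho_{A_\v}^\xi$ and $[-,-]_{A_\v}^\xi$ define an $\v$-ringed Lie algebroid structure on $A_\v$ and that $I_\xi$ is an infinitesimal deformation of $(L,A)$; by (1) this is the structure we started with, so $(I_\xi, \id)$ is an infinitesimal deformation.

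I do not anticipate a genuine obstacle here — this is a routine verification that the three conditions are equivalent reformulations of ``$(I_\xi, \id)$ is a morphism of $\v$-ringed Lie algebroids.'' The only mild subtlety worth stating carefully is the role of $\prav$ as a left inverse of $I_\xi$ (which relies on the chosen splitting $L \cong A \oplus B$ and $\pra \circ i = \id_A$, $\pra \circ j = 0$), and the passage between quantifying over $\Gamma(A)$ versus $\Gamma(A_\v)$, which is harmless by $\v$-linearity. I would present the argument in the order (3) $\Rightarrow$ (1) $\Rightarrow$ (2) as above, since deriving the standard-deformation equation first makes the identification of the structures and the appeal to Lemma~\ref{Prop:equivalentSTD} cleanest.
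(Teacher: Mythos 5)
Your proof is correct; the paper itself offers no argument for this lemma (it is dismissed as ``easily seen''), and your write-up supplies exactly the routine verification intended: using $\prav\circ I_\xi=\id$ to extract Equation~\eqref{Eqt:standarddeformation} and the formulas \eqref{Eq: anchor}--\eqref{Eq: bracket} from the morphism axioms, then invoking Lemma~\ref{Prop:equivalentSTD} for item (2). Nothing further is needed.
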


Infinitesimal deformations of the form $I_\xi$ (subject to Equation \eqref{Eqt:standarddeformation}) will be referred to as   \textbf{standard deformations}.

Indeed, any infinitesimal deformation of Lie pairs is strictly isomorphic to a standard one:
Given any infinitesimal deformation $(I, \lambda^\ast)$ of $(L, A)$,  we can use the chosen splitting $j \colon B \to L$ of the short exact sequence~\eqref{Eq: SES} and the associated decomposition $L= A \oplus B$ to express $I$ explicitly. When restricted onto $\Gamma(A)$, the inclusion $I$ decomposes as follows:
\begin{align}
I \colon\Gamma(A) &\to \Gamma(L_\v) = \Gamma(A_{\v}) \oplus \Gamma(B_{\v}),\nonumber \\ \label{Eqt:bigiotaiAiB}
a &\mapsto  I(a) = \bigiota_{A}(a)+ j(\bigiota_{B}(a)),
\end{align}
where   $\bigiota_B\colon\Gamma(A)\to \Gamma(B_\v)$ and  $\bigiota_{A}\colon\Gamma(A)\to\Gamma(A_\v)$  are both $\K$-linear.  Moreover, they satisfy the following conditions.

\begin{lem}
 \begin{compactenum}
 \item The pair $(\bigiota_A, \lambda^\ast)$ defines a small automorphism of the $\v$-ringed vector bundle $A_\v$;
 \item The map
 \[
  \bigiota_{B}\circ\bigiota_{A}^{-1}\colon\Gamma(A)\to \Gamma(B)\otimes \maximalidealofartin
 \]
 is $\CinfMK$-linear.
   \item The corresponding element $\xi:=\bigiota_{B}\circ\bigiota_{A}^{-1}$ in $\Gamma(A^\ast \otimes B)\otimes\maximalidealofartin$ satisfies Equation \eqref{Eqt:standarddeformation}, thus the associated map $I_\xi \colon A_\v^\xi \to L_\v^0$ defines a standard deformation of $(L,A)$.
\end{compactenum}
\end{lem}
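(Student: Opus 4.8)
The plan is to express the three maps $\bigiota_A$, $\bigiota_B$ and $\xi:=\bigiota_B\circ\bigiota_A^{-1}$ in terms of the deformation datum $(I,\lambda^\ast)$ and the strict ($\v$-linear) extensions $\prav$, $\prbv$ of the projections $\pra\colon L\to A$ and $\prb\colon L\to B$ attached to the chosen splitting, and then to read their properties off from Proposition~\ref{prop: Isom of vector bundles}. Since $I\colon\Gamma(A_\v)\to\Gamma(L_\v)$ is $\v$-linear, extending $\bigiota_A,\bigiota_B$ $\v$-linearly one has $\bigiota_A=\prav\circ I$ and $\bigiota_B=\prbv\circ I$ on $\Gamma(A_\v)$, and, under the identification $\Gamma(L_\v)\cong\Gamma(A_\v)\oplus\Gamma(B_\v)$ of~\eqref{Eqt:bigiotaiAiB}, $I=\bigiota_A+j\circ\bigiota_B$.

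For item (1): the composite $\bigiota_A=\prav\circ I$ is a morphism of $\v$-ringed vector bundles $A_\v\to A_\v$ covering $\lambda^\ast$ (since $\prav$ is strict and $I$ covers $\lambda^\ast$), and its center is $\pra\circ i=\id_A$. By Proposition~\ref{prop: Isom of vector bundles} it is therefore an isomorphism, hence a small automorphism of $A_\v$. Item (2) follows similarly: $\bigiota_B=\prbv\circ I$ is $\v$-linear, covers $\lambda^\ast$, and has center $\prb\circ i=0$, so it maps $\Gamma(A_\v)$ into $\Gamma(B)\otimes_\K\maximalidealofartin$; consequently $\xi=\bigiota_B\circ\bigiota_A^{-1}$ is $\v$-linear and covers $\lambda^\ast\circ(\lambda^\ast)^{-1}=\id$, i.e.\ is $C^\infty(M_\v)$-linear, and its restriction to $\Gamma(A)$ is a $\CinfMK$-linear map into $\Gamma(B)\otimes_\K\maximalidealofartin$ — precisely the datum of an element of $\Gamma(A^\ast\otimes B)\otimes_\K\maximalidealofartin$. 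One also checks, and this is automatic, that this $C^\infty(M_\v)$-linear $\xi$ coincides with the $\v$-linear extension of the tensor used in~\eqref{Eqt: Def of Ixi}, since a $C^\infty(M_\v)$-linear map out of $\Gamma(A_\v)$ is determined by its restriction to $\Gamma(A)$.

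For item (3) I would use two elementary facts. First, $\pra\circ i=\id_A$ and $\pra\circ j=0$ give $\prav\circ I_\xi=\id_{\Gamma(A_\v)}$ directly from~\eqref{Eqt: Def of Ixi}. Second, $\xi\circ\bigiota_A=\bigiota_B$ gives $I=I_\xi\circ\bigiota_A$, so, $\bigiota_A$ being bijective on $\Gamma(A_\v)$, the image $I_\xi(\Gamma(A_\v))$ equals $I(\Gamma(A_\v))$, which is closed under $[-,-]_{L_\v^0}$ because $(I,\lambda^\ast)$ is a morphism of $\v$-ringed Lie algebroids. Combining these: for $a_1,a_2\in\Gamma(A)$ write $[I_\xi(a_1),I_\xi(a_2)]_{L_\v^0}=I_\xi(b)$ with $b\in\Gamma(A_\v)$; then $\prav\big([I_\xi(a_1),I_\xi(a_2)]_{L_\v^0}\big)=b$, and applying $I_\xi$ again returns $I_\xi(b)=[I_\xi(a_1),I_\xi(a_2)]_{L_\v^0}$, which is Equation~\eqref{Eqt:standarddeformation}. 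Lemma~\ref{Prop:equivalentSTD} then upgrades $I_\xi$ to a standard deformation $I_\xi\colon A_\v^\xi\to L_\v^0$ of $(L,A)$.

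The substance is mostly bookkeeping, and the points requiring care are: that all maps in play are genuinely $\v$-linear, so extension from $\Gamma(A)$ to $\Gamma(A_\v)$ is unambiguous; that $\bigiota_A$ is invertible on all of $\Gamma(A_\v)$ (not merely on $\Gamma(A)$), which is where Proposition~\ref{prop: Isom of vector bundles} is used; and the compatibility of the two descriptions of $\xi$ noted above. Once these are settled, item (3) — the heart of the claim — needs no bracket computation beyond the single fact that $I$ preserves brackets.
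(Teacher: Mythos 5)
Your proposal is correct and follows essentially the same route as the paper: identify $\bigiota_A=\prav\circ I$ and $\bigiota_B=\prbv\circ I$, invoke Proposition~\ref{prop: Isom of vector bundles} to invert $\bigiota_A$ and deduce the semilinearity statements, and derive Equation~\eqref{Eqt:standarddeformation} from the identity $I=I_\xi\circ\bigiota_A$ together with the fact that $I$ preserves brackets. Your item~(3) merely repackages the paper's explicit five-line bracket computation as the equivalent observation that $I_\xi(\Gamma(A_\v))=I(\Gamma(A_\v))$ is closed under $[-,-]_{L_\v^0}$ while $\prav\circ I_\xi=\id$.
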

\begin{proof}
$(1)$  Since $I \colon A_\v \to L_\v$ is an $\v$-ringed vector bundle morphism covering a small automorphism $\lambda^\ast$ of $M_\v$,  it follows that $I(fa)=\lambda^\ast(f)I(a)$ for all $a \in \Gamma(A_\v)$ and $f \in C^\infty(M_\v)$. Thus, we have
\begin{equation}\label{Eqt:temptemp1}
\bigiota_{A}(fa)=\lambda^\ast(f)\bigiota_{A}(a),
\end{equation}
and
\begin{equation}\label{Eqt:temptemp2}
\bigiota_{B}(fa)=\lambda^\ast(f)\bigiota_{B}(a).
\end{equation}	
The center of $\bigiota_{A}$  is the identity $\id$ of $A$. This (according to Proposition \ref{prop: Isom of vector bundles}) implies that $\bigiota_{A}$ is an automorphism of $A_\v$.
Thus, by Equation \eqref{Eqt:temptemp1}, $\bigiota_{A}$ covers the small automorphism  $\lambda^\ast$ of $M_\v$. This proves Statement (1).

$(2)$  Note that,   the inverse  $\bigiota_{A}^{-1}$ of $\bigiota_{A}$ is a small automorphism of $A_\v$ covering $(\lambda^{\ast})^{-1}$, i.e.,
\[
\bigiota_{A}^{-1}(fa)=(\lambda^{\ast})^{-1} (f)\bigiota_{A}^{-1}(a).
\]
Combining this equation with Equation \eqref{Eqt:temptemp2},  we see that $\bigiota_{B}\circ\bigiota_{A}^{-1}$ is $\CinfMK$-linear.

$(3)$ Obviously,  $\bigiota_{B}\circ\bigiota_{A}^{-1}$ determines an element $\xi \in \Gamma(A^\ast \otimes B)\otimes\maximalidealofartin$ satisfying
\[
 I_\xi(a) = a + j(\xi(a)) = a+ j(\bigiota_{B}(\bigiota_{A}^{-1}(a))) = I(\bigiota_{A}^{-1}(a)).
\]
Since $I =\bigiota_A+j\circ\bigiota_B$ is an infinitesimal deformation, we have
\begin{equation}\label{Eqt:iota}
I([a_1,a_2]_{A_\v}) = [I(a_1), I(a_2)]_{L_\v^0},
\end{equation}
for all $a_1,a_2\in\Gamma(A_\v)$. Thus,
	\begin{eqnarray*}
		&&(I_\xi\circ\pra)	\big([I_{\xi}(a_1), I_{\xi}(a_2)]_{L_\v^0} \big)\\
		&=& (I_\xi\circ\pra) \big([I (\bigiota_{A}^{-1}(a_1)), I (\bigiota_{A}^{-1}(a_2))]_{L_\v^0} \big) \qquad \text{by Equation~\eqref{Eqt:iota}} \\
		&=& (I_\xi \circ\bigiota_{A}) \big([\bigiota_{A}^{-1}(a_1),\bigiota_{A}^{-1}(a_2)]_{A_\v}\big)  \\
		&=& I\big([\bigiota_{A}^{-1}(a_1),\bigiota_{A}^{-1}(a_2)]_{A_\v}\big)  \qquad\qquad \text{by Equation \eqref{Eqt:iota}} \\
		&=&[I\circ\bigiota_{A}^{-1}(a_1), I\circ\bigiota_{A}^{-1}(a_2)]_{L_\v^0}   \\
		&=& [I_{\xi}(a_1), I_{\xi}(a_2)]_{L_\v^0}.
	\end{eqnarray*}
\end{proof}

In the sequel, $([-,-]_{A_\v}^\xi,\rho_{A_\v}^\xi; I_\xi, \id )$, induced from $(I,\lambda^\ast)$ and the splitting $j \colon B \to L$, will be called the  \textbf{standard realization} of $(I,\lambda^\ast)$.

\begin{prop}\label{Lem:standard}
The standard realization  $([-,-]_{A_\v}^\xi,\rho_{A_\v}^\xi; I_\xi, \id )$ is strictly isomorphic to $(I,\lambda^\ast)$.
\end{prop}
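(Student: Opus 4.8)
The plan is to exhibit an explicit strict isomorphism of deformations realizing the diagram in Definition~\ref{Def:isomorphicdeformation}, with the candidate being $\Pi_A := \bigiota_A$ (together with $\lambda_A^\ast := \lambda^\ast$). By part $(1)$ of the preceding lemma, $(\bigiota_A,\lambda^\ast)$ is already a small automorphism of the $\v$-ringed vector bundle $A_\v$; what remains is to promote it to a morphism of $\v$-ringed Lie algebroids from $(A_\v,\rho_{A_\v}^\xi,[-,-]_{A_\v}^\xi)$ to $(A_\v,\rho_{A_\v},[-,-]_{A_\v})$ and to check that the square commutes.

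First I would verify the commutativity of the square, which amounts to the identity $I \circ \bigiota_A = \id \circ I_\xi$ as maps $\Gamma(A_\v) \to \Gamma(L_\v^0)$, i.e.\ $I \circ \bigiota_A = I_\xi$. This is immediate from part $(3)$ of the lemma, where it was shown that $I_\xi(a) = I(\bigiota_A^{-1}(a))$ for all $a \in \Gamma(A)$; applying $\bigiota_A$ gives exactly $I_\xi \circ \bigiota_A = I$ after renaming, and $\v$-linearity extends this from $\Gamma(A)$ to $\Gamma(A_\v) = \Gamma(A)\otimes_\K \v$. Next I would use this very relation to transport the Lie algebroid structure: since $I_\xi$ is, by Lemma~\ref{Prop:equivalentSTD2} applied to the standard realization (or directly by Lemma~\ref{Prop:equivalentSTD}), an injective morphism of $\v$-ringed Lie algebroids with image a Lie subalgebroid, and since $I$ is a morphism of $\v$-ringed Lie algebroids by hypothesis (it is the given infinitesimal deformation $(I,\lambda^\ast)$), the equality $I = I_\xi \circ \bigiota_A$ forces $\bigiota_A$ to intertwine the brackets and anchors. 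Concretely, for $s_1,s_2 \in \Gamma(A_\v)$ one computes
\[
I_\xi\bigl(\bigiota_A([s_1,s_2]_{A_\v}^\xi)\bigr) = I([s_1,s_2]_{A_\v}^\xi),
\]
but also $I_\xi([s_1,s_2]_{A_\v}^\xi) = [I_\xi(s_1),I_\xi(s_2)]_{L_\v^0} = [I(\bigiota_A^{-1}s_1),I(\bigiota_A^{-1}s_2)]_{L_\v^0}$; comparing and using that $I_\xi$ is injective (Proposition~\ref{prop: Isom of vector bundles}) together with $I$ being a bracket morphism yields $\bigiota_A([s_1,s_2]_{A_\v}^\xi) = [\bigiota_A(s_1),\bigiota_A(s_2)]_{A_\v}$. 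The anchor compatibility condition~\eqref{Eq: comor of LA} for $(\bigiota_A,\lambda^\ast)$ follows the same way from $\rho_{A_\v}^\xi = \rho_{L_\v^0}\circ I_\xi$, $\rho_{A_\v} = \rho_{L_\v^0}\circ I$, and the anchor compatibility already satisfied by $(I,\lambda^\ast)$.

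The only genuine subtlety — and the step I expect to require the most care — is the bookkeeping needed to pass from the generating space $\Gamma(A)$ to all of $\Gamma(A_\v)$, and to make sure that what the earlier lemma calls the Lie algebroid structure "determined by~\eqref{Eq: anchor} and~\eqref{Eq: bracket}" really coincides with the structure $([-,-]_{A_\v}^\xi,\rho_{A_\v}^\xi)$ appearing in the standard realization; this is exactly the content of Lemma~\ref{Prop:equivalentSTD2}(1), so it can be invoked rather than reproved. Finally, since $\bigiota_A$ is invertible with inverse a small automorphism of $A_\v$ (shown in part $(2)$ of the lemma), $(\Pi_A,\lambda_A^\ast) = (\bigiota_A,\lambda^\ast)$ is an isomorphism, so the two infinitesimal deformations are strictly isomorphic in the sense of Definition~\ref{Def:isomorphicdeformation}. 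I would close by remarking that no further computation is needed: every ingredient has already been established in the three-part lemma preceding the proposition, and the proof is essentially an assembly of those facts around the single identity $I = I_\xi \circ \bigiota_A$.
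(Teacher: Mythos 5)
Your proposal follows essentially the same route as the paper: the strict isomorphism is $\bigiota_A$, the whole argument hinges on the single identity $I = I_\xi \circ \bigiota_A$ from part (3) of the preceding lemma, and the Lie algebroid compatibility of $\bigiota_A$ is extracted from the fact that both $I$ and $I_\xi$ are bracket morphisms (the paper does this by a direct computation with $\pra$, you do it via injectivity of $I_\xi$; both work). The one thing to repair is a persistent orientation slip: you declare $\Pi_A = \bigiota_A$ as a morphism \emph{from} $(A_\v,\rho_{A_\v}^\xi,[-,-]_{A_\v}^\xi)$ \emph{to} $(A_\v,\rho_{A_\v},[-,-]_{A_\v})$ and assert the square requires $I\circ\bigiota_A = I_\xi$, but the identity you actually verify is $I_\xi\circ\bigiota_A = I$, which makes $\bigiota_A$ a morphism in the \emph{opposite} direction; consistently, your final displayed conclusion $\bigiota_A([s_1,s_2]_{A_\v}^\xi) = [\bigiota_A(s_1),\bigiota_A(s_2)]_{A_\v}$ has the two brackets interchanged — the correct statement (and the one your injectivity argument actually yields after substituting $s_i = \bigiota_A(t_i)$) is $\bigiota_A([t_1,t_2]_{A_\v}) = [\bigiota_A(t_1),\bigiota_A(t_2)]_{A_\v}^\xi$. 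Since $\bigiota_A$ is invertible with small inverse, strict isomorphism is symmetric and the slip is harmless, but you should either swap the roles of the two deformations in the square (as the paper does, putting $I_\xi$ on the left) or replace $\bigiota_A$ by $\bigiota_A^{-1}$ throughout.
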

\begin{proof}
The goal is to establish the following commutative diagram of morphisms of $\v$-ringed Lie algebroids:
\[
 \begin{tikzcd}
 	L^0_\v & L^0_\v \arrow[l,"{\id }"'] \\
 	({A_\v }, [-,-]_{A_\v}^\xi, \rho_{A_\v}^\xi)\arrow[u,"{(I_\xi, \id)}"]     & (A_\v, \rho_{A_\v}, [-, -]_{A_\v}). \arrow[l,"{(\bigiota_A,\lambda^\ast)}"']\arrow[u,"{(I, \lambda^\ast)}"']
 \end{tikzcd}
 \]
This diagram naturally commutes by definitions of these arrows.
It suffices to show that $\bigiota_A$ is a morphism of $\v$-ringed Lie algebroids covering  $\lambda^\ast$.
In fact, since the map $I =\bigiota_{A}+j \circ \bigiota_{B}$ is a morphism of $\v$-ringed Lie algebroids, we have
\[
\bigiota_A([a_1,a_2]_{A_\v}) = \pra\big([I(a_1), I(a_2)]_{L_\v^0}) = [\bigiota_A(a_1),\bigiota_A(a_2)]_{A_\v}^{\xi},
\]
and
\[
\rho_{A_\v}^{\xi}(\bigiota_A(a)) =\rho_{L_\v^0}(\bigiota_A(a)+j(\xi(\bigiota_A(a)))) = \rho_{L_\v^0}(I(a)),
\]
for all $a,a_1,a_2\in\Gamma(A)$.
Thus, $\bigiota_A$ intertwines the relative $\v$-ringed Lie algebroid structures.
\end{proof}

\begin{Ex}
  Consider a codimension $k$ foliation $\mathcal{F}$ in an $n$-dimensional real smooth manifold $M$. Let $F \subset T_M$ be the involutive subbundle tangent to the leaves of $\mathcal{F}$. Then $(T_M, F)$ is a Lie pair with $B = T_M/F$ the normal bundle of $\mathcal{F}$.
  It follows that the set of infinitesimal deformations of $(T_M, F)$ parameterized the algebra $\R_2[t]$ of dual numbers coincides with that of infinitesimal deformations of the foliation $\mathcal{F}$~\cite{Heitsch}, the latter of which is defined by a smooth family of involutive distributions $\{F_t\}_{t \in \R}$ in $T_M$ such that  $F_0 = F$.

  More precisely, given an infinitesimal deformation $F_t$ of $F$, each Riemannian metric on $T_M$ determines a family of splittings $\pr_{F_t} \colon T_M \to F_t$ of the short exact sequences of vector bundles (over $M$)
  \[
   0 \to F_t \to T_M \xrightarrow{\pr_{B_t}} B_t = T_M/F_t \to 0.
  \]
 According to~\cite{Heitsch}, there exists an element $\sigma \in \Gamma(F^\ast \otimes B)$ defined for all $X \in \Gamma(F)$ by
 \[
  \sigma(X) = \pr_B\left(\frac{d}{dt}\mid_{t=0} \pr_{F_t}(X)\right).
 \]
 Let
 \[
  \xi = \sigma \cdot t \in \Gamma(F^\ast \otimes B) \otimes \mathfrak{m}_{\R_2[t]}.
 \]
 Then the associated map $I_\xi  \colon F_{\R_2[t]} \to (T_M)_{\R_2[t]}$ satisfies Equation~\eqref{Eqt:standarddeformation}. This condition  is indeed equivalent to the vanishing of the integrability tensor $\Lambda_t$ of the deformation $F_t$ (see~\cite{Heitsch}*{Proposition 2.10}).
Thus, $I_\xi$ defines a standard deformation of $(T_M,F)$.
\end{Ex}

\subsection{Weak deformation functors and their standard realizations}
It is too restrictive to classify infinitesimal deformations up to strict isomorphism.
We now introduce isomorphism classes of infinitesimal deformations which form a larger set in the sense that they are defined up to \emph{small automorphisms} of the Cartesian extension $L^0_\v$ of $L$.
\begin{Def}\label{Def:full-and-semi-iso}
Two infinitesimal deformations of a Lie pair $(L,A)$, \[
([-,-]_{A_\v} ,\rho_{A_\v}; I, \lambda^\ast)\quad \text{and}\quad ([-,-]^\prime_{A_\v} ,\rho^\prime_{A_\v}; I^\prime, \lambda^{\prime^\ast}),
\]
are said to be weak isomorphic (resp. semistrict isomorphic) if there exist
\begin{compactenum}
\item an element $(\exp(\Delta)$, $\exp(\sigma(\Delta)))$ in the small automorphism group $\saut (L_\v^0)$ (resp. small inner automorphism group $\operatorname{sIAut}(L_\v^0)$) of the $\v$-Cartesian extension $L_\v^0$ of $L$, and
\item  an $\v$-ringed Lie algebroid morphism
\[
(\Pi_A, \lambda^\ast_A):~(A_\v, [-,-]^\prime_{A_\v} ,\rho^\prime_{A_\v})\to  (A_\v, [-,-]_{A_\v} ,\rho_{A_\v}),
\]
\end{compactenum}
 such that the following diagram
\[
\begin{tikzcd}
 L^0_\v & L^0_\v \arrow[l, "{(\exp(\Delta), \exp(\sigma(\Delta)))}"'] \\
    (A_\v, \rho_{A_\v}, [-, -]_{A_\v}) \arrow[u,"{(I,\lambda^\ast)}"]     & (A_\v, \rho^\prime_{A_\v}, [-, -]^\prime_{A_\v}) , \arrow[l,"{(\Pi_A,\lambda_A^\ast)}"'] \arrow[u,"{(I^\prime, \lambda^{\prime\ast})}"']
\end{tikzcd}
\]
commutes.
\end{Def}

In this definition, the center $(\Pi_A)_0$ of $\Pi_A$ is  necessarily the identity map on  the given vector bundle $A$.

\begin{Def}\label{Def:infdefmfunctor}
The weak deformation functor ${\infDefFctLA}$, associated with the Lie pair $(L,A)$, is a functor  from the category $\ArtCat$ of local Artinian $\K$-algebras to the category \textbf{Set} of sets. Specifically, for each $\v\in \ArtCat$, the resulting set ${\infDefFctLA}(\v)$ is defined as:
\[
	{\infDefFctLA}(\v):=
	\Bigg\{\begin{aligned}
		~&\textrm{weak isomorphism classes of }\\
		&\textrm{ infinitesimal deformations of }\\
		&(L,A)~\textrm{ parameterized by   } \v
	\end{aligned}\Bigg\}.
\]
\end{Def}

For a morphism $\vartheta$ in the category	$\ArtCat$, the resulting map of sets ${\infDefFctLA}(\vartheta)$ is naturally defined.  This convention is adopted for all types of deformation functors throughout the paper.

Similarly, we denote by $\sinfDefFctLA$ the \textbf{semistrict deformation functor}, which sends each $\v$ to the set of semistrict isomorphism classes of $\v$-parameterized infinitesimal deformations of $(L,A)$.

Next, we   represent the weak and the  semistrict  infinitesimal deformation functors by  standard realizations. Let us fix a splitting $j$ of the short exact sequence~\eqref{Eq: SES} and denote by ${\rm Sd}(L,A,\v)$ the set of solutions $\xi \in \Gamma(A^* \otimes B) \otimes \maximalidealofartin$  to Equation~\eqref{Eqt:standarddeformation}, which can be identified  (via the chosen splitting $j$) with the set of standard deformations of $(L,A)$.

The  small automorphism group $\saut (L_\v^0)$ of the $\v$-Cartesian Lie algebroid $L_\v^0$ acts on the set of infinitesimal deformations, and   on the set ${\rm Sd}(L,A,\v)$ accordingly. Let us explain this fact.

\begin{compactenum}
  \item Given $\Pi\in \saut (L_\v^0)$ and $\xi\in \Gamma(A^* \otimes B) \otimes \maximalidealofartin$, we first establish the following commutative diagram in the category of $\v$-ringed vector bundles:
 \begin{equation}\label{Diag:temp1}
 \begin{tikzcd}
 	L^0_\v & L^0_\v \arrow[l,"{\Pi}"'] \\
 	{A_\v}\arrow[u,"{I_{\Pi \triangleright \xi}}"]     & {A_\v }. \arrow[l,"{ \Pi_\xi}"']\arrow[u,"{I_\xi}"']
 \end{tikzcd}
 \end{equation}
In this diagram,  we define
\[
 \Pi_\xi:=\pra\circ \Pi \circ I_\xi \colon \quad A_\v \to A_\v ,
 \]
as a morphism  of $\v$-ringed vector bundles. Note that,   the center of $\Pi_\xi$ is the identity map $\id \colon A \to A$. By Proposition~\ref{prop: Isom of vector bundles}, $\Pi_\xi$ is a small automorphism of the $\v$-ringed vector bundle $A_\v$. So we can consider the map
 \[
  \prb \circ \Pi \circ I_{\xi} \circ \Pi_\xi^{-1}: \quad A_\v \to B_\v.
\]
  It can be easily seen that it corresponds to an element   $\Pi \triangleright \xi\in \Gamma(A^\ast \otimes B) \otimes \maximalidealofartin $. Moreover, the morphism $I_{\Pi \triangleright \xi}=\id+j\circ{\Pi \triangleright \xi}  \colon A_\v \to A_\v\oplus B_\v=L_\v$ (of $\v$-ringed vector bundles) satisfies
 \begin{align}\label{Eq: CD}
 	I_{\Pi \triangleright \xi}(\Pi_\xi(a)) &= \Pi_\xi(a) + j(\Pi \triangleright \xi(\Pi_\xi(a)))\notag \\
 	& = \pra(\Pi (I_\xi(a))) + (j\circ \prb)(\Pi(I_{\xi}(a)))\notag \\
 	& = \Pi(I_\xi(a)),
 \end{align}
 for all $a \in \Gamma(A)$. Therefore, we see that Diagram \eqref{Diag:temp1} is indeed commutative.

  \item If $\xi$ is in the smaller subset ${\rm Sd}(L,A,\v)$($\subset \Gamma(A^* \otimes B) \otimes \maximalidealofartin$), then $I_\xi$ is an infinitesimal deformation of the Lie pair $(L,A)$. In this situation, we can equip $A_\v$ with a new $\v$-ringed Lie algebroid structure $([\cdot,\cdot]_{A_\v}', \rho_{A_\v}')$ by pulling back the original one $( [\cdot,\cdot]_{A_\v}^\xi, \rho_{A_\v}^\xi)$ through the small automorphism $\Pi_\xi$.  In doing so, we obtain the following commutative diagram in the category of $\v$-ringed Lie algebroids:
    \[
 \begin{tikzcd}
 	L^0_\v & L^0_\v \arrow[l,"{\Pi}"'] \\
 	({A_\v}, [\cdot,\cdot]'_{A_\v}, \rho'_{A_\v})\arrow[u,"{I_{\Pi \triangleright \xi}}"]     & ({A_\v }, [\cdot,\cdot]_{A_\v}^\xi, \rho_{A_\v}^\xi). \arrow[l,"{ \Pi_\xi}"']\arrow[u,"{I_\xi}"']
 \end{tikzcd}
 \]
  \item According to Lemma \ref{Prop:equivalentSTD2}, the $\v$-ringed  Lie algebroid $({A_\v}, [\cdot,\cdot]'_{A_\v}, \rho'_{A_\v})$ is exactly $({A_\v}$, $[\cdot,\cdot]_{A_\v}^{\Pi \triangleright \xi}$, $\rho_{A_\v}^{\Pi \triangleright \xi})$, and the element  $\Pi \triangleright \xi$  belongs to ${\rm Sd}(L,A,\v)$.
 \end{compactenum}

As a summary of the above  construction,  we have the desired action
 \begin{align}\label{Eq:Def of action}
   - \triangleright - \colon \saut (L_\v^0) \times {\rm Sd}(L,A,\v) &\to {\rm Sd}(L,A,\v), \notag \\
      (\Pi, \xi) &\mapsto \Pi \triangleright \xi := \prb \circ \Pi \circ I_{\xi} \circ \Pi_\xi^{-1}.
 \end{align}
 In a similar fashion, the group   $\operatorname{sIAut}(L_\v^0)$ of small inner automorphisms of    $L_\v^0$   also acts on the set ${\rm Sd}(L, A, \v)$. So the following   proposition is now obvious.

\begin{prop}\label{prop: isom standard deformation}
For any $\xi, \eta \in {\rm Sd}(L,A,\v)$, the standard deformations $I_\xi$ and $I_\eta$ are weak isomorphic (resp. semistrict isomorphic) if and only if $\xi$ and $\eta$ are in the same orbit, i.e., $\eta=\Pi \triangleright \xi$ for some $\Pi \in \saut (L_\v^0)$ (resp. $\operatorname{sIAut}(L_\v^0)$).

As a consequence, there exists a one-to-one correspondence between the set $\infDefFctLA(\v)$ of weak isomorphism classes of infinitesimal deformations (resp. $\sinfDefFctLA(\v)$ of semistrict isomorphism classes) and the orbit space ${\rm Sd}(L, A, \v) / \saut (L_\v^0)$ (resp. ${\rm Sd}(L, A, \v) / \operatorname{sIAut}(L_\v^0)$).
\end{prop}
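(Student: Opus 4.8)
The plan is to reduce the statement to its first assertion — the orbit criterion for weak (resp.\ semistrict) isomorphism of \emph{standard} deformations — and then derive the counting statement from it formally together with Proposition~\ref{Lem:standard}. Throughout I would keep the splitting $j\colon B\to L$ of~\eqref{Eq: SES} fixed and record two elementary identities valid for every $\xi\in\Gamma(A^\ast\otimes B)\otimes\maximalidealofartin$: since $I_\xi=i+j\circ\xi$ by~\eqref{Eqt: Def of Ixi}, and the splitting gives $\pra\circ i=\id$, $\pra\circ j=0$, $\prb\circ i=0$, $\prb\circ j=\id$, one has $\pra\circ I_\xi=\id_{\Gamma(A_\v)}$ and $\prb\circ I_\xi=\xi$. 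These two identities are what make the action~\eqref{Eq:Def of action} bookkeepable.

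The implication ``same orbit $\Rightarrow$ weak isomorphic'' requires essentially no new work. If $\eta=\Pi\triangleright\xi$ for $\Pi\in\saut(L_\v^0)$, the three-step construction preceding the statement already produces the square in which $\Pi_\xi=\pra\circ\Pi\circ I_\xi$ satisfies $I_\eta\circ\Pi_\xi=\Pi\circ I_\xi$ by~\eqref{Eq: CD}, and, by step~(3) there together with Lemma~\ref{Prop:equivalentSTD2}, is a morphism of $\v$-ringed Lie algebroids $(A_\v,[-,-]^\xi_{A_\v},\rho^\xi_{A_\v})\to(A_\v,[-,-]^\eta_{A_\v},\rho^\eta_{A_\v})$ with center $\id_A$. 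That square is literally an instance of the diagram of Definition~\ref{Def:full-and-semi-iso} with the $\eta$-deformation as target, so $I_\xi$ and $I_\eta$ are weak isomorphic; if $\Pi\in\operatorname{sIAut}(L_\v^0)$ the same square proves they are semistrict isomorphic.

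For the converse I would unwind Definition~\ref{Def:full-and-semi-iso} for $I_\xi$ and $I_\eta$, again designating the $\eta$-deformation as target: this yields $\Pi:=\exp(\Delta)\in\saut(L_\v^0)$ (resp.\ in $\operatorname{sIAut}(L_\v^0)$) and a morphism $\Pi_A\colon(A_\v,[-,-]^\xi_{A_\v},\rho^\xi_{A_\v})\to(A_\v,[-,-]^\eta_{A_\v},\rho^\eta_{A_\v})$ of $\v$-ringed Lie algebroids with $\Pi\circ I_\xi=I_\eta\circ\Pi_A$; since its center is $\id_A$, Proposition~\ref{prop: Isom of vector bundles} shows $\Pi_A$ is invertible. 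Composing $\Pi\circ I_\xi=I_\eta\circ\Pi_A$ with $\pra$ and using $\pra\circ I_\eta=\id$ identifies the automorphism $\Pi_\xi:=\pra\circ\Pi\circ I_\xi$ of $A_\v$ appearing in~\eqref{Eq:Def of action} with $\Pi_A$; composing instead with $\prb$ and using $\prb\circ I_\eta=\eta$ then gives
\[
\Pi\triangleright\xi=\prb\circ\Pi\circ I_\xi\circ\Pi_\xi^{-1}=\prb\circ I_\eta\circ\Pi_A\circ\Pi_A^{-1}=\eta,
\]
so $\xi$ and $\eta$ lie in a single $\saut(L_\v^0)$-orbit (resp.\ $\operatorname{sIAut}(L_\v^0)$-orbit). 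This completes the orbit criterion.

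For the counting statement I would consider the map $\Psi\colon{\rm Sd}(L,A,\v)\to\infDefFctLA(\v)$ sending $\xi$ to the weak isomorphism class of $I_\xi$. By Proposition~\ref{Lem:standard} every infinitesimal deformation of $(L,A)$ is strictly — hence weakly — isomorphic to its standard realization $I_\xi$, with the corresponding $\xi$ lying in ${\rm Sd}(L,A,\v)$, so $\Psi$ is surjective; by the orbit criterion just proved, $\Psi(\xi)=\Psi(\eta)$ precisely when $\xi$ and $\eta$ share a $\saut(L_\v^0)$-orbit. Hence $\Psi$ descends to a bijection ${\rm Sd}(L,A,\v)/\saut(L_\v^0)\xrightarrow{\sim}\infDefFctLA(\v)$, and the identical argument with $\operatorname{sIAut}(L_\v^0)$ gives ${\rm Sd}(L,A,\v)/\operatorname{sIAut}(L_\v^0)\cong\sinfDefFctLA(\v)$. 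I do not expect a serious obstacle here: the real content was front-loaded into the construction of the action~\eqref{Eq:Def of action} and into Lemma~\ref{Prop:equivalentSTD2}. The one delicate point is the bookkeeping in the converse direction, where one must be careful which of the two deformations is designated the ``target'' in Definition~\ref{Def:full-and-semi-iso} so that the correct one of $\pra\circ I_\zeta=\id$, $\prb\circ I_\zeta=\zeta$ is applied, and must remember to invoke Proposition~\ref{prop: Isom of vector bundles} to invert $\Pi_A$.
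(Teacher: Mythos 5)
Your proposal is correct and follows exactly the route the paper intends: the paper declares the proposition ``now obvious'' after the three-step construction of the action $\triangleright$, and your argument is precisely the omitted verification --- the forward direction reads off the commutative square from Equation~\eqref{Eq: CD} and Lemma~\ref{Prop:equivalentSTD2}, the converse identifies $\Pi_A$ with $\Pi_\xi$ via $\pra\circ I_\eta=\id$ and $\prb\circ I_\eta=\eta$, and the counting statement combines the orbit criterion with Proposition~\ref{Lem:standard}. No gaps.
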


Determining whether two elements, $\xi$ and $\eta$, in ${\rm Sd}(L, A, \v)$ belong to the same orbit under the action of either $\operatorname{sAut}(L_\v^0)$ or $\operatorname{sIAut}(L_\v^0)$ is a significant challenge. The primary obstacle lies in the need to explicitly compute the inverse $\Pi_\xi^{-1}$ within the group action map defined in Equation~\eqref{Eq:Def of action}, a task that is often analytically difficult. To circumvent this difficulty, we will introduce an alternative approach in the subsequent section.

\subsection{Gauge equivalence of Maurer-Cartan elements}

In deformation theory,    every formal deformation problem should be governed by an $L_\infty$-algebra. Specifically, isomorphic deformations correspond to gauge-equivalent Maurer-Cartan elements within this $L_\infty$-algebra. In this section, we investigate the weak (resp. semistrict) deformation functor of Lie pairs by means of the associated $L_\infty$-algebras. To begin, we recall some fundamental concepts related to $L_\infty$-algebras, as presented in~\cite{Getzler}. Note that our sign convention differs slightly from that used in~\cite{LM95}.

\begin{Def}
An $L_\infty$-algebra is a graded vector space $\g$ equipped with a collection of skew-symmetric maps $[\cdots]_k\colon \wedge^k \g\to \g$ of degree $(2-k)$ for all $k\geqslant 1$, called the $k$-bracket, satisfying the $n$-Jacobi identity
\[
\sum_{i=1}^n (-1)^i \sum_{\sigma \in {\rm Sh}(i,n-i) }\chi(\sigma) [[x_{\sigma(1)},\cdots, x_{\sigma(i)}]_i, x_{\sigma(i+1)},\cdots, x_{\sigma(n)}]_{n-i+1}=0,
\]
for all $n\geqslant 1$ and all homogeneous elements $x_1,\cdots, x_n \in \g$. Here ${\rm Sh} (i,n-i)$ is the set of $(i,n-i)$-shuffles, and $\chi(\sigma)$ is the Koszul sign of the $(i,n-i)$-shuffle $\sigma$ of the $n$-input $(x_1,\cdots, x_n)$.
\end{Def}
In particular, the $1$-bracket $[-]_1\colon \g\to \g$ is of degree $1$ and square zero, thus defines a cochain complex. We denote by $\H^i (\g, [-]_1)$ the $i$-th cohomology of $\g$ with respect to $[-]_1$.

An $L_\infty$-algebra with $[\cdots]_k=0$ for all $k\geqslant 3$ is a \textbf{dg Lie algebra}.
An $L_\infty$-algebra with $[\cdots]_k=0$ for all $k\geqslant 4$ is called a  \textbf{cubic $L_\infty$-algebra}~\cite{GMS} (also known as an $\Linftythree$-algebra).

The \textit{lower center filtration} $F^i \g$ on an $L_\infty$-algebra $\g$ is the decreasing filtration defined by $F^1 \g=\g$ and,  for $i\geqslant 2$, is defined inductively by
\[
F^i \g=\sum \limits_{i_1+\cdots+i_k=i}[F^{i_1}\g,\cdots,F^{i_k}\g]_k.
\]
An $L_\infty$-algebra  $\g$ is called \textbf{nilpotent} if the lower center series terminates,
that is,  $F^i \g=0$ for $i \gg 0$.
\begin{Ex}\label{Ex: tensor extension}
If $\g$ is an $L_\infty$-algebra  and $\v$ is a local Artinian $\K$-algebra with maximal ideal $\maximalidealofartin $, then the $\v$-extensions of all $k$-brackets on  $\g\otimes\maximalidealofartin$, i.e.,
\[
[x_1\otimes v_1, \cdots, x_k \otimes v_k]_k=[x_1, \cdots, x_k]_k\otimes v_1\cdots v_k,
\]
for all $x_i \in \g, v_i \in \maximalidealofartin$, together make $\g\otimes\maximalidealofartin$
into a nilpotent $L_\infty$-algebra.
\end{Ex}

 A \textbf{Maurer-Cartan element} of a nilpotent $L_\infty$-algebra $\g$ is a degree $1$ element $\xi \in \g^1$ satisfying the following Maurer-Cartan equation
	\[
		\sum_{k=1}^\infty \dfrac{1}{k!}[\xi,\cdots,\xi]_k=0.
	\]
Denote by $\MC(\g)$ the set of Maurer-Cartan elements of a nilpotent $L_\infty$-algebra $\g$.
Any Maurer-Cartan element $\xi \in \MC(\g)$ determines a new sequence of brackets
\begin{align*}
[x_1,\cdots, x_i]^\xi_i &=\sum_{k=0}^\infty \frac{1}{k!}[\xi^{\wedge k}, x_1, \cdots, x_i]_{k+i},
\end{align*}
where $[\xi^{\wedge k}, x_1,\cdots, x_i]_{k+i}$ is an abbreviation for
$[\xi, \cdots, \xi, x_1, \cdots, x_i]_{k+i}$, in which $\xi$ occurs $k$ times. We call $[\cdots]^\xi_i$ the $i$-th $\xi$-bracket.
These $\xi$-brackets $\{[ \cdots ]^\xi_i\}_{i \geqslant 1}$ defines a new nilpotent $L_\infty$-algebra structure on $\g$ (see~\cite{Getzler}*{Proposition 4.4}).

Two Maurer-Cartan elements $\xi, \eta \in \MC(\g)$ are said to be \textbf{gauge equivalent} if they are connected by \textbf{the $L_\infty$-exponential} of an element $b \in \g^0$ in the following sense:
\[
 \eta = e^b *\xi :=\xi-\sum_{k=1}^\infty \frac{1}{k!} e^k_\xi(b) \in \g^1,
\]
where $e^1_\xi(b)= [b]^\xi_1$, and the components $e^{k+1}_\xi(b) \in \g^1$ for $k \geqslant 1$ are inductively determined by
\[
 e^{k+1}_\xi(b) = \sum_{n=0}^\infty \frac{1}{n!} \sum_{\substack{k_1+\cdots+k_n=k \\ k_i\geqslant 1}} \frac{k!}{k_1!\cdots k_n!} [b,e^{k_1}_\xi(b),\cdots,e^{k_n}_\xi(b)]^\xi_{n+1}.
\]
\begin{Rem}
The definition of $L_\infty$-exponentials arises from Getzler's formula for the generalized Campbell-Hausdorff series, which is expressed as a sum of terms indexed by rooted trees. Specifically, Proposition 5.9 in Getzler's work provides a detailed exposition of this formula. Notably, when applied to a dg Lie algebra $\mathfrak{g}$, the $L_\infty$-exponential map reduces to the classical exponential map for dg Lie algebras.
 \end{Rem}

\begin{Def}\label{Def:deformationfunctor}
	The algebraic deformation functor (associated to an $L_\infty$-algebra $\g$)
	\[
	{\firstDef}\colon \ArtCat \to \textbf{Set},
	\]
	sends each local Artinian $\K$-algebra $\v \in \ArtCat$ to the set ${\firstDef}(\v)$ of gauge equivalent classes in $\MC(\g \otimes \maximalidealofartin)$, and each morphism $\vartheta\colon \v\to \v'$ to the map
	\begin{align*}
		\firstDef(\vartheta) \colon   {\firstDef}(\v)&\to   {\firstDef}(\v'), \\
		[\xi] &\mapsto   [(\id_\g\otimes \vartheta)\xi],
	\end{align*}
	for all $ \xi\in \MC(\g \otimes \maximalidealofartin )$.
\end{Def}

\begin{Ex}\label{Rem:tangentspace}
Consider the algebra $\v = \K_2[t] = \K[t]/(t^2)$ of dual numbers.
Note that,
\[
 \MC(\g \otimes \mathfrak{m}_{\K_2[t]}) \cong \{ \xi \otimes t | \xi \in \g^1,  [\xi]_1=0\},
\]
and that $b \in \g^0 \otimes \mathfrak{m}_{\K_2[t]}$ acts on $\xi$ by $e^b \ast \xi = \xi - [b]_1$. Therefore, the set $\firstDef(\K_2[t])$ is isomorphic to the first cohomology $\H^1(\g, [-]_1)$ of the $L_\infty$-algebra $\g$, thus a $\K$-vector space, known as \textbf{the tangent space} of the functor ${\firstDef}$.
\end{Ex}

We now describe the governing   $L_\infty$-algebra of infinitesimal deformations of $(L,A)$.
In fact, according to~\cite{BCSX}*{Proposition 4.1},  each splitting $j \colon B \to L$ of the short exact sequence~\eqref{Eq: SES} of vector bundles induces a cubic $L_\infty$-algebra structure $\{[\cdots]_k\}_{k=1}^3$ on the graded vector space $$\Omega^\bullet_A(B) := \Gamma(\wedge^\bullet A^*\otimes B) ,$$ whose unary bracket $[-]_1$ is the Chevalley-Eilenberg differential
\[
  \dCE \colon \Omega^\bullet_A(B) \to \Omega_A^{\bullet+1}(B)
\]
of the $A$-module structure on $B$  from the flat Bott $A$-connection $\nabla$ on $B$ defined by
\[
  \nabla_a b = \pr_B([a,j(b)]_L),\quad \forall a \in \Gamma(A), b \in \Gamma(B).
\]

For each local Artinian $\K$-algebra $\v$ with maximal ideal $\maximalidealofartin $, by Example~\ref{Ex: tensor extension}, the $\v$-linear extension $\Omega_A^\bullet(B) \otimes \maximalidealofartin$ of $\Omega_A^\bullet(B)$, when equipped with the $\v$-linear extension of the structure maps $\{[\cdots]_k\}_{k=1}^3$,  is a nilpotent cubic $\Linfty$-algebra.
By abuse of notations, we also denote these extended structure maps on $\Omega_A^\bullet(B) \otimes \maximalidealofartin$ by $\{[\cdots]_k\}_{k=1}^3$.

\begin{prop}\label{lem:MCstandard}
Given a splitting $j \colon  B \to L$ of the short exact sequence~\eqref{Eq: SES}, for each $\xi \in \Omega^1_A(B) \otimes \maximalidealofartin$, the map $I_{\xi} \colon A_\v \to L_\v$ defined in~\eqref{Eqt: Def of Ixi} is a standard deformation of $(L,A)$ if and only if $\xi\in \MC(\Omega_A^\bullet(B) \otimes \maximalidealofartin)$.
\end{prop}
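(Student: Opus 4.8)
The plan is to unwind the Maurer--Cartan equation for the cubic $L_\infty$-algebra $\Omega^\bullet_A(B)\otimes\maximalidealofartin$ and match it, term by term, with Equation~\eqref{Eqt:standarddeformation} characterizing standard deformations. By Lemma~\ref{Prop:equivalentSTD} together with Lemma~\ref{Prop:equivalentSTD2}, the map $I_\xi$ is a standard deformation precisely when Equation~\eqref{Eqt:standarddeformation} holds, i.e. when $I_\xi\bigl(\prav([I_\xi(a_1),I_\xi(a_2)]_{L_\v^0})\bigr)=[I_\xi(a_1),I_\xi(a_2)]_{L_\v^0}$ for all $a_1,a_2\in\Gamma(A)$. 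Since $I_\xi(a)=i(a)+j(\xi(a))$ and $L\cong A\oplus B$ via the chosen splitting, the failure of this identity is measured entirely by its $B$-component, i.e. by $\prb\bigl([I_\xi(a_1),I_\xi(a_2)]_{L_\v^0}\bigr)-\prb\circ j\circ\xi\bigl(\prav([I_\xi(a_1),I_\xi(a_2)]_{L_\v^0})\bigr)$; the $A$-component is automatically satisfied because $I_\xi$ is $\prav\circ I_\xi=\id$ on $\Gamma(A)$ extended $\v$-linearly. So the first step is to rewrite Equation~\eqref{Eqt:standarddeformation} as the vanishing of an explicit element of $\Omega^2_A(B)\otimes\maximalidealofartin$.

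Second, I would expand $[I_\xi(a_1),I_\xi(a_2)]_{L_\v^0}=[i(a_1)+j(\xi(a_1)),\,i(a_2)+j(\xi(a_2))]_{L_\v^0}$ using $\v$-bilinearity of the bracket on $L_\v^0$ into four terms: $[i(a_1),i(a_2)]_L$, the two cross terms $[i(a_1),j(\xi(a_2))]_L$ and $[j(\xi(a_1)),i(a_2)]_L$, and the quadratic term $[j(\xi(a_1)),j(\xi(a_2))]_L$. Projecting to $B$ and comparing with the explicit formulas for the cubic $L_\infty$-brackets $[\cdots]_k$ on $\Omega^\bullet_A(B)$ from~\cite{BCSX}*{Proposition 4.1}: the unary bracket $\dCE$ on a degree-$1$ element $\xi$ produces, when evaluated on $(a_1,a_2)$, exactly $\nabla_{a_1}\xi(a_2)-\nabla_{a_2}\xi(a_1)-\xi([a_1,a_2]_A)$, which by the definition $\nabla_a b=\prb([a,j(b)]_L)$ is the $B$-part of the cross terms minus $\xi$ applied to $[a_1,a_2]_A$; the binary bracket $[\xi,\xi]_2$ contributes the $B$-part of $[j(\xi(a_1)),j(\xi(a_2))]_L$ (its ``curvature''-type piece), and the ternary bracket $[\xi,\xi,\xi]_3$ accounts for the correction coming from reprojecting via $\xi\circ\prav$ of the quadratic term, i.e. the term $\xi(\prav([j(\xi(a_1)),j(\xi(a_2))]_L))$. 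Collecting, $I_\xi$ satisfies~\eqref{Eqt:standarddeformation} iff $[\xi]_1+\tfrac12[\xi,\xi]_2+\tfrac16[\xi,\xi,\xi]_3=0$, which is the Maurer--Cartan equation for the cubic $L_\infty$-algebra (all higher brackets vanish). Note that $[\xi]_1=\dCE\xi\in\Omega^2_A(B)\otimes\maximalidealofartin$ lands in degree $2$ as required, consistent with $\xi\in\Omega^1_A(B)\otimes\maximalidealofartin$ being a degree-$1$ element.

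The main obstacle I anticipate is the bookkeeping in the third step: matching the precise structure constants (the factors $\tfrac12$, $\tfrac16$, signs, and the alternation over $a_1,a_2$) between the Lie-bracket expansion on $L_\v^0$ and the $L_\infty$-brackets of~\cite{BCSX}, in particular verifying that the $A$-valued projection $\prav([j(\xi(a_1)),j(\xi(a_2))]_L)$ fed back through $\xi$ reproduces exactly $\tfrac16[\xi,\xi,\xi]_3$ and not some other multiple; this requires carefully using the definitions of $\pra$ and $j$ as a splitting (so $\prav\circ j=0$, $\prb\circ j=\id$, $i\circ\prav+j\circ\prb=\id$ on $L$). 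A clean way to organize this is to observe that $\prav([I_\xi(a_1),I_\xi(a_2)]_{L_\v^0})=[a_1,a_2]_{A_\v}^\xi$ by the very definition~\eqref{Eq: bracket}, so Equation~\eqref{Eqt:standarddeformation} reads $j(\xi([a_1,a_2]_{A_\v}^\xi))=\prb\bigl([I_\xi(a_1),I_\xi(a_2)]_{L_\v^0}\bigr)$ composed with $j$; then substitute the expansion of $[a_1,a_2]_{A_\v}^\xi$ (again four terms, with the $A$-parts of the cross terms and quadratic term) and collect by polynomial degree in $\xi$, i.e. in powers of elements of $\maximalidealofartin$. Terms of degree $1$ in $\xi$ give $[\xi]_1$, degree $2$ give $\tfrac12[\xi,\xi]_2$, and degree $3$ give $\tfrac16[\xi,\xi,\xi]_3$; since $L\cong A\oplus B$ splits $L$ into exactly two summands, no terms of degree $\geqslant 4$ in $\xi$ appear, which is the structural reason the governing algebra is cubic rather than a full $L_\infty$-algebra. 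Once the degree-by-degree identification is in place, the equivalence is immediate.
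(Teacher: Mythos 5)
Your proposal is correct and follows essentially the same route as the paper: both reduce Equation~\eqref{Eqt:standarddeformation} to the vanishing of the $B$-valued $2$-form $\prb([I_\xi(a_1),I_\xi(a_2)]_{L_\v^0})-\xi(\prav([I_\xi(a_1),I_\xi(a_2)]_{L_\v^0}))$, expand the bracket into the four terms, and match the pieces of polynomial degree $1$, $2$, $3$ in $\xi$ with $[\xi]_1$, $\tfrac12[\xi,\xi]_2$, $\tfrac16[\xi,\xi,\xi]_3$ using the explicit formulas from \cite{BCSX}, concluding via Lemmas~\ref{Prop:equivalentSTD} and~\ref{Prop:equivalentSTD2}. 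The bookkeeping you flag as the main obstacle is exactly the content of the paper's displayed computation, and your degree-counting correctly accounts for the quadratic terms $\xi(\prav([a_1,j(\xi(a_2))]_{L_\v^0}))$ arising from feeding the $A$-parts of the cross terms back through $\xi$.
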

\begin{proof}
For each $\xi \in \Omega_A^1(B) \otimes \maximalidealofartin$, according to the formulas in \cite{BCSX}*{Proposition 4.3}, the elements $[\xi]_1, [\xi,\xi]_2, [\xi,\xi,\xi]_3 \in \Omega_A^2(B) \otimes \maximalidealofartin$ are defined by for all $a_1, a_2 \in \Gamma(A)$,
\begin{align*}
  [\xi]_1(a_1, a_2) &= \prb([a_1, j(\xi(a_2))]_{L_\v^0})  - \prb([a_2, j(\xi(a_1))]) - \xi([a_1,a_2]_A), \\
  \frac{1}{2!}[\xi,\xi]_2(a_1,a_2) &= \prb([j(\xi(a_1)),j(\xi(a_2))]_{L_\v^0}) - \xi(\pra([j(\xi(a_1)),a_2]_{L_\v^0}))\\
  &\qquad - \xi(\pra([a_1, j(\xi(a_2))]_{L_\v^0})), \\
  \frac{1}{3!}[\xi,\xi,\xi]_3(a_1,a_2) &= - \xi(\pra[j(\xi(a_1)), j(\xi(a_2))]_{L_\v^0}).
\end{align*}
Thus, we have
\begin{align*}
	&\quad (\dCE \xi+\frac{1}{2}[\xi,\xi]_2+\frac{1}{6}[\xi,\xi,\xi]_3)(a_1,a_2)\\
  &=\prb([a_1, j(\xi(a_2))]_{L_\v^0})  - \prb([a_2, j(\xi(a_1))]_{L_\v^0})\\
  &\qquad - \xi(\pra[I_\xi(a_1), I_{\xi}(a_2)]_{L_\v^0}) +  \prb([j(\xi(a_1)),j(\xi(a_2))]_{L_\v^0}) \\
&= [I_\xi(a_1), I_\xi(a_2)]_{L_\v^0} - I_\xi(\pra[I_\xi(a_1), I_\xi(a_2)]_{L_\v^0}).
\end{align*}
By Lemma~\ref{Prop:equivalentSTD}, Equation \eqref{Eqt:standarddeformation} in particular, the inclusion $I_\xi$ defines a standard deformation of $(L,A)$ if and only if $\xi$ solves  the Maurer-Cartan equation of the cubic $L_\infty$-algebra $\Omega_A^{\bullet}(B) \otimes \maximalidealofartin$.
\end{proof}

It is natural to expect that the basic cubic $L_\infty$-algebra $\Omega_A^\bullet(B)$ controls the  infinitesimal deformations of $(L,A)$. That is, the associated algebraic deformation functor is isomorphic to the weak (or semistrict) deformation functor.
However, the degree $0$ component $\Gamma(B)$ of $\Omega_A^\bullet(B)$ cannot generate (by the  $L_\infty$ exponentials) the weak symmetry group ${\rm Aut}(L)$ of the Lie algebroid $L$ that acts on infinitesimal deformations of the Lie pair $(L,A)$.
To remedy this issue, we introduce another $L_\infty$-algebra $\h$, which is the extension of the basic cubic $L_\infty$-algebra $\Omega^\bullet_A(B)$ by the Lie algebra $\Der(L)$ of derivations of the Lie algebroid $L$.

\begin{prop}[\cite{NCCH}] \label{prop:extended}
The graded vector space
 \[
   \h := (\Der(L) \oplus \Gamma(B)) \bigoplus \left(\bigoplus_{n\geqslant 1}\Omega_A^n(B)\right)
 \]
is a cubic $L_\infty$-algebra extending the structure maps of the basic $L_\infty$-algebra $\Omega_A^\bullet(B)$ as follows:
\begin{compactenum}
  \item The $1$-bracket $[-]_1 \colon \Der(L) \to \Omega_A^1(B)$ is given by
  \[
      [\delta]_1 (a) = -\prb(\delta(a)),
  \]
  for all $ \delta \in \Der(L)$, $a \in \Gamma(A)$.
  \item The extended $2$-bracket $[-]_2$ is determined by the canonical commutator
   \[
      [\delta_1, \delta_2]_2  := \delta_1 \delta_2 - \delta_2 \delta_1,
   \]
  for all $\delta_1,\delta_2 \in \Der(L)$, and the Lie algebra $\Der(L)$  action on $\Omega_A^\bullet(B)$
  defined by
\begin{align*}
	[\delta, X]_2(a_1,\cdots,a_k) &:= (\prb \circ \delta \circ j)(X(a_1,\cdots,a_k)) - \sum_{j=1}^{k}X(a_1,\cdots,\pra\delta(a_j), \cdots,a_k),
\end{align*}
 for all $\delta \in \Der(L), X \in \Omega_A^k(B)$, and all $a_1,\cdots,a_k \in \Gamma(A)$.
  \item The $3$-bracket is the operation
  \[
    [-]_3 \colon \Der(L) \otimes \Omega^p_A(B) \otimes \Omega^q_A(B) \to \Omega_A^{p+q-1}(B)
  \]
  defined by
  \begin{align*}
    &\quad [\delta, X, Y]_3(a_1,\cdots,a_{p+q-1}) \\
    &= (-1)^{p+1}\sum_{\sigma \in \sh(p,q-1)}\sgn(\sigma) Y((\pra \circ \delta \circ j)X(a_{\sigma(1)},\cdots,a_{\sigma(p)}), a_{\sigma(p+1)},\cdots,a_{\sigma(p+q-1)}) \\
    &\quad + \sum_{\tau \in \sh(p-1,q)} \sgn(\tau) X((\pra \circ \delta \circ j) Y(a_{\tau(p+1)},\cdots, a_{\tau(p+q-1)} ),a_{\tau(1)},\cdots,a_{\tau(p)}),
  \end{align*}
  for all $\delta \in \Der(L), X\in \Omega_A^p(B), Y \in \Omega_A^q(B)$ and $a_1,\cdots,a_{i+j-1} \in \Gamma(A)$.
\end{compactenum}
\end{prop}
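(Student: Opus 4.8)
The plan is to verify directly that the operations $\{[\cdots]_k\}_{k=1}^3$ put on $\h$ obey the generalized Jacobi identities of a cubic $L_\infty$-algebra. Since every bracket of arity $\geqslant 4$ vanishes, a summand $[[x_{\sigma(1)},\dots,x_{\sigma(i)}]_i,x_{\sigma(i+1)},\dots,x_{\sigma(n)}]_{n-i+1}$ of the $n$-th Jacobi identity can be nonzero only when $i\leqslant 3$ and $n-i+1\leqslant 3$; this forces $n\leqslant 5$, and for $n=5$ only the term $i=3$ survives. Moreover $\Omega_A^\bullet(B)$ is an $L_\infty$-subalgebra of $\h$ carrying exactly the cubic $L_\infty$-structure of \cite{BCSX}*{Proposition~4.1}, so every Jacobi identity all of whose inputs lie in $\Omega_A^\bullet(B)$ already holds. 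It therefore remains to check the identities for $n\in\{1,\dots,5\}$ involving at least one input in $\Der(L)$; since $[-]_3$ accepts at most one $\Der(L)$-input and the only $\Der(L)$-valued bracket is the commutator $[-,-]_2$ on $\Der(L)\times\Der(L)$, each such identity has exactly one or exactly two inputs in $\Der(L)$, and I would organize the work by that count.

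For identities with two inputs $\delta_1,\delta_2\in\Der(L)$: the $n=2$ case is the Jacobi identity for the commutator bracket on $\Der(L)$, immediate once one checks that $\Der(L)$ is closed under commutators. The cases $n=3,4,5$ express compatibility of the operation $\delta\mapsto[\delta,-]_2$ with the commutator on $\Der(L)$ and with the brackets $[-,-]_2$, $[-,-,-]_3$ of $\Omega_A^\bullet(B)$; their precise form already contains correction terms built from $[\delta,-,-]_3$ and $\theta:=[-]_1|_{\Der(L)}$, so $\delta\mapsto[\delta,-]_2$ is a $\Der(L)$-action on $\Omega_A^\bullet(B)$ only up to homotopy. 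Each of these unwinds, via the explicit formulas in the statement, to the Jacobi identity of $[-,-]_L$ and the derivation axioms for $\delta_1,\delta_2$.

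For identities with a single input $\delta\in\Der(L)$, write $\theta(\delta):=[\delta]_1=-\prb\circ\delta|_A\in\Omega_A^1(B)$. The $n=1$ identity reads $\dCE\theta(\delta)=0$; one checks it by expanding $(\dCE\theta(\delta))(a_1,a_2)$ with the Bott connection $\nabla_a b=\prb([a,j(b)]_L)$ and using $\delta[a_1,a_2]_L=[\delta a_1,a_2]_L+[a_1,\delta a_2]_L$ together with the fact that $A$ is a subalgebroid, so $\theta(\delta)$ is a $1$-cocycle. The $n=2$ identity says that $[\delta,-]_2$ is a derivation of $[-]_1=\dCE$ up to the inner twist by $\theta(\delta)$. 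The $n=3$ and $n=4$ identities are the homotopy-Leibniz compatibilities of $[\delta,-]_2$ and $[\delta,-,-]_3$ with $[-,-]_2$, respectively $[-,-,-]_3$, of $\Omega_A^\bullet(B)$, and the $n=5$ identity is the pentagon relating $[\delta,-,-]_3$ to $[-,-,-]_3$; each is a long but mechanical computation using the Jacobi identity of $[-,-]_L$, the derivation axioms for $\delta$, and the formulas of \cite{BCSX}*{Proposition~4.3} for $[-,-]_2$ and $[-,-,-]_3$ on $\Omega_A^\bullet(B)$.

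The hard part will be the identities for $n=4$ and $n=5$ that feed into the $3$-bracket $[\delta,-,-]_3$: this bracket is precisely the homotopy measuring the failure of $\delta$ to preserve $[-,-]_2$ on $\Omega_A^\bullet(B)$---a failure caused by $\delta$ not preserving the splitting $j\colon B\to L$, equivalently not preserving $A\subset L$---so the shuffle sums and Koszul signs in those identities demand careful bookkeeping. A cleaner route I would pursue in parallel is to find a dg Lie algebra $\mathfrak G$ on which $\Der(L)$ acts strictly by Lie derivatives and which admits a deformation retraction onto $\Omega_A^\bullet(B)$ inducing the $L_\infty$-structure of \cite{BCSX}; extending the retraction by the identity on the $\Der(L)$-summand and applying the homotopy transfer theorem to the semidirect product $\Der(L)\ltimes\mathfrak G$ would produce the cubic $L_\infty$-structure on $\h$ automatically, leaving only the identification of the transferred brackets of arity $\leqslant 3$ with the formulas in the statement---and dispensing with every Jacobi identity by hand.
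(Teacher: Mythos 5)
The paper offers no proof of this proposition --- it is imported wholesale from \cite{NCCH} --- so there is no internal argument to measure your proposal against; what follows judges the plan on its own terms. Your overall strategy is the right elementary one: the arity constraints correctly force all generalized Jacobi identities with $n\geqslant 6$ to be vacuous (with only $i=3$ surviving at $n=5$), and the identities whose inputs all lie in $\Omega_A^\bullet(B)$ are indeed already settled by \cite{BCSX}*{Proposition 4.1}. Your verification of the $n=1$ identity, namely $\dCE[\delta]_1=0$ via the derivation axiom for $\delta$ and $[\Gamma(A),\Gamma(A)]_L\subset\Gamma(A)$, is correct and is the only piece of the argument you actually complete.

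There are two genuine problems. First, the case bookkeeping is wrong: the claim that every remaining identity has ``exactly one or exactly two inputs in $\Der(L)$'' is false, since the $n=3$ identity with all three inputs in $\Der(L)$ survives and is precisely the Jacobi identity for the commutator --- which you instead attribute to the $n=2$ case. The genuine $n=2$ identity with two derivation inputs is a different statement, namely that $\theta=[-]_1|_{\Der(L)}$ is a $1$-cocycle intertwining the commutator on $\Der(L)$ with the action $[\delta,-]_2$ on $\Omega_A^1(B)$; this identity is missing from your enumeration altogether. Second, and more seriously, everything from $n=2$ onward is deferred as ``long but mechanical,'' yet the entire content of the proposition lives in those computations: the $n=4$ and $n=5$ identities are the only places where the signs and shuffle conventions in the stated formula for $[\delta,X,Y]_3$ are actually tested, and a sign error there is exactly the kind of thing a direct verification is meant to catch. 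The homotopy-transfer route you float would bypass this, but you exhibit no candidate dg Lie algebra $\mathfrak{G}$ carrying a strict $\Der(L)$-action and retracting onto $\Omega_A^\bullet(B)$, so as written it is an aspiration rather than an argument. In short, this is a mostly sound outline with one misclassified and one omitted identity, not yet a proof.
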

We call $\h$ the \textbf{extended cubic $L_\infty$-algebra} of $(L,A)$.
Denote by ${\rm Def}_{\h}$ the algebraic deformation functor associated to the $L_\infty$-algebra $\h$.

Our first main theorem of this paper states that the cubic $L_\infty$-algebra $\h$ controls weak deformations of the Lie pair $(L,A)$.
\begin{Thm}\label{Thm:MainTheorem}
The algebraic deformation functor  ${\rm Def}_{\h}$ associated to the   extended cubic $L_\infty$-algebra $\h$ of $(L,A)$   is naturally isomorphic to the
weak infinitesimal deformation functor ${\infDefFctLA}$.
\end{Thm}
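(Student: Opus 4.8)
The plan is to prove the theorem by reducing both functors to one combinatorial model and then matching the two equivalence relations that live on it. First I would use Proposition~\ref{prop: isom standard deformation} to identify $\infDefFctLA(\v)$ with the orbit set ${\rm Sd}(L,A,\v)/\saut(L_\v^0)$, where $\saut(L_\v^0)$ acts by the explicit rule $\triangleright$ of~\eqref{Eq:Def of action}, and Proposition~\ref{lem:MCstandard} to identify ${\rm Sd}(L,A,\v)$ with $\MC(\Omega^\bullet_A(B)\otimes\maximalidealofartin)$. I would then observe that $\MC(\h\otimes\maximalidealofartin)=\MC(\Omega^\bullet_A(B)\otimes\maximalidealofartin)$: since $\h^1=\Omega^1_A(B)$, any Maurer--Cartan element $\xi$ of $\h\otimes\maximalidealofartin$ already lies in $\Omega^1_A(B)\otimes\maximalidealofartin$, and by Proposition~\ref{prop:extended} the $\h$-brackets of elements of the sub-$L_\infty$-algebra $\Omega^\bullet_A(B)$ agree with its intrinsic structure maps, so the two Maurer--Cartan equations coincide verbatim. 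After these reductions the theorem becomes the statement that, for $\xi,\eta\in{\rm Sd}(L,A,\v)$, one has $\eta=\Pi\triangleright\xi$ for some $\Pi\in\saut(L_\v^0)$ if and only if $\xi$ and $\eta$ are gauge equivalent in $\h\otimes\maximalidealofartin$, together with the routine verification that the induced bijection is natural in $\v$.

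The core of the argument is thus a comparison of the degree-$0$ actions, which I would organize along the decomposition $\h^0\otimes\maximalidealofartin=\bigl(\Der(L)\otimes\maximalidealofartin\bigr)\oplus\bigl(\Gamma(B)\otimes\maximalidealofartin\bigr)$. Concretely I would prove two things: (i) for $\delta\in\Der(L)\otimes\maximalidealofartin$, the Getzler $L_\infty$-exponential satisfies $e^{\delta}\ast\xi=\exp(\delta)\triangleright\xi$, the right-hand side being the geometric action $\prb\circ\exp(\delta)\circ I_\xi\circ\Pi_\xi^{-1}$ with $\Pi_\xi=\prav\circ\exp(\delta)\circ I_\xi$ from~\eqref{Diag:temp1}; and (ii) for $\beta$ in the $\Gamma(B)$-summand of $\h^0\otimes\maximalidealofartin$, the gauge action of $\beta$ coincides with that of the inner derivation $\operatorname{ad}_{j(\beta)}\in\Der(L)\otimes\maximalidealofartin$, so that $e^{\beta}\ast\xi=e^{\operatorname{ad}_{j(\beta)}}\ast\xi$. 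Granting (i) and (ii), the two inclusions of orbit relations follow formally: $\saut(L_\v^0)=\{\exp(\delta)\}$ by Proposition~\ref{prop-aut}, Getzler's exponentials compose via the Campbell--Hausdorff product, and $\triangleright$ is a group action, so the $\saut(L_\v^0)$-orbit of $\xi$ equals its $\h^0\otimes\maximalidealofartin$-gauge orbit.

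Establishing (i) is the main obstacle. The strategy is to expand both sides by the $\maximalidealofartin$-adic filtration: on the geometric side one expands $\exp(\delta)=\sum_n\delta^n/n!$ and $\Pi_\xi^{-1}$ as a geometric series and propagates the splitting $L\cong A\oplus B$; on the algebraic side one substitutes the bracket formulas of Proposition~\ref{prop:extended} and the $\xi$-twisted brackets $[\cdots]^\xi$, whose explicit shape comes from the formulas of~\cite{BCSX}. The low-order case is already instructive: when $\maximalidealofartin^2=0$ both sides reduce to the single term $\xi-[\delta]_1$, i.e.\ $a\mapsto\xi(a)+\prb(\delta(a))$, which fixes the leading behaviour; the difficulty is to reconcile, at all orders, Getzler's rooted-tree coefficients $e^k_\xi(\delta)$ with the nested conjugations produced by $\exp(\delta)\circ I_\xi\circ\Pi_\xi^{-1}$. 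I expect this to require an induction on the nilpotency length of $\maximalidealofartin$, mirroring the induction already used in Proposition~\ref{prop-aut}, together with careful bookkeeping of the signs and combinatorial factors in the cubic brackets of $\h$.

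For (ii), I would compute directly from Proposition~\ref{prop:extended} and the Bott connection $\nabla_a\beta=\prb([a,j(\beta)]_L)$ that $[\operatorname{ad}_{j(\beta)}]_1=\dCE\beta=[\beta]_1$ and, more generally, that $\operatorname{ad}_{j(\beta)}$ acts on $\Omega^\bullet_A(B)$ through the $2$- and $3$-brackets of $\h$ exactly as the internal element $\beta\in\Omega^0_A(B)$ does; this makes $e^{\beta}$ and $e^{\operatorname{ad}_{j(\beta)}}$ act identically on all Maurer--Cartan elements. Finally, naturality in $\v$ is immediate: the $\v$-Cartesian extension, the sets ${\rm Sd}(L,A,-)$, the $L_\infty$-algebras $\Omega^\bullet_A(B)$ and $\h$, and the gauge and $\saut$ actions are all visibly compatible with base change along any morphism $\vartheta\colon\v\to\v'$ in $\ArtCat$, so the bijection of orbit sets constructed above intertwines the two functors' transition maps.
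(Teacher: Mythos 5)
Your proposal follows essentially the same route as the paper: reduce $\infDefFctLA(\v)$ to the orbit set ${\rm Sd}(L,A,\v)/\saut(L^0_\v)$, identify ${\rm Sd}(L,A,\v)$ with Maurer--Cartan elements, and match the geometric action $\triangleright$ with Getzler's gauge action --- your identity (i), which you rightly flag as the main obstacle, is precisely the paper's Proposition~\ref{Lem:gaugeEquivalent}, whose proof rests on the combinatorial relation $y^k=-\sum_{p=0}^{k}\tbinom{k}{p}e^p_\xi(\delta)\circ x^{k-p}$ established by induction on $k$ in Lemma~\ref{Lemma in appendix}. Your step (ii), checking that the $\Gamma(B)$-summand of $\h^0$ gauges Maurer--Cartan elements in the same way as the inner derivations $\operatorname{ad}_{j(\beta)}$, is a point the paper leaves implicit (its proof only exhibits gauge transformations coming from $\Der(L)\otimes\maximalidealofartin$), so making it explicit is a useful supplement rather than a deviation.
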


In order to prove Theorem \ref{Thm:MainTheorem}, we need to build a natural transformation of functors
\begin{equation*}
\gamma\colon {\rm Def}_{\h}\Rightarrow {\infDefFctLA}.
\end{equation*}
Let us first define, for any local Artinian $\K$-algebra $\v$,  a map of sets
\[
  \gamma_\v \colon {\rm Def}_{\h}(\v) \to {\infDefFctLA}(\v).
\]
For every gauge equivalent class $[\xi] \in {\rm Def}_{\h}(\v)$, where $\xi \in \MC (\h \otimes\maximalidealofartin)$,  we define $\gamma_\v([\xi])$ to be the equivalence class of the standard deformation $([-,-]_{A_\v}^\xi,\rho_{A_\v}^\xi; I_\xi, \id )$, or the orbit in ${\rm Sd}(L, A, \v)/ \saut (L_\v^0)$ passing through $\xi$ by Proposition~\ref{prop: isom standard deformation}.
To see that $\gamma_\v$ is well-defined, we prove the following proposition.
\begin{prop}\label{Lem:gaugeEquivalent}
	Two Maurer-Cartan elements $\xi, \eta \in \MC(\h \otimes\maximalidealofartin)$ are gauge equivalent if and only if the associated standard deformations $I_\xi$ and $I_\eta$ of the Lie pair $(L,A)$ are weak isomorphic.
\end{prop}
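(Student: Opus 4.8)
The plan is to reduce the statement, via Proposition~\ref{prop: isom standard deformation}, to a comparison of two group actions on a single set, and then to match those actions at the level of infinitesimal generators, using that $\maximalidealofartin$ is nilpotent. Throughout, the splitting $j$ is fixed. First I would identify the relevant sets. Since $\h^1=\Omega_A^1(B)$ and every $k$-bracket of $\h$ that is not already a bracket of the $L_\infty$-subalgebra $\Omega_A^\bullet(B)$ requires at least one argument of degree $0$ (from $\Der(L)\oplus\Gamma(B)$, cf.\ Proposition~\ref{prop:extended}), the Maurer--Cartan equation of $\h\otimes\maximalidealofartin$ for a degree-$1$ element coincides with that of $\Omega_A^\bullet(B)\otimes\maximalidealofartin$; hence $\MC(\h\otimes\maximalidealofartin)=\MC(\Omega_A^\bullet(B)\otimes\maximalidealofartin)$, which by Proposition~\ref{lem:MCstandard} is exactly ${\rm Sd}(L,A,\v)$. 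Thus $\xi$ and $\eta$ are both standard deformations, and by Proposition~\ref{prop: isom standard deformation} the claim becomes: $\xi$ and $\eta$ lie in a common orbit of the Getzler gauge action of $\h^0\otimes\maximalidealofartin$ on $\MC(\h\otimes\maximalidealofartin)$ if and only if they lie in a common orbit of the action $\triangleright$ of $\saut(L^0_\v)$ on ${\rm Sd}(L,A,\v)$ from~\eqref{Eq:Def of action}.

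The technical core is the identity
\[
 \exp(\delta)\triangleright\xi \;=\; e^{(\delta,0)}\ast\xi ,\qquad \delta\in\Der(L)\otimes\maximalidealofartin,\quad \xi\in\MC(\h\otimes\maximalidealofartin),
\]
where $(\delta,0)$ denotes $\delta$ regarded in $\h^0\otimes\maximalidealofartin$. I would prove it by comparing the two one-parameter families $t\mapsto\exp(t\delta)\triangleright\xi$ and $t\mapsto e^{t(\delta,0)}\ast\xi$, each starting at $\xi$ and each being the integral curve of the fundamental vector field on $\MC(\h\otimes\maximalidealofartin)$ of the corresponding action. On the gauge side this generator is $\eta\mapsto-[(\delta,0)]^\eta_1=-[\delta]_1-[\eta,\delta]_2-\tfrac12[\eta,\eta,\delta]_3$, which by the bracket formulas of Proposition~\ref{prop:extended} together with $I_\eta(a)=a+j(\eta(a))$ evaluates on $a\in\Gamma(A)$ to $\prb(\delta(a))+\prb(\delta(j\eta(a)))-\eta(\pra(\delta(a)))-\eta(\pra(\delta(j\eta(a))))$; on the $\saut(L^0_\v)$ side, differentiating the conjugation $\prb\circ\exp(t\delta)\circ I_\eta\circ(\pra\circ\exp(t\delta)\circ I_\eta)^{-1}$ at $t=0$ gives $\prb\circ\delta\circ I_\eta-\eta\circ\pra\circ\delta\circ I_\eta$, the same operator. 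As $\maximalidealofartin$ is nilpotent, both families are polynomial in $t$, so coincidence of generators forces coincidence of the curves; taking $t=1$ yields the identity. (This also uses that $\triangleright$ is a group action, so that $\exp(t\delta)\triangleright\xi$ is genuinely an integral curve.)

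With the identity in hand, one direction is immediate: if $\eta=\Pi\triangleright\xi$ with $\Pi=\exp(\delta)\in\saut(L^0_\v)$, then $\eta=e^{(\delta,0)}\ast\xi$ is gauge equivalent to $\xi$. Conversely, let $(\delta,b)\in(\Der(L)\oplus\Gamma(B))\otimes\maximalidealofartin$ be a gauge parameter. I claim $(-\operatorname{ad}_{j(b)},b)\in\h^0\otimes\maximalidealofartin$ has vanishing fundamental vector field on $\MC(\h\otimes\maximalidealofartin)$: one has $[(-\operatorname{ad}_{j(b)},b)]_1=0$, since $(\dCE b)(a)=\nabla_a b=\prb([a,j(b)]_L)=-\prb(\operatorname{ad}_{j(b)}(a))=[\operatorname{ad}_{j(b)}]_1(a)$, while the vanishing of the remaining $2$- and $3$-bracket contributions is the compatibility --- built into the extended $L_\infty$-algebra of~\cite{NCCH}, and verifiable directly from Proposition~\ref{prop:extended} --- between the way the inner derivation $\operatorname{ad}_{j(b)}$ enters the brackets of $\h$ and the way the degree-$0$ element $b$ enters the brackets of $\Omega_A^\bullet(B)$. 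Since the fundamental vector field is linear in the gauge parameter, $(\delta,b)$ and $(\delta+\operatorname{ad}_{j(b)},0)$ induce the same flow, so $e^{(\delta,b)}\ast\xi=e^{(\delta+\operatorname{ad}_{j(b)},0)}\ast\xi=\exp(\delta+\operatorname{ad}_{j(b)})\triangleright\xi$, which lies in the $\saut(L^0_\v)$-orbit of $\xi$. This establishes the converse inclusion and completes the proof.

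The main obstacle I anticipate is the generator comparison in the core identity: one must differentiate the conjugation formula~\eqref{Eq:Def of action} carefully --- in particular keeping track of the term arising from $(\pra\circ\exp(t\delta)\circ I_\eta)^{-1}$ --- and match it term by term, with correct signs, against the $\xi$-twisted brackets $[(\delta,0)]^\eta_1$ assembled from Proposition~\ref{prop:extended}, where the cubic-bracket sign conventions are the delicate point. A secondary technical point is making precise the step from equality of infinitesimal generators to equality of the (polynomial, hence ODE-unique) one-parameter families, together with the verification that $\triangleright$ is a genuine left action of $\saut(L^0_\v)$ on ${\rm Sd}(L,A,\v)$.
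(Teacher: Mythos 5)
Your proposal is correct in outline but takes a genuinely different route from the paper. The paper's proof is computational: it sets $x^k=\prav\circ\delta^k\circ I_\xi$, $y^k=\prbv\circ\delta^k\circ I_\xi$, proves the combinatorial identity $y^k=-\sum_p\binom{k}{p}e^p_\xi(\delta)\circ x^{k-p}$ by a lengthy induction (Lemma~\ref{Lemma in appendix}, relegated to the appendix), and then reads off directly that $\Pi=\exp(\delta)$, $\Pi_A=\sum_k x^k/k!$ intertwine $I_\xi$ and $I_\eta$, and conversely. You instead compare the two actions at the level of infinitesimal generators and invoke uniqueness of polynomial integral curves; this replaces the binomial bookkeeping by a single first-order computation (which you carry out correctly: $-[(\delta,0)]^\eta_1=\prb\circ\delta\circ I_\eta-\eta\circ\pra\circ\delta\circ I_\eta$ does match the derivative of the conjugation formula at $t=0$). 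Your approach is more conceptual and, as a bonus, it explicitly addresses a point the paper leaves implicit: the paper simply declares that gauge equivalence "means" there exists $\delta\in\Der(L)\otimes\maximalidealofartin$ with $\eta=e^\delta\ast\xi$, silently discarding the $\Gamma(B)$-component of a general degree-$0$ gauge parameter; your observation that $(-\operatorname{ad}_{j(b)},b)$ acts trivially is exactly what justifies this reduction.

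Two steps of your argument are asserted rather than proved, and you should close them. First, the flow interpretation: the paper defines $e^{b}\ast\xi$ by Getzler's recursion for $e^k_\xi(b)$, not as a time-one flow, so you must either verify that $t\mapsto e^{tb}\ast\xi$ solves $\tfrac{d}{dt}\eta(t)=-[b]^{\eta(t)}_1$ (equivalently, establish the one-parameter group property $e^{(s+t)b}\ast\xi=e^{sb}\ast(e^{tb}\ast\xi)$) or cite this from~\cite{Getzler}; the analogous group-action property of $\triangleright$, which you flag, also needs the short verification that $(\Pi'\Pi)_\xi=\Pi'_{\Pi\triangleright\xi}\circ\Pi_\xi$. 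Second, the vanishing of the $2$- and $3$-bracket contributions of $(-\operatorname{ad}_{j(b)},b)$ cannot be "verified directly from Proposition~\ref{prop:extended}": that proposition only records the brackets involving $\Der(L)$, whereas the cancellation requires the explicit formulas for $[\eta,b]_2$ and $[\eta,\eta,b]_3$ with $b\in\Gamma(B)$ in the basic cubic $L_\infty$-algebra, which live in~\cite{BCSX} and are not reproduced in the paper. The claim is true (and needed even for the paper's own proof to cover general gauge parameters), but as written it is a citation gap rather than a checked computation.
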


We  note that the two Maurer-Cartan elements $\xi$ and $\eta$ are gauge equivalent means that  there exists a nilpotent derivation $\delta \in \Der(L) \otimes\maximalidealofartin$ of $L_\v^0$ such that
\[
\eta= e^{\delta} \ast \xi := -\sum_{k=0}^\infty \frac{1}{k!} e^k_\xi(\delta ),
\]
where $e^k_\xi(\delta ) \in \Omega_A^1(B)\otimes \maximalidealofartin, k\geqslant 0$ are inductively defined by $e^0_\xi(\delta)=-\xi$,
\begin{equation}\label{Eqt:e1xib0}
	e^1_\xi(\delta )=d_1(\delta) - [\delta,\xi]_2 +\frac{1}{2}[\delta,\xi,\xi]_3
\end{equation}
and
\begin{equation}\label{Eqt:enxib0}
	e^{k+1}_\xi(\delta )=[\delta, e^k_\xi(\delta)]_2 - [\delta, \xi,e^k_\xi(\delta)]_3 + \frac{1}{2}\sum_{\begin{subarray}{c}k_1+k_2=k\\k_1\geqslant 1,k_2\geqslant 1
	\end{subarray}} \frac{k!}{k_1! k_2!}[\delta, e^{k_1}_\xi(\delta ), e^{k_2}_\xi(\delta )]_3.
\end{equation}

Then we introduce two family of maps
\begin{align*}
x^k\colon &\Gamma(A_\v)\to \Gamma(A_\v),  & x^k(a) &:= (\prav \circ \delta^k \circ I_\xi) (a), \\
y^k\colon &\Gamma(A_\v)\to \Gamma(B_\v),  & y^k(a) &:= (\prbv  \circ\delta^k \circ I_\xi)(a),
\end{align*}
for all $k \geqslant 0$  and all $a \in \Gamma(A_\v)$. These maps satisfy the following key lemma, whose proof will be given in Appendix~\ref{SubsubSec:lemmaofgaugeequivalence}.
\begin{lem}\label{Lemma in appendix}
The  above maps $x^k$ and $y^k$ (for all $k \geqslant 0$) are related by the following relation
 \[
 	y^k = -\sum_{p=0}^k \tbinom{k}{p}e^p_\xi(\delta)\circ x^{k-p}, \quad \mbox{ as a map } \Gamma(A_\v)\to \Gamma(A_\v).
 \]
 \end{lem}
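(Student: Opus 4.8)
The plan is to prove the asserted identity by induction on $k$. The base case $k=0$ is immediate: because $I_\xi(a)=i(a)+j(\xi(a))$ and the chosen splitting $j$ satisfies $\prav\circ i=\id$, $\prav\circ j=0$, $\prbv\circ i=0$, $\prbv\circ j=\id$, one gets $x^0=\id_{A_\v}$ and $y^0=\xi$, while $e^0_\xi(\delta)=-\xi$; hence the right-hand side is $-e^0_\xi(\delta)\circ x^0=\xi=y^0$.

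For the inductive step, write $\delta^{k+1}\circ I_\xi=\delta\circ(\delta^k\circ I_\xi)$ and use the decomposition $\Gamma(L_\v)=\Gamma(A_\v)\oplus j(\Gamma(B_\v))$ induced by $j$ to record $\delta^k(I_\xi(a))=x^k(a)+j(y^k(a))$; applying $\prav$ and $\prbv$ produces the transfer relations
\begin{align*}
x^{k+1}(a) &= \prav(\delta(x^k(a))) + \prav(\delta(j(y^k(a)))), \\
y^{k+1}(a) &= \prbv(\delta(x^k(a))) + \prbv(\delta(j(y^k(a)))).
\end{align*}
The four operators appearing here are then expressed through the structure maps of the extended cubic $L_\infty$-algebra $\h$ listed in Proposition~\ref{prop:extended}: for $a'\in\Gamma(A_\v)$ and $b'\in\Gamma(B_\v)$ one has $\prbv(\delta(a'))=-[\delta]_1(a')$ and $\prbv(\delta(j(b')))=[\delta,b']_2$, whereas $\prav(\delta(a'))=(\prav\circ\delta\circ i)(a')$ and $\prav(\delta(j(b')))=(\prav\circ\delta\circ j)(b')$ are precisely the operators that occur inside the $2$- and $3$-bracket formulas of $\h$; in particular the first relation becomes $x^{k+1}=(\prav\circ\delta\circ i)\circ x^k+(\prav\circ\delta\circ j)\circ y^k$.

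Next I would substitute the inductive hypothesis $y^k=-\sum_{p=0}^{k}\binom{k}{p}e^p_\xi(\delta)\circ x^{k-p}$ into the expression for $y^{k+1}$, expand the $2$-bracket by $\v$-linearity, rewrite $\prbv(\delta(j(e^p_\xi(\delta)(a'))))=[\delta,e^p_\xi(\delta)]_2(a')+e^p_\xi(\delta)((\prav\circ\delta\circ i)(a'))$ using Proposition~\ref{prop:extended}(2), then reinsert the first transfer relation to turn $(\prav\circ\delta\circ i)\circ x^{k-p}$ into $x^{k+1-p}-(\prav\circ\delta\circ j)\circ y^{k-p}$ and apply the hypothesis once more to $y^{k-p}$. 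The outcome is $y^{k+1}$ written as a sum of four families of terms carrying explicit binomial weights: $-[\delta]_1\circ x^k$; the terms $-\binom{k}{p}[\delta,e^p_\xi(\delta)]_2\circ x^{k-p}$; the terms $-\binom{k}{p}e^p_\xi(\delta)\circ x^{k+1-p}$; and the quadratic terms $-\binom{k}{p}\binom{k-p}{q}\,e^p_\xi(\delta)\circ(\prav\circ\delta\circ j)\circ e^q_\xi(\delta)\circ x^{k-p-q}$.

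It remains to match this against the target $-\sum_{m=0}^{k+1}\binom{k+1}{m}e^m_\xi(\delta)\circ x^{k+1-m}$, and this is the delicate part of the argument. Using $\binom{k+1}{m}=\binom{k}{m}+\binom{k}{m-1}$, the third family above supplies the $\binom{k}{m}$-part at once, and the recursions~\eqref{Eqt:e1xib0}--\eqref{Eqt:enxib0} for $e^{m+1}_\xi(\delta)$ — together with the explicit $3$-bracket formula of Proposition~\ref{prop:extended}(3) for $[\delta,\xi,e^m_\xi(\delta)]_3$ and $[\delta,e^{m_1}_\xi(\delta),e^{m_2}_\xi(\delta)]_3$ — must reorganize the remaining three families into the $\binom{k}{m-1}$-part $-\sum_{m=0}^{k}\binom{k}{m}e^{m+1}_\xi(\delta)\circ x^{k-m}$. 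Two features make the bookkeeping work: the slice $m=0$ has to be handled separately, since $e^1_\xi(\delta)$ obeys the special recursion~\eqref{Eqt:e1xib0} carrying the additional summand $d_1(\delta)=[\delta]_1$ — exactly the term that absorbs the stray $-[\delta]_1\circ x^k$; and the quadratic $3$-bracket terms are symmetric in their two $\Omega^\bullet_A(B)$-slots, so that the factor $\tfrac12$ in~\eqref{Eqt:enxib0} together with the identity $\binom{k}{p+q}\tfrac{(p+q)!}{p!\,q!}=\binom{k}{p}\binom{k-p}{q}$ reconciles the weights slot by slot. I do not expect the Maurer--Cartan equation for $\xi$ to enter this particular computation: the identity should follow formally from the recursive definitions of the $e^k_\xi(\delta)$ and from the structure maps of $\h$.
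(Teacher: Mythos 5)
Your proposal is correct and follows essentially the same route as the paper's proof: induction on $k$, the same transfer relations $x^{k+1}=\prav\circ\delta\circ(x^k+j\circ y^k)$ and $y^{k+1}=\prbv\circ\delta\circ(x^k+j\circ y^k)$, expansion of the recursions \eqref{Eqt:e1xib0}--\eqref{Eqt:enxib0} via the explicit bracket formulas of Proposition~\ref{prop:extended}, separate treatment of the $e^1_\xi(\delta)$ slice, and the same binomial bookkeeping (Pascal's rule together with $\binom{k}{p+q}\frac{(p+q)!}{p!\,q!}=\binom{k}{p}\binom{k-p}{q}$). The only difference is cosmetic --- you expand $y^{k+1}$ forward and match it to the target, whereas the paper expands $\sum_m\binom{k}{m}e^{k+1-m}_\xi(\delta)\circ x^m$ and recovers $-y^{k+1}$ plus the correction terms --- and your observation that the Maurer--Cartan equation is not needed is likewise consistent with the paper.
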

With the help of this lemma, we now prove Proposition \ref{Lem:gaugeEquivalent}.
\begin{proof}[Proof of Proposition \ref{Lem:gaugeEquivalent}] \emph{To see the necessity}, assume that two Maurer-Cartan elements $\xi$ and $\eta$ are gauge equivalent and let $\delta$, $x^k$, and $y^k$ be as earlier. Then we define a small automorphism of the $\v$-Cartesian extension $L^0_\v$ of the Lie algebroid $L$ by
\[
  \Pi:= \exp(\delta) = \sum_{k=0}^\infty \dfrac{\delta^k}{k!}\colon L^0_\v \to L^0_\v,
\]
and a small automorphism of the vector bundle $A_\v$ by
\[
 \Pi_A :=   \sum_{k=0}^\infty \frac{x^k}{k!} \colon A_\v \to A_\v .
\]
We claim that $(\Pi, \Pi_A)$ gives an isomorphism from $I_\xi$ to $ I_\eta$, i.e., the following commutative diagram
 \begin{equation}\label{Diag:PhiPhiA}
	\begin{tikzcd}
		L^0_\v  & L^0_\v \arrow[l,"{\Pi}"'] \\
		({A_\v}, [\cdot,\cdot]_{A_\v}^\eta, \rho_{A_\v}^\eta) \arrow[u,"{I_\eta}"]     & ({A_\v }, [\cdot,\cdot]_{A_\v}^\xi, \rho_{A_\v}^\xi) \arrow[l,"{\Pi_A}"]\arrow[u,"{I_\xi}"']
	\end{tikzcd}
\end{equation}
commutes in the category of $\v$-ringed Lie algebroids.
In fact, using Lemma~\ref{Lemma in appendix},  we have
\begin{align*}
\Pi \circ I_\xi &=\sum_{k=0}^\infty \frac{1}{k!}x^k + j \circ \sum_{k=0}^\infty \frac{1}{k!} y^k\\
&=\Pi_A- j \circ \sum_{k=0}^\infty \frac{1}{k!}\Big( \sum_{q=0}^k\tbinom{k}{q}e^q_\xi(\delta )\circ x^{k-q}\Big)  \\
&=\Pi_A-j\circ \Big(\sum_{p=0}^\infty \frac{1}{p!}e^p_\xi(\delta ) \Big)\circ
\Big( \sum_{k=0}^\infty \frac{1}{k!}x^k\Big)\\
&=\Pi_A + j \circ \eta \circ \Pi_A = I_\eta \circ\Pi_A,
\end{align*}
which implies that Diagram \eqref{Diag:PhiPhiA} commutes in the category of $\v$-ringed vector bundles.
Meanwhile, since we have
\begin{align*}
\Pi_A([a_1,a_2]_A^\xi) &=(\prav  \circ \Pi \circ I_\xi)([a_1,a_2]_A^\xi)\\
&=\prav  \Big([(\Pi\circ I_\xi)(a_1), (\Pi \circ I_\xi)(a_2)]_{L_\v^0}\Big)\\
&=\prav  \Big([(I_{\eta}\circ\Pi_A)(a_1), (I_\eta \circ\Pi_A)(a_2)]_{L_\v^0}\Big)\\
&=(\prav \circ I_\eta) \Big([\Pi_A(a_1),\Pi_A(a_2)]_A^\eta\Big)\\
&=[\Pi_A(a_1),\Pi_A(a_2)]_A^\eta,
\end{align*}
and
\[
(\rho_A^\eta\circ\Pi_A)(a)= (\rho_{L_\v^0}\circ I_{\eta} \circ \Pi_A)(a) = (\rho_{L_\v^0} \circ \Phi \circ I_{\xi})(a) = (\rho_{L_\v^0} \circ I_{\xi})(a) = \rho_A^\xi(a),
\]
for any $a_1,a_2, a \in \Gamma(A_\v)$, it follows that Diagram \eqref{Diag:PhiPhiA} indeed commutes in the category of $\v$-ringed Lie algebroids. Hence, $I_\xi$ and $I_\eta$ are isomorphic.

\emph{Conversely, to prove sufficiency}, assume that the standard deformations $I_\xi$ and $I_\eta$ are isomorphic.
By Proposition~\ref{prop: isom standard deformation}, there exists a small automorphism $\Pi \in \saut (L^0_\v)$ of the $\v$-Cartesian extension $L^0_\v$ of $L$ such that
\[
\eta = \Pi \triangleright \xi = \prbv\circ \Pi \circ I_{\xi} \circ\Pi_A^{-1},
\]
where $\Pi_A^{-1}$ is the inverse of $\Pi_A := \prav\circ\Pi\circ I_{\xi}$.

By Proposition~\ref{prop-aut}, we may assume that $\Pi = \exp(\delta)$ for some $\delta \in \Der(L) \otimes \maximalidealofartin$.
Using Lemma~\ref{Lemma in appendix}, we have
\begin{align*}
\eta\circ\Pi_A &= \prbv\circ \Pi \circ I_{\xi} = \prbv \circ \exp(\delta) \circ I_{\xi} = \sum_{k=0}^{\infty} \frac{1}{k!} \prbv\circ \delta^k \circ I_{\xi} =
\sum_{k=0}^\infty \frac{1}{k!}y^k \\
&=-\sum_{k=0}^\infty \frac{1}{k!}\Big(\sum_{q=0}^k\tbinom{k}{q}e^q_\xi(\delta )\circ x^{k-q}\Big) =-\Big( \sum_{p=0}^\infty \frac{1}{p!} e^p_\xi(\delta ) \Big)\circ\Big( \sum_{k=0}^\infty \frac{1}{k!}x^k \Big) \\
&= -\Big( \sum_{p=0}^\infty \frac{1}{p!} e^p_\xi(\delta ) \Big) \circ \Pi_A.
\end{align*}
Since $\Pi_A$ is a small automorphism of the vector bundle $A_\v$, we obtain
\[
\eta=-\sum_{p=0}^\infty \frac{1}{p!}e^p_\xi(\delta )=e^\delta \ast \xi,
\]
which implies that $\eta$ and $\xi$ are gauge equivalent.
\end{proof}

We are now in a position to prove the main theorem.
\begin{proof}[Proof of Theorem \ref{Thm:MainTheorem}]
By Proposition~\ref{Lem:gaugeEquivalent}, the assignment
\[
 \gamma_\v \colon   {\rm Def}_{\h}(\v) \to {\infDefFctLA}(\v)
\]
is well-defined.
Given any morphism $\vartheta \colon \v \to \v'$ of local Artinian $\K$-algebras,
it is easy to see that the following diagram
\[
\begin{tikzcd}
 {\rm Def}_{\h}(\v)  \ar{r}{\gamma_\v} \ar{d}{{\rm Def}_{\h}(\vartheta)} &   {\infDefFctLA}(\v) \ar{d}{\infDefFctLA(\vartheta)} \\
 {\rm Def}_{\h}(\v') \ar{r}{\gamma_{\v^\prime}}  & {\infDefFctLA}(\v')
\end{tikzcd}
\]
commutes. Therefore, $\gamma$ is indeed a natural transformation.
By Proposition~\ref{Lem:gaugeEquivalent}, the natural transformation $ \gamma_\v $ is   injective for any Artinian algebra $\v$. It follows from Proposition \ref{Lem:standard} that any infinitesimal deformation is realized by some  Maurer-Cartan element. So the natural isomorphism $\gamma_\v$ is also surjective.
\end{proof}

Note that,  the extension of the basic $L_\infty$-algebra $\Omega^\bullet_A(B)$ by the Lie algebra $\operatorname{IDer}(L)$ of inner derivations of $L$, denoted by
  \[
  \h_0 := (\operatorname{IDer}(L) \oplus \Gamma(B)) \bigoplus \left(\bigoplus_{n\geqslant 1}\Omega_A^n(B)\right)
  \]
  is an $L_\infty$-subalgebra of $\h$.
By a similar argument as in the proof of Theorem~\ref{Thm:MainTheorem}, we obtain the following
\begin{Thm}\label{Thm: semistrict}
The algebraic deformation functor ${\rm Def}_{\h_0}$ associated to the cubic $L_\infty$-algebra $\h_0$ is naturally isomorphic to the semistrict deformation functor $\operatorname{sDef}_{(L,A)}$ of the Lie pair $(L,A)$.
\end{Thm}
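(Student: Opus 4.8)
The plan is to mirror the proof of Theorem~\ref{Thm:MainTheorem} line by line, replacing $\h$ by its $L_\infty$-subalgebra $\h_0$, the group $\saut(L_\v^0)$ by $\operatorname{sIAut}(L_\v^0)$, and ``weak isomorphic'' by ``semistrict isomorphic'' throughout. First I would observe that $\h_0$ and $\h$ share the same degree-$1$ component $\Omega_A^1(B)$, and that the Maurer--Cartan equation for a degree-$1$ element only involves the brackets $[-]_1,[-,-]_2,[-,-,-]_3$ evaluated on $\Omega_A^\bullet(B)$, the output lying in $\Omega_A^2(B)$, where neither $\Der(L)$ nor $\operatorname{IDer}(L)$ contributes. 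Hence $\MC(\h_0\otimes\maximalidealofartin)=\MC(\h\otimes\maximalidealofartin)$, and by Proposition~\ref{lem:MCstandard} this set is identified, via the fixed splitting $j\colon B\to L$, with ${\rm Sd}(L,A,\v)$. One then defines, for each $\v\in\ArtCat$, the map $\gamma^0_\v\colon {\rm Def}_{\h_0}(\v)\to \sinfDefFctLA(\v)$ sending a gauge class $[\xi]$ to the semistrict isomorphism class of the standard deformation $I_\xi$, equivalently to the $\operatorname{sIAut}(L_\v^0)$-orbit of $\xi$ in ${\rm Sd}(L,A,\v)$ (Proposition~\ref{prop: isom standard deformation}).

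The heart of the argument is the semistrict analogue of Proposition~\ref{Lem:gaugeEquivalent}: two elements $\xi,\eta\in\MC(\h_0\otimes\maximalidealofartin)$ are gauge equivalent in $\h_0\otimes\maximalidealofartin$ if and only if $I_\xi$ and $I_\eta$ are semistrict isomorphic. For the ``only if'' direction, a gauge parameter in $(\operatorname{IDer}(L)\oplus\Gamma(B))\otimes\maximalidealofartin$ has derivation part $\delta\in\operatorname{IDer}(L)\otimes\maximalidealofartin$; since $\operatorname{IDer}(L)\subset\Der(L)$, the inductive formulas~\eqref{Eqt:e1xib0}--\eqref{Eqt:enxib0} and Lemma~\ref{Lemma in appendix} apply verbatim, and the small automorphism $\Pi=\exp(\delta)$ produced in the proof of Proposition~\ref{Lem:gaugeEquivalent} now lies in $\operatorname{sIAut}(L_\v^0)$ by the very definition of a small inner automorphism, so the commutative diagram~\eqref{Diag:PhiPhiA} exhibits a semistrict isomorphism. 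For the ``if'' direction, a semistrict isomorphism is by Definition~\ref{Def:full-and-semi-iso} implemented by $\Pi=\exp(\delta)$ with $\delta\in\operatorname{IDer}(L)\otimes\maximalidealofartin$, and the computation of the proof of Proposition~\ref{Lem:gaugeEquivalent} (again via Lemma~\ref{Lemma in appendix}) shows $\eta=e^\delta\ast\xi$, a gauge equivalence taking place inside $\h_0$.

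Granting this, the remaining steps are formal. The map $\gamma^0_\v$ is well defined and injective by the semistrict analogue just stated; naturality in $\v$ holds because, for a morphism $\vartheta\colon\v\to\v'$ of Artinian algebras, passing to $\v$-linear extensions commutes with taking standard realizations, exactly as in the proof of Theorem~\ref{Thm:MainTheorem}. Surjectivity is immediate from Proposition~\ref{Lem:standard}: any infinitesimal deformation of $(L,A)$ is strictly, hence semistrictly, isomorphic to a standard deformation $I_\xi$ with $\xi\in{\rm Sd}(L,A,\v)=\MC(\h_0\otimes\maximalidealofartin)$, so every class in $\sinfDefFctLA(\v)$ lies in the image of $\gamma^0_\v$. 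Therefore $\gamma^0$ is a natural isomorphism ${\rm Def}_{\h_0}\Rightarrow\sinfDefFctLA$.

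The only step requiring genuine care is confirming that the gauge-theoretic machinery of Proposition~\ref{Lem:gaugeEquivalent} really does restrict to $\h_0$: one must check that every structure map invoked in~\eqref{Eqt:e1xib0}--\eqref{Eqt:enxib0} and in Lemma~\ref{Lemma in appendix} stays within the sub-$L_\infty$-algebra $\h_0$ (so that $e^k_\xi(\delta)$ is computed using only brackets of $\h_0$ when $\delta$ is inner), and that conversely a semistrict isomorphism yields a gauge parameter lying in the degree-$0$ part of $\h_0$ rather than merely in that of $\h$. Both follow from $\h_0$ being closed under all brackets of $\h$, together with Proposition~\ref{prop-aut} specialized to inner derivations, so no analytic difficulty arises beyond those already handled for Theorem~\ref{Thm:MainTheorem}.
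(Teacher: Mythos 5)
Your proposal is correct and follows exactly the route the paper intends: the paper proves Theorem~\ref{Thm: semistrict} only by asserting that the argument of Theorem~\ref{Thm:MainTheorem} carries over, and your write-up supplies precisely that substitution ($\h\rightsquigarrow\h_0$, $\saut(L_\v^0)\rightsquigarrow\operatorname{sIAut}(L_\v^0)$, weak $\rightsquigarrow$ semistrict), together with the two checks that actually need verifying — that $\MC(\h_0\otimes\maximalidealofartin)=\MC(\h\otimes\maximalidealofartin)={\rm Sd}(L,A,\v)$ since the Maurer--Cartan equation lives entirely in $\Omega_A^\bullet(B)$, and that the gauge machinery of Proposition~\ref{Lem:gaugeEquivalent} and Lemma~\ref{Lemma in appendix} restricts to inner derivations. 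No gaps.
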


As a consequence, we have the following corollary:
\begin{Cor}\label{Cor:3identifications}
The tangent space ${\infDefFctLA}(\K_2[t])$ (resp. $\operatorname{sDef}_{(L,A)}(\K_2[t])$) is  isomorphic to the tangent cohomology $\H^1(\h, [-]_1)$ (resp.  $\H^1(\h_0, [-]_1)$).
\end{Cor}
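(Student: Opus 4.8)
The plan is to derive Corollary~\ref{Cor:3identifications} directly from Theorems~\ref{Thm:MainTheorem} and~\ref{Thm: semistrict} together with the computation of the tangent space of an algebraic deformation functor carried out in Example~\ref{Rem:tangentspace}. Recall that a natural isomorphism of functors $\ArtCat \to \textbf{Set}$ evaluated at any fixed object is a bijection of sets; applying this to the algebra $\K_2[t]$ of dual numbers is exactly what is needed.

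First I would invoke Theorem~\ref{Thm:MainTheorem}, which gives a natural isomorphism $\gamma \colon {\rm Def}_{\h} \Rightarrow \infDefFctLA$. Evaluating at $\v = \K_2[t]$ yields a bijection $\gamma_{\K_2[t]} \colon {\rm Def}_{\h}(\K_2[t]) \xrightarrow{\ \sim\ } \infDefFctLA(\K_2[t])$. Next, by Example~\ref{Rem:tangentspace} applied to the cubic $L_\infty$-algebra $\g = \h$, we have
\[
  {\rm Def}_{\h}(\K_2[t]) \cong \H^1(\h, [-]_1),
\]
since $\MC(\h \otimes \m_{\K_2[t]}) \cong \{\xi \otimes t \mid \xi \in \h^1,\ [\xi]_1 = 0\}$ and the gauge action of $b \in \h^0 \otimes \m_{\K_2[t]}$ is $e^b \ast \xi = \xi - [b]_1$, so that the quotient is precisely $\ker [-]_1 / \operatorname{im} [-]_1$ in degree $1$. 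Composing these two identifications gives ${\infDefFctLA}(\K_2[t]) \cong \H^1(\h, [-]_1)$. The argument for the semistrict case is verbatim the same, replacing Theorem~\ref{Thm:MainTheorem} by Theorem~\ref{Thm: semistrict}, $\h$ by $\h_0$, and $\infDefFctLA$ by $\sinfDefFctLA$: evaluation of the natural isomorphism ${\rm Def}_{\h_0} \Rightarrow \sinfDefFctLA$ at $\K_2[t]$ composed with ${\rm Def}_{\h_0}(\K_2[t]) \cong \H^1(\h_0, [-]_1)$ yields the claim.

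There is no genuine obstacle here; the statement is a formal consequence of results already established. The only point deserving a sentence of care is that the isomorphism ${\rm Def}_{\h}(\K_2[t]) \cong \H^1(\h, [-]_1)$ of Example~\ref{Rem:tangentspace} is not merely a bijection of sets but respects the $\K$-vector space structure, so that the "tangent space" terminology is justified; since $\gamma_{\K_2[t]}$ transports this structure across, $\infDefFctLA(\K_2[t])$ inherits a canonical $\K$-vector space structure as well. I would state the corollary's proof in two or three lines, simply chaining the bijections, and remark that naturality is not even required for the statement as written (though it makes the identification canonical).
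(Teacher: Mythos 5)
Your proposal is correct and follows exactly the route the paper intends: the corollary is stated there as an immediate consequence of Theorems~\ref{Thm:MainTheorem} and~\ref{Thm: semistrict} combined with the computation of ${\firstDef}(\K_2[t]) \cong \H^1(\g,[-]_1)$ in Example~\ref{Rem:tangentspace}, which is precisely the chain of bijections you write down. Your additional remark that the identification transports the $\K$-vector space structure is a sensible clarification but not a departure from the paper's argument.
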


\begin{Rem}
A natural relation between infinitesimal deformations of a Lie pair $(L,A)$
and infinitesimal deformations of the Lie algebroid $A$ studied in~\cite{Crainic2008} can be inferred.
In fact, there exists a morphism $\phi = \{\phi_1,\phi_2\}$ of $L_\infty$-algebras from the cubic $L_\infty$-algebra $\h$ (or its $L_\infty$-subalgebra $\h_0$) to the dg Lie algebra $C^\bullet_{\mathrm{def}}(A)$, called the \textit{deformation complex} of $A$, which controls the infinitesimal deformations of $A$.
In particular, for every Maurer-Cartan element $\xi \in \MC(\h \otimes \m_{\K_2[t]})$, this $L_\infty$-morphism $\phi$ sends a weak (or semistrict) isomorphism class of infinitesimal deformations of $(L,A)$ represented by $[\xi] \in \H^1(\h,[-]_1)$  (or $\H^1(\h_0, [-]_1)$) to an infinitesimal deformation of $A$ represented by $[\phi(\xi)] \in \H^{1}_{\rm def}(A)$.
\end{Rem}
\subsection{Examples}
\subsubsection{Lie algebra pairs}
Here we compare two types of deformations of a Lie algebra pair --- the one defined in the present paper and the one introduced by Crainic-Sch\"{a}tz-Struchiner in \cite{Crainic2014}.

Let $\mathfrak{l}$ be a Lie algebra and $\a \subset \mathfrak{l}$  a Lie subalgebra. Suppose that $\mathfrak{a}$ is of dimension  $k$. Denote by ${\rm Gr}_k(\mathfrak{l})$ the Grassmannian manifold of $k$-dimensional subspaces of $\mathfrak{l}$. Following ~\cite{Crainic2014}, a \textit{deformation of the Lie subalgebra} $\a$ inside $\mathfrak{l}$ is a smooth curve $\a_t \in C^\infty([0,1],{\rm Gr}_k(\mathfrak{l}))$ such that $\a_0=\a$ and $\a_t$ are Lie subalgebras of $\mathfrak{l}$ for all $t \in [0,1]$. Two deformations $\a_t$ and $\tilde{\a}_t$
of $\a$ are said to be \textit{isomorphic} if there exists a smooth curve $g_t, t \in [0,1]$ in the connected and simply connected Lie group $G$ integrating $\mathfrak{l}$, such that $g_0$ is the identity of $G$ and $\tilde{\a}_t = {\rm Ad}_{g_t} \a_t$.
It can be verified that the set of isomorphism classes of deformations of $\a$ is isomorphic to the tangent space $\operatorname{sDef}_{(\mathfrak{l},\a)}(\K_2[t])$ of the semistrict deformation functor of the Lie pair $(\mathfrak{l},\a)$, and also isomorphic to the first Chevalley-Eilenberg cohomology $\H^{1}_{\rm CE}(\a,\mathfrak{l}/\a)$ of the $\a$-module $\mathfrak{l}/\a$.

On the other hand, by Theorem \ref{Thm:MainTheorem},  weak isomorphism classes of infinitesimal deformations of the Lie pair $(\mathfrak{l},\a)$ is controlled by the cubic $L_\infty$-algebra $\h = \Der(\mathfrak{l})\oplus \Hom (\Lambda^{\bullet} \a,\mathfrak{l}/\a)$).
In particular, by Corollary~\ref{Cor:3identifications}, we have the following identifications \[
{\rm wDef}_{(\mathfrak{l},\a)}(\K_2[t]) \cong {\rm Def}_{\h}(\K_2[t])
\cong \H^1(\h,[-]_1).
\]
In general, the two  cohomology spaces $\H^{1}_{\rm CE}(\a,\mathfrak{l}/\a)$ and $\H^1(\h,[-]_1)$ are different.
For example, let $\mathfrak{l} = \mathfrak{b}(3,\K)$ be the $6$-dimensional Lie algebra consisting of  $3\times 3$ upper triangular matrices. Consider a $3$-dimensional Lie subalgebra $\a$ of $\mathfrak{l}$  generated by $e_{11}, e_{12}$ and $e_{13}$.  Here $e_{ij}$ represents the $3\times 3$ matrix with  $1$ in the $(i,j)$-entry and zeros elsewhere. We have
	\[
	[e_{11},e_{12}]=e_{12},\quad [e_{11},e_{13}]=e_{13},\quad [e_{12},e_{13}]=0.
	\] 	
By direct computations, one obtains $\H^1_{\CE}(\a,\mathfrak{l}/\a) \cong \K^3$ and
$\H^1(\h,[-]_1) \cong \K^2$.

However, if $\mathfrak{l}$ is semisimple, then all derivations of $\mathfrak{l}$ are inner. In this case, the two deformation functors $\operatorname{wDef}_{(\mathfrak{l},\a)}$ and $\operatorname{sDef}_{(\mathfrak{l},\a)}$ are isomorphic.

\subsubsection{Extensions of Lie algebroids}
We now consider a particular example from a construction of extensions of Lie algebroids in \cite{Mehta-Zambon}.

Let $(M, \pi)$ be a Poisson manifold. Then the cotangent bundle $T^* M$ is a Lie algebroid with the anchor $\pi^\sharp \colon T^*M \to TM$ and the Lie bracket $[-,-]_{T^\ast M}$ defined by
\[
[\alpha,\beta]_{T^* M} = \mathcal{L}_{\pi^\sharp(\alpha)} \beta-\mathcal{L}_{\pi^\sharp(\beta)} \alpha-d\pi(\alpha,\beta),
\]
for all $\alpha, \beta\in \Omega^1(M)$.
Given a Poisson vector field $V$ (i.e.  $\mathcal{L}_V \pi=0$), there is an extension of Lie algebroids on $T^*M \oplus (M \times \mathbb{R})$, where the anchor map is defined by $\rho_V(\alpha+f) = \pi^\sharp(\alpha) + fV$ for all $\alpha \in \Omega^\bullet (M)$ and $f \in C^\infty(M, \mathbb{R})$, and the Lie bracket is defined by
\[
[\alpha+f,\beta+g]_V :=[\alpha,\beta]_{T^* M} + \pi^\sharp(\alpha)g - \pi^\sharp(\beta)f + f\mathcal{L}_{V}\beta - g\mathcal{L}_{V}\alpha+fV(g)-gV(f),
\]
for all $\alpha,\beta \in \Omega^1(M)$ and $f, g\in C^\infty(M)$.
It follows that $(L = T^*M \oplus (M\times \mathbb{R}), A=T^*M)$ is a Lie pair with $B = L/A \cong M \times \mathbb{R}$.

Suppose that $V$ is the Hamiltonian vector field of a smooth function $\phi \in C^\infty(M)$, i.e.   $V=\pi^\sharp(d\phi)$. Let us analyze the tangent space  ${\infDefFctLA}(\K_2[t]) = \H^1(\h,  [-]_1) $ of the weak deformation functor ${\infDefFctLA}$ of this Lie pair.
First of all, note that the set $\operatorname{MC}(\h)$ of Maurer-Cartan element of the associated cubic $L_\infty$-algebra $\h$ is indeed the set of Poisson vector fields on $M$.
In fact, any Poisson vector field $Y$ induces a derivation $\delta^Y\in  \Der (T^*M\oplus (M\times \R))$ with zero symbol defined by
\[
\delta^Y (\alpha+f):= -\langle Y,\alpha\rangle d\phi+\langle Y,\alpha\rangle-f   Y(\phi)d\phi,
\]
for all $\alpha + f \in \Gamma(T^\ast M \oplus (M \times \R))$.
One can directly examine that $[\delta^Y]_1=-Y$, which implies that $\H^1(\h,  [-]_1) = 0$.
Hence, every infinitesimal deformation (parameterized by $\K_2[t]$) of this particular Lie pair $(T^*M\oplus (M\times \mathbb{R}),T^*M)$ arising from a Hamiltonian vector field  is \textit{trivial}.

\section{Infinitesimal deformations of matched Lie pairs}\label{Sec:thirdpartmatchedpair}
Let $(L,A)$ be a matched Lie pair, i.e., the short exact sequence~\eqref{Eq: SES} admits a canonical splitting such that $B$ is also Lie subalgebroid of $L$. We denote such a matched Lie pair by $A \bowtie B$.
In this section, we study infinitesimal deformations of this particular type of Lie pairs.
\subsection{The deformation functor}Consider the subgroup $\operatorname{hAut}(L_\v^0)$ of the group $\saut (L_\v^0)$ of small automorphisms of $L_\v^0$ defined by
\[
  \operatorname{hAut}(L_\v^0)  := \{\exp(\operatorname{L}_b) \mid b \in \Gamma(B \otimes \maximalidealofartin)\}.
\]
We elect the notation ``$\operatorname{hAut}$'' because it arises from `half' of  the collection of inner derivations $\operatorname{L}_l:=[l,-]_L$ for $l\in \Gamma(L)\otimes \maximalidealofartin=\Gamma(A\oplus B)\otimes \maximalidealofartin$.
\begin{Def}\label{Def:half-iso}
Two infinitesimal deformations of a matched Lie pair $L = A \bowtie B$
\[
([-,-]_{A_\v} ,\rho_{A_\v}; I, \lambda^\ast)\quad \text{and}\quad ([-,-]^\prime_{A_\v} ,\rho^\prime_{A_\v}; I^\prime, \lambda^{\prime\ast})
\]
are said to be isomorphic if there exists an element $\exp(\operatorname{L}_b) \in \operatorname{hAut}(L_\v^0)$ and an $\v$-ringed Lie algebroid morphism $(\Pi_A,\lambda_A^\ast)$ from $(A_\v, [-,-]^\prime_{A_\v} ,\rho^\prime_{A_\v})$ to $(A_\v, [-,-]_{A_\v} ,\rho_{A_\v})$ whose center is the identity of $A$ such that the following diagram
\[
\begin{tikzcd}
 L^0_\v & L^0_\v \arrow[l, "{\exp(\operatorname{L}_b)}"'] \\
    (A_\v, \rho_{A_\v}, [-, -]_{A_\v}) \arrow[u,"{(I,\lambda^\ast)}"]     & (A_\v, \rho^\prime_{A_\v}, [-, -]^\prime_{A_\v}) \arrow[l,"{(\Pi_A,\lambda_A^\ast)}"'] \arrow[u,"{(I',\lambda^{\prime\ast})}"']
\end{tikzcd}
\]
commutes.

The assignment for each local Artinian $\K$-algebra $\v$  the set  $\operatorname{hDef}_{A \bowtie B}(\v)$  of isomorphism classes of infinitesimal deformations of the matched Lie pair $L = A \bowtie B$ parameterized by $\v$ determines a functor
\[
  \operatorname{hDef}_{A \bowtie B} \colon  \ArtCat \to \textbf{Set},
\]
called the deformation functor of the matched Lie pair $L = A \bowtie B$.
\end{Def}

Note that,  the basic $L_\infty$-algebra $\Omega_A^\bullet(B)$ of the matched Lie pair $L = A \bowtie B$ degenerates to a canonical dg Lie algebra, since its third bracket $[-,-,-]_3$ vanishes in this case.
 Here is the main theorem.
\begin{Thm}\label{Thm: matched pair}
The deformation functor $ \operatorname{hDef}_{A \bowtie B} $ of the matched Lie pair $L = A \bowtie B$ is isomorphic to the  algebraic deformation functor associated to the dg Lie algebra $\Omega_A^\bullet(B)$.
\end{Thm}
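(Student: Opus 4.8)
The strategy is to mimic the proof of Theorem~\ref{Thm:MainTheorem}, replacing the extended cubic $L_\infty$-algebra $\h$ by the basic dg Lie algebra $\basic = \Omega_A^\bullet(B)$ and the automorphism group $\saut(L_\v^0)$ by its subgroup $\operatorname{hAut}(L_\v^0)$. Fix once and for all the canonical splitting $j \colon B \to L$; since $B$ is a Lie subalgebroid, $j$ is a Lie algebroid inclusion, every $\operatorname{L}_b = \operatorname{ad}_b$ with $b \in \Gamma(B)$ is an inner derivation of $L$, and $b \mapsto \operatorname{L}_b$ is a Lie algebra morphism from the degree-zero part $(\Gamma(B), [-,-]_2)$ of $\basic$ into $\Der(L)$, where $[b_1,b_2]_2 = \prb([j(b_1),j(b_2)]_L) = [b_1,b_2]_L$. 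By Proposition~\ref{lem:MCstandard}, this splitting identifies $\MC(\basic \otimes \maximalidealofartin)$ with the set ${\rm Sd}(L,A,\v)$ of standard deformations of $L = A \bowtie B$ via $\xi \mapsto I_\xi$.

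The core step is to show that $\operatorname{hAut}(L_\v^0)$ preserves ${\rm Sd}(L,A,\v)$ and that its action, obtained by restricting the action $\triangleright$ of Equation~\eqref{Eq:Def of action} to the subgroup, coincides with the \emph{gauge action} of the dg Lie algebra $\basic \otimes \maximalidealofartin$; that is,
\[
  \exp(\operatorname{L}_b) \triangleright \xi \;=\; e^{b} \ast \xi, \qquad b \in \Gamma(B) \otimes \maximalidealofartin,\ \ \xi \in \MC(\basic \otimes \maximalidealofartin)
\]
(since $\basic$ is a dg Lie algebra, its $L_\infty$-exponential reduces to the classical one). The verification is a computation of exactly the shape appearing in the proof of Proposition~\ref{Lem:gaugeEquivalent} and Lemma~\ref{Lemma in appendix}, applied to the nilpotent derivation $\delta := \operatorname{L}_b$ of $L_\v^0$. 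Two simplifications make this work cleanly: first, $[-,-,-]_3$ vanishes for $\basic$ in the matched case, so the iterates $e^k_\xi(\delta)$ of Equations~\eqref{Eqt:e1xib0}--\eqref{Eqt:enxib0} telescope to the classical Campbell--Hausdorff iterates of $e^{b}\ast\xi$; second, $j(b) = b$, together with $[\operatorname{L}_b]_1 = \dCE b$ and the fact that the $\Der(L)$-action of $\operatorname{L}_b$ on $\Omega_A^\bullet(B)$ agrees with the internal bracket $[b,-]_2$ of $\basic$ (read off from the formulas in Proposition~\ref{prop:extended} and the matched pair structure), identifies the geometric transport $\prb \circ \exp(\operatorname{L}_b) \circ I_\xi \circ \Pi_\xi^{-1}$ with $e^{b}\ast\xi$.

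Granting this identity, I would then argue exactly as in Propositions~\ref{prop: isom standard deformation} and~\ref{Lem:gaugeEquivalent}: for $\xi,\eta \in {\rm Sd}(L,A,\v)$, the standard deformations $I_\xi$ and $I_\eta$ are isomorphic in the sense of Definition~\ref{Def:half-iso} if and only if $\eta = \exp(\operatorname{L}_b) \triangleright \xi$ for some $b$, i.e.\ if and only if $\xi$ and $\eta$ are gauge equivalent in $\basic \otimes \maximalidealofartin$; here the forward implication builds the required commutative square from $\Pi = \exp(\operatorname{L}_b)$ and $\Pi_A = \prav \circ \Pi \circ I_\xi$ (a small automorphism of $A_\v$ by Proposition~\ref{prop: Isom of vector bundles}), and the converse unwinds Definition~\ref{Def:half-iso} and invokes the displayed identity. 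This yields a well-defined, injective natural transformation $\gamma \colon {\rm Def}_{\basic} \Rightarrow \operatorname{hDef}_{A \bowtie B}$, $\gamma_\v([\xi]) := [I_\xi]$; surjectivity follows from Proposition~\ref{Lem:standard}, since every infinitesimal deformation of $A \bowtie B$ is strictly isomorphic to a standard one and a strict isomorphism is the special case $b = 0$ of Definition~\ref{Def:half-iso}; naturality in $\v$ is checked on representatives as in the proof of Theorem~\ref{Thm:MainTheorem}. Hence $\gamma$ is a natural isomorphism.

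The main obstacle is precisely the identity $\exp(\operatorname{L}_b) \triangleright \xi = e^{b}\ast\xi$: one must match the geometric action of the subgroup $\operatorname{hAut}(L_\v^0)$ on standard deformations with the gauge action of the degree-zero part $\Gamma(B)$ of $\basic$. This requires a careful check that the inner derivation $\operatorname{L}_b$ of $L$ maps, under $[-]_1$ and the $\Der(L)$-action, to exactly the operators $\dCE b$ and $[b,-]_2$ that govern the gauge action inside $\basic$, and that the combinatorial recursion of the appendix collapses to the classical exponential once $[-,-,-]_3 = 0$. All remaining steps are bookkeeping transported from Section~\ref{sec:infinitesimaldeform}.
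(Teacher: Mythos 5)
Your proposal is correct and follows essentially the same route as the paper: reduce to standard realizations via Propositions~\ref{Lem:standard} and~\ref{lem:MCstandard}, then restrict the action~\eqref{Eq:Def of action} and Proposition~\ref{Lem:gaugeEquivalent} to the subgroup $\operatorname{hAut}(L_\v^0)$, i.e.\ to derivations of the form $\delta=\adLb$. The compatibility you single out as the ``main obstacle'' --- that the $\h$-gauge action of $\adLb$ collapses to the $\basic$-gauge action of $b$ because $[\adLb]_1=\dCE b$, $[\adLb,-]_2=[b,-]_2$, and $\pra\circ\adLb\circ j=0$ kills the $3$-bracket --- is exactly the check the paper leaves implicit when it invokes Proposition~\ref{Lem:gaugeEquivalent}, so making it explicit is a refinement rather than a deviation.
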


\begin{proof}
  Since $B \subset L$ is also a Lie subalgebroid in this case, by Proposition~\ref{Lem:standard}, each infinitesimal deformation $I$ admits a canonical standard realization $I_\xi$ for some $\xi \in \Gamma(A \otimes B^\ast) \otimes \maximalidealofartin$ satisfying~\eqref{Eqt:standarddeformation}.
On the other hand, by Proposition~\ref{prop: isom standard deformation}, the restriction of the map~\eqref{Eq:Def of action}
defines an action of $\operatorname{hAut} (L_\v^0)$ on the set ${\rm Sd}(L, A, \v)$ of standard realizations, such that the standard deformations $I_\xi$ and $I_\eta$ are  isomorphic if and only if $\eta=\Pi \triangleright \xi$ for some $\Pi \in \operatorname{hAut} (L_\v^0)$.
As a consequence, the set $\infDefFctLA(\v)$ of weak isomorphism classes of infinitesimal deformations is isomorphic to the set of orbits ${\rm Sd}(L, A, \v)/ \operatorname{hAut} (L_\v^0)$ of this subgroup action.

On the other hand, by Proposition~\ref{lem:MCstandard}, the set ${\rm Sd}(L, A, \v)$  of standard deformations can be identified with the set $\operatorname{MC}(\Omega_A^\bullet(B) \otimes \maximalidealofartin)$ of Maurer-Cartan elements of the dg Lie algebra $\Omega_A^\bullet(B) \otimes \maximalidealofartin$.
Meanwhile, by Proposition~\ref{Lem:gaugeEquivalent},  two Maurer-Cartan elements $\xi$ and $\eta$ are gauge equivalent via an inner derivation $\adLb$ of $L_\v^0$ for some $b\in\Gamma(B)\otimes\maximalidealofartin$,  if and only if standard deformations  $I_\xi$ and $I_\eta$ are isomorphic via $\exp(\adLb)\in\operatorname{hAut}(L_\v^0)$. The conclusion is thus immediate.
\end{proof}

\subsection{Examples}
\subsubsection{Deformation of complex structures}
Recall from \cite{Manetti} that an infinitesimal deformation of a complex manifold $X$ over ${\rm Spec}(\v)$ is a morphism of sheaves of $\C$-algebras $\mathcal{F}\to \mathcal{O}_X$ over $X$. It is required that $\mathcal{F}$  is flat over $\v$ and $\mathcal{F}\otimes_\v \C\to \mathcal{O}_X $ is
an isomorphism.  Two infinitesimal deformations $\mathcal{F}_1$ and $\mathcal{F}_2$ are said to be isomorphic if there exists an $\v$-linear isomorphism $\phi\colon \mathcal{F}_1 \to \mathcal{F}_2$ such that the following diagram is commutative
\[
\begin{tikzcd}
	\mathcal{F}_1  \ar{dr}\ar{rr}{\phi} &  & \mathcal{F}_2 \ar{dl} \\
	  & \mathcal{O}_X. &
\end{tikzcd}
\]

The assignment for each local Artinian $\C$-algebra $\v$ the set ${\rm Def}_X(\v)$ of isomorphism classes of infinitesimal deformations of $X$ over ${\rm Spec}(\v)$ defines a functor
\[
 {\rm Def}_X \colon \textbf{Art} \to \textbf{Set}.
\]
Note that,  the complexified tangent bundle $T^\mathbb{C}_X = T_X^{0,1} \bowtie T_X^{1,0}$ is a matched Lie pair.
We claim that the deformation functor $ {\rm Def}_X$ of complex structures on $X$ is isomorphic to the deformation functor $\operatorname{hDef}_{T_X^{0,1} \bowtie T_X^{1,0}}$ of the matched Lie pair $(T^\mathbb{C}_X, T_X^{0,1})$.

In fact, the dg Lie algebra $\Omega_A^\bullet(B)$ under this circumstance is exactly the Kodaira-Spencer algebra
\[
{\rm KS}_X := \Omega_X^{0,\bullet}(T_X^{1,0})  = \Gamma(\Lambda^\bullet (T^{0,1}_X)^*\otimes T^{1,0}_X).
\]
Any infinitesimal deformation of the matched Lie pair $(T^\mathbb{C}_X, T_X^{0,1})$ is realized by some  Maurer-Cartan element  $\xi \in \Omega_X^{0,1}(T_X^{1,0})\otimes_\C\maximalidealofartin$ such that it is of the form:
\[
I_\xi = i + \xi \colon T_X^{0,1} \otimes \v \hookrightarrow T^{\mathbb{C}}_X \otimes \v.
\]
Consider the composition
\[
I_\xi^\ast \circ d \colon \Omega_X^{0,0} \otimes \v \to \Omega_X^{0,1}\otimes \v
\]
of the de Rham differential $d$ and the linear dual $I_\xi^\ast$ of the bundle map $I_\xi$.  Here $\Omega_X^{0,0}$ coincides with the space of smooth $\mathbb{C}$-valued functions on $X$. It can be seen that    $I_\xi^\ast \circ d = \bar{\partial} + \xi\lrcorner\partial$, and hence we define a sheaf as the kernel
\[
\mathcal{F}_\xi := \ker(\Omega_X^{0,0} \otimes_\C \v \xrightarrow{\bar{\partial} + \xi\lrcorner\partial} \Omega_X^{0,1} \otimes \v).
\]
It is clear that $\mathcal{F}_\xi$, together with the evaluation map $\ev \colon \v \to \C$, defines an infinitesimal deformation $\mathcal{F}_\xi \to \O_X$ of $X$.

On the other hand, if two Maurer-Cartan elements $\xi$ and $\eta$ are gauge equivalent via  some $b \in \Gamma(T_X^{1,0}) \otimes \maximalidealofartin$, i.e., $\eta = e^b \ast \xi$.
Then
by Proposition~\ref{Lem:gaugeEquivalent}, the standard deformations $I_\xi$ and $I_\eta$ are isomorphic via the small automorphism $\exp(\adLb)$, i.e., $I_\eta = \exp(\adLb)\circ I_\xi$. Here $\adLb  \colon \Omega_X^{0,\bullet}\otimes \v \to \Omega_X^{0,\bullet}\otimes \v$ is the   inner Lie derivative     of $T_X^\C \otimes \v$ along $b$.
 It follows that the following diagram is commutative:
\[
\begin{tikzcd}
		 \Omega_X^{0,0}\otimes \v \arrow[d,"{e^{b}}"] \arrow[r,"I_\xi^\ast \circ d"] & \Omega_X^{0,1}\otimes \v \ar[d, "e^b"] \\
		 \Omega_X^{0,0}\otimes \v \arrow[r,"I_\eta^\ast \circ d"] & \Omega_X^{0,1}\otimes \v .
	\end{tikzcd}
\]

Further, it can be verified that $\mathcal{F}_\xi$ and $\mathcal{F}_\eta$ are isomorphic via the following diagram:
 \[
\begin{tikzcd}
		0 \arrow[r] & \mathcal{F}_\xi \arrow[r]\arrow[d,"{\cong}"] & \Omega_X^{0,0}\otimes \v\arrow[d,"{e^{b}}"] \arrow[r,"{\bar{\partial}+\xi\lrcorner\partial}"] & \Omega_X^{0,1} \otimes \v \ar[d, "e^b"] \\
		0 \arrow[r] & \mathcal{F}_\eta \arrow[r] & \Omega_X^{0,0}\otimes \v\ \arrow[r,"{\bar{\partial} + \eta\lrcorner\partial}"] & \Omega_X^{0,1} \otimes \v.
	\end{tikzcd}
\]

So  in this setting, what Theorem \ref{Thm: matched pair} states is exactly the well-known fact of
isomorphism of functors
  \[
     \gamma \colon \operatorname{Def}_{{\rm KS}_X} \cong \operatorname{Def}_X.
  \]
The reader may wish to see \cites{GM, Manetti} for more in-depth    discussions on this subject.

\subsubsection{Transversely holomorphic foliations}
Assume that $\mathcal{F}$ is a transversely holomorphic foliation defined on a compact manifold $M$, with real dimension $p$ and complex codimension $q$. Such a foliation structure leads to a connection between the geometry of the said manifold and the algebraic properties of the associated bundles \cite{DK}.

Let $F$ denote the tangent bundle   of the foliation $\mathcal{F}$. A key feature of a transversely holomorphic foliation is that its transverse complex structure induces a well-defined complex structure on the normal bundle $B := T_M/F$. This normal bundle captures the behavior of the manifold $M$ in directions transverse to the leaves of $\mathcal{F}$.
Upon complexifying the normal bundle, we obtain a decomposition
\[
B^{\mathbb{C}} = B^{1,0} \oplus B^{0,1},
\]
where $B^{1,0}$ and $B^{0,1}$ are the holomorphic and anti-holomorphic components respectively.

Let $F^{\mathbb{C}}$ be the complexified tangent bundle of the foliation $\mathcal{F}$. Then the direct sum
\[
A := F^{\mathbb{C}} \oplus B^{0,1}
\]
is a Lie subalgebroid of the complexified tangent bundle $T_M^{\mathbb{C}}$ of $M$. Thus, $(T_M^{\mathbb{C}}, A)$ is a Lie pair. Moreover, the quotient bundle $T_M^{\mathbb{C}}/A$ is naturally identified with $B^{1,0}$. Hence, we obtain a matched Lie pair
\[
T_M^{\mathbb{C}} = A \bowtie B^{1,0},
\]
which plays a crucial role in the deformation theory of the initial foliation $\mathcal{F}$.
In fact,  the set of isomorphism classes of infinitesimal deformations of this matched Lie pair $A \bowtie B^{1,0}$ is isomorphic to the set of infinitesimal deformations of the transversely holomorphic foliation $\mathcal{F}$~\cites{Spencer1,Spencer2, DK,Gomez}.
This correspondence provides a useful tool for studying the deformation theory of $\mathcal{F}$ from the perspective of Lie algebroids and their deformations. Thus, by Theorem~\ref{Thm: matched pair}, we recover the following fundamental result:
\begin{Thm}[\cites{DK,Gomez}]
	The infinitesimal deformations of the transversely holomorphic foliation $\mathcal{F}$ are controlled by the dg Lie algebra $\Gamma\big(\wedge^\bullet (F^{\mathbb{C}} \oplus B^{0,1})^\ast \otimes B^{1,0}\big)$.
\end{Thm}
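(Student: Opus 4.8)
The plan is to obtain the statement as a direct corollary of Theorem~\ref{Thm: matched pair}, once the transversely holomorphic foliation $\mathcal{F}$ has been packaged as the matched Lie pair $T_M^{\mathbb{C}} = A \bowtie B^{1,0}$ with $A = F^{\mathbb{C}} \oplus B^{0,1}$ as described above. First I would record that $(T_M^{\mathbb{C}}, A)$ really is a matched Lie pair: the transverse holomorphic structure of $\mathcal{F}$ endows $B = T_M/F$ with a complex structure, hence yields the decomposition $B^{\mathbb{C}} = B^{1,0}\oplus B^{0,1}$; the integrability conditions defining a transversely holomorphic foliation are precisely what makes $A = F^{\mathbb{C}}\oplus B^{0,1}$ an involutive (Lie) subalgebroid of $T_M^{\mathbb{C}}$ and, simultaneously, $T_M^{\mathbb{C}}/A \cong B^{1,0}$ closed under the Lie bracket, so that $B^{1,0}$ is a complementary Lie subalgebroid. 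This is standard, and I would cite \cites{DK,Gomez} rather than reprove it.

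Next I would invoke the identification --- established in \cites{Spencer1,Spencer2,DK,Gomez} and recalled just before the statement --- between the infinitesimal deformation functor of $\mathcal{F}$ as a transversely holomorphic foliation and the functor $\operatorname{hDef}_{A \bowtie B^{1,0}}$. Concretely, an infinitesimal deformation of $\mathcal{F}$ over ${\rm Spec}(\v)$ amounts to an $\v$-family of involutive subbundles $A_\v \subset T_M^{\mathbb{C}}\otimes\v$ centered on $A$ and compatible with the transverse complex structure, and two such are declared isomorphic when related by the flow generated by a section of the normal direction $B^{1,0}\otimes\maximalidealofartin$; this is exactly the datum classified by infinitesimal deformations of the matched Lie pair modulo $\operatorname{hAut}(L^0_\v)$ as in Definition~\ref{Def:half-iso}, and the resulting bijection is natural in $\v \in \ArtCat$.

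Finally I would apply Theorem~\ref{Thm: matched pair} to $A \bowtie B^{1,0}$: since the third bracket of the basic cubic $L_\infty$-algebra vanishes in the matched case, $\Omega_A^\bullet(B^{1,0}) = \Gamma\big(\wedge^\bullet A^\vee \otimes B^{1,0}\big) = \Gamma\big(\wedge^\bullet(F^{\mathbb{C}}\oplus B^{0,1})^\ast \otimes B^{1,0}\big)$ is a dg Lie algebra (unary bracket the Chevalley--Eilenberg differential $\dCE$ of the Bott $A$-connection on $B^{1,0}$, binary bracket the induced graded Lie bracket), and $\operatorname{hDef}_{A \bowtie B^{1,0}}$ is isomorphic to its algebraic deformation functor. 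Composing this with the functor isomorphism of the previous paragraph gives exactly the assertion that the infinitesimal deformations of $\mathcal{F}$ are controlled by $\Gamma\big(\wedge^\bullet(F^{\mathbb{C}}\oplus B^{0,1})^\ast \otimes B^{1,0}\big)$. The only non-formal step is the second one --- matching the classical description of deformations of transversely holomorphic foliations with the $\operatorname{hDef}$ functor --- which I would not reprove here; if one wanted it explicitly, the correspondence can be written in the style of the complex-structure example above, realizing a deformation by a Maurer--Cartan element $\xi \in \Omega^1_A(B^{1,0})\otimes\maximalidealofartin$ (so $\dCE\xi + \tfrac12[\xi,\xi]_2 = 0$) giving the perturbed subbundle $I_\xi = i + \xi$, with gauge equivalence by $b \in \Gamma(B^{1,0})\otimes\maximalidealofartin$ realized via $\exp(\adLb)$.
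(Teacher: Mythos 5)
Your proposal follows the same route as the paper: realize $\mathcal{F}$ as the matched Lie pair $T_M^{\mathbb{C}} = (F^{\mathbb{C}}\oplus B^{0,1}) \bowtie B^{1,0}$, cite \cites{Spencer1,Spencer2,DK,Gomez} for the identification of the classical deformation functor of $\mathcal{F}$ with $\operatorname{hDef}_{A\bowtie B^{1,0}}$, and then apply Theorem~\ref{Thm: matched pair}. This matches the paper's argument, which likewise treats the second step as an external input rather than reproving it, so your write-up is correct and essentially identical in structure.
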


To the best of our knowledge, comprehensive results on the existence of local moduli spaces for smooth foliations are currently lacking (see \cite{MNR}). Consequently, this problem warrants further investigation and is a direction for future research.

\appendix

\section{Proof of Lemma~\ref{Lemma in appendix}} \label{SubsubSec:lemmaofgaugeequivalence}

We proceed by induction on $k$. When $k=0$, since $x^0(a) = a, y^0(a) = \xi(a)$, Lemma~\ref{Lemma in appendix} holds trivially, as
\[
y^0(a)= \xi(a) = -e^0_\xi(\delta)(a) = -e^0_\xi(\delta) \circ x^0(a),
\]
for all $a \in \Gamma(A)$.

Suppose that Lemma~\ref{Lemma in appendix} holds for $k \leqslant n$ for some $n\geqslant 1$.
For the inductive step  $k=n+1$, we note that
\begin{align}\nonumber
x^{n+1}(a) &=(\prav \circ \delta^{n+1} \circ I_\xi) (a) = \prav \circ \delta \circ \bigl(\delta^{n}  (I_\xi(a)) \bigr)\\
	&=\prav \circ \delta \bigl(x^n(a)+j(y^n(a)) \bigr), \label{Eqn:inductivexk+10}
\end{align}
and that
\begin{equation}\label{Eqt:inductiveyk+10}
	y^{n+1}(a) = (\prbv \circ\delta)(x^n(a)+ (j \circ y^n)(a)).
\end{equation}
for all $a \in \Gamma(A)$.
We also need to compute $e^{n+1-m}_\xi(\delta) \circ x^m$ for all $0\leqslant m \leqslant n$. We divide the computation into the following three cases:
\begin{enumerate}
	\item For $m=0$, by Equation \eqref{Eqt:enxib0}, one has
\begin{align*}
	&e^{n+1}_\xi (\delta) = [\delta, e^n_\xi(\delta)]_2 - [\delta, \xi,e^n_\xi(\delta)]_3 + \frac{1}{2}\sum_{\begin{subarray}{c}p+q=n\\p\geqslant 1, q\geqslant 1
		\end{subarray}} \frac{n!}{p! q!}[\delta, e^{p}_\xi(\delta ), e^{q}_\xi(\delta )]_3 \\
	&=(\prbv \circ\delta\circ j) \circ e^n_\xi(\delta) - e^n_\xi(\delta) \circ\prav \circ\delta - e^n_\xi(\delta)\circ (\prav \circ\delta \circ j) \circ \xi - \xi \circ (\prav \circ\delta \circ j) \circ e^n_\xi(\delta)\\
	&\quad +\sum_{p=1}^{n-1}\tbinom{n}{p}e^{n-p}_\xi(\delta)\circ (\prav \circ\delta \circ j) \circ e^p_\xi(\delta)\\
	&=(\prbv \circ\delta\circ j) \circ  e^n_\xi(\delta) - e^n_\xi(\delta)\circ\prav \circ\delta \circ (x^0+j\circ y^0) +\sum_{p=1}^n\tbinom{n}{p}e^{n-p}_\xi(\delta)\circ\prav \circ\delta \circ e^p_\xi(\delta)\\
	&=(\prbv \circ\delta \circ j)\circ e^n_\xi(\delta) -e^n_\xi(\delta)\circ x^1 +
	\sum_{p=1}^n\tbinom{n}{p}e^{n-p}_\xi(\delta)\circ (\prav \circ\delta\circ j) \circ e^p_\xi(\delta).
	\end{align*}
\item  For all $m= 1,\cdots,n-1$, by Equation \eqref{Eqt:enxib0} and the inductive assumption,  we have
\begin{align*}
	e^{n-m+1}_\xi(\delta)& \circ x^m  =(\prbv \circ\delta\circ j) \circ  e^{n-m}_\xi(\delta)\circ x^m - e^{n-m}_\xi(\delta)\circ (\prav \circ\delta) \circ x^m \\
&\quad + e^{n-m}_\xi(\delta)\circ(\prav \circ\delta \circ j) \circ e^0_\xi(\delta)\circ x^m \\
&\qquad + \sum_{p=1}^{n-m}\tbinom{n-m}{p}e^{n-m-p}_\xi(\delta)\circ (\prav \circ\delta \circ j) \circ e^p_\xi(\delta)\circ x^m\\
	&=(\prbv \circ\delta \circ j)\circ e^{n-m}_\xi(\delta)\circ x^m - e^{n-m}_\xi(\delta)\circ\prav \circ \delta \circ (x^m+ j \circ y^m)\\
	&\qquad -e^{n-m}_\xi(\delta)\circ (\prav \circ\delta \circ j) \circ
	\Big( \sum_{i=0}^m\tbinom{m}{i}e^i_\xi(\delta)\circ x^{m-i} \Big) \\
&\quad\qquad +\sum_{p=0}^{n-m} \tbinom{n-m}{p}e^{n-m-p}_\xi(\delta)\circ (\prav \circ\delta \circ j) \circ e^p_\xi(\delta) \circ x^m\\
	&=(\prbv \circ\delta \circ j) \circ e^{n-m}_\xi(\delta)\circ x^m - e^{n-m}_\xi(\delta)\circ x^{m+1} \\
&\qquad - e^{n-m}_\xi(\delta)\circ (\prav \circ\delta \circ j) \circ
	\Big(\sum_{i=0}^m\tbinom{m}{i}e^i_\xi(\delta)\circ x^{m-i} \Big) \\
&\qquad +\sum_{p=0}^{n-m} \tbinom{n-m}{p}e^{n-m-p}_\xi(\delta) \circ (\prav \circ\delta \circ j) \circ  e^p_\xi(\delta)\circ x^m,
	\end{align*}
where we have used Equation \eqref{Eqn:inductivexk+10} in the last step.
\item For $m=n$, by Equation \eqref{Eqt:e1xib0}, we have
\begin{align*}
	&\quad e^1_\xi(\delta)\circ x^n \\
	&=-\prbv \circ\delta\circ x^n- (\prbv \circ\delta\circ j) \circ  \xi\circ x^n + \xi\circ (\prav \circ\delta) \circ x^n +\xi \circ(\prav \circ\delta\circ j) \circ \xi\circ x^n\\
	&=-\prbv \circ\delta \circ x^n- (\prbv \circ\delta \circ j)\circ \xi \circ x^n - e^0_\xi(\delta)\circ\prav \circ \delta \circ(x^n+j \circ y^n)\\
	&\quad -e^0_\xi(\delta)\circ (\prav \circ\delta\circ j) \circ \Big(\sum_{i=0}^n\tbinom{n}{i}e^i_\xi(\delta)\circ x^{n-i} \Big)\\
	&=-\prbv \circ\delta \circ x^n- (\prbv \circ\delta\circ j) \circ \xi\circ x^n-e^0_\xi(\delta)\circ x^{n+1}\\	
&\quad -e^0_\xi(\delta)\circ (\prav \circ\delta \circ j) \circ \Big(\sum_{i=0}^n\tbinom{n}{i} e^i_\xi(\delta)\circ x^{n-i} \Big).
	\end{align*}
\end{enumerate}
	Summing up the above three equalities, we obtain
	\begin{align*}
	\sum_{m=0}^n & \tbinom{n}{m}e^{n+1-m}_\xi(\delta)\circ x^m
	=e^{n+1}_\xi(\delta) + \sum_{m=1}^{n-1}\tbinom{n}{m}e^{n+1-m}_\xi(\delta)\circ x^m+
	e^1_\xi(\delta)\circ x^n\\
	&=(\prbv \circ\delta\circ j) \circ \Big(\sum_{m=0}^{n}\tbinom{n}{m}e^{n-m}_\xi(\delta)\circ x^m\Big) -\sum_{m=0}^{n}\tbinom{n}{m}e^{n-m}_\xi(\delta)\circ x^{m+1} - \prbv \circ\delta\circ x^n\\
	&\qquad -\sum_{m=1}^n \sum_{i=1}^m\tbinom{n}{m}\tbinom{m}{i}
		e^{n-m}_\xi(\delta)\circ(\prav \circ\delta \circ j) \circ e^i_\xi(b)\circ x^{m-i}\\
	&\qquad\qquad +\sum_{m=0}^{n-1}\sum_{p=0}^{n-m}\tbinom{n}{m}\tbinom{n-m}{p}
		e^{n-m-p}_\xi(\delta)\circ (\prav \circ\delta \circ j) \circ e^p_\xi(\delta)\circ x^m\\
	&=-\prbv \circ\delta\circ j \circ  y^{n} -\sum_{m=0}^{n}\tbinom{n}{m}e^{n-m}_\xi(\delta)\circ x^{m+1}-\prbv \circ\delta\circ x^n \quad \text{by Equation ~\eqref{Eqt:inductiveyk+10}} \\
	&=-y^{n+1}-\sum_{m=0}^{n}\tbinom{n}{m}e^{n-m}_\xi(\delta)\circ x^{m+1}.
	\end{align*}
Thus, we have
\begin{align*}
	y^{n+1}&=-\sum_{m=0}^n\tbinom{n}{m}e^{n+1-m}_\xi(\delta)\circ x^{m}
	-\sum_{m=0}^n\tbinom{n}{m}e^{n-m}_\xi(\delta)\circ x^{m+1}\\
	&=-\sum_{m=0}^n\tbinom{n}{m}e^{n+1-m}_\xi(\delta)\circ x^{m}
	-\sum_{m=1}^{n+1}\tbinom{n}{m-1}e^{n+1-m}_\xi(\delta)\circ x^{m}\\
	&=-\sum_{m=0}^{n+1}\tbinom{n+1}{m}e^{n+1-m}_\xi(\delta)\circ x^{m},
	\end{align*}
which proves the case for $k=n+1$ as desired.
The proof of Lemma~\ref{Lemma in appendix} is complete.

\begin{bibdiv}
\begin{biblist}

\bib{BCSX}{article}{
	author={Bandiera, R.},
	author={Chen, Z.},
	author={Sti\'{e}non, M.},
	author={Xu, P.},
	title={Shifted derived Poisson manifolds associated with Lie pairs},
	journal={Comm. Math. Phys.},
	volume={375},
	date={2020},
	number={3},
	pages={1717--1760},
	issn={0010-3616},
}

\bib{Cattaneo}{article}{
	author={Cattaneo, A.},
	author={Felder, G.},
	title={Coisotropic submanifolds in Poisson geometry and branes in the Poisson sigma model},
	journal={Lett. Math. Phys.},
	volume={69},
	date={2004},
	pages={157--175},
	issn={0377-9017},
}

\bib{DK}{article}{
   author={Duchamp, T.},
   author={Kalka, M.},
   title={Deformation theory for holomorphic foliations},
   journal={J. Differential Geometry},
   volume={14},
   date={1979},
   number={3},
   pages={317--337 (1980)},
   issn={0022-040X},
}

\bib{JA2007}{article}{
	author={Chen, Z.},
	author={Liu, Z.},
	title={On (co-)morphisms of Lie pseudoalgebras and groupoids},
	journal={J. Algebra},
	volume={316},
	date={2007},
	number={1},
	pages={1--31},
	issn={0021-8693},
}

\bib{CMP16}{article}{
	author={Chen, Z.},
	author={Sti\'{e}non, M.},
	author={Xu, P.},
	title={From Atiyah classes to homotopy Leibniz algebras},
	journal={Comm. Math. Phys.},
	volume={341},
	date={2016},
	number={1},
	pages={309--349},
	issn={0010-3616},
}

\bib{Crainic2008}{article}{
	author={Crainic, M.},
	author={Moerdijk, I.},
	title={Deformations of Lie brackets: cohomological aspects},
	journal={J. Eur. Math. Soc. (JEMS)},
	volume={10},
	date={2008},
	number={4},
	pages={1037--1059},
	issn={1435-9855},
}

\bib{Crainic2014}{article}{
	author={Crainic, M.},
	author={Sch\"{a}tz, F.},
	author={Struchiner, I.},
	title={A survey on stability and rigidity results for Lie algebras},
	journal={Indag. Math. (N.S.)},
	volume={25},
	date={2014},
	number={5},
	pages={957--976},
	issn={0019-3577},
}

\bib{DelF}{article}{
   author={del Hoyo, M.},
   author={Fernandes, R.},
   title={On deformations of compact foliations},
   journal={Proc. Amer. Math. Soc.},
   volume={147},
   date={2019},
   number={10},
   pages={4555--4561},
   issn={0002-9939},
}

\bib{Getzler}{article}{
	author={Getzler, E.},
	title={Lie theory for nilpotent $L_\infty$-algebras},
	journal={Ann. of Math. (2)},
	volume={170},
	date={2009},
	number={1},
	pages={271--301},
	issn={0003-486X},
}

\bib{GHS}{article}{
   author={Girbau, J.},
   author={Haefliger, A.},
   author={Sundararaman, D.},
   title={On deformations of transversely holomorphic foliations},
   journal={J. Reine Angew. Math.},
   volume={345},
   date={1983},
   pages={122--147},
   issn={0075-4102},
}

\bib{GM}{article}{
   author={Goldman, W. M.},
   author={Millson, J. J.},
   title={The homotopy invariance of the Kuranishi space},
   journal={Illinois J. Math.},
   volume={34},
   date={1990},
   number={2},
   pages={337--367},
   issn={0019-2082},
}

\bib{Gomez}{article}{
   author={G\'omez-Mont, X.},
   title={Transversal holomorphic structures},
   journal={J. Differential Geometry},
   volume={15},
   date={1980},
   number={2},
   pages={161--185 (1981)},
   issn={0022-040X},
}

\bib{GMS}{article}{
   author={Gualtieri, M.},
   author={Matviichuk, M.},
   author={Scott, G.},
   title={Deformation of Dirac structures via $L_\infty$ algebras},
   journal={Int. Math. Res. Not. IMRN},
   date={2020},
   number={14},
   pages={4295--4323},
   issn={1073-7928},
}

\bib{Guan}{article}{
   author={Guan, A.},
   title={Gauge equivalence for complete $L_\infty$-algebras},
   journal={Homology Homotopy Appl.},
   volume={23},
   date={2021},
   number={2},
   pages={283--297},
   issn={1532-0073},
}

\bib{Heitsch}{article}{
   author={Heitsch, J.},
   title={A cohomology for foliated manifolds},
   journal={Comment. Math. Helv.},
   volume={50},
   date={1975},
   pages={197--218},
   issn={0010-2571},
}

\bib{Iacano}{article}{
   author={Iacono, D.},
   title={$L_\infty$-algebras and deformations of holomorphic maps},
   journal={Int. Math. Res. Not. IMRN},
   date={2008},
   number={8},
   pages={Art. ID rnn013, 36},
   issn={1073-7928},
}

\bib{jixiang}{article}{
	author={Ji, X.},
	title={Simultaneous deformations of a Lie algebroid and its Lie
		subalgebroid},
	journal={J. Geom. Phys.},
	volume={84},
	date={2014},
	pages={8--29},
	issn={0393-0440},
}

\bib{jixiang2017JGP}{article}{
	author={Ji, X.},
	title={On equivalence of deforming Lie subalgebroids and deforming
		coisotropic submanifolds},
	journal={J. Geom. Phys.},
	volume={116},
	date={2017},
	pages={258--270},
	issn={0393-0440},
}

\bib{Kap}{article}{
   author={Kapranov, M.},
   title={Rozansky-Witten invariants via Atiyah classes},
   journal={Compositio Math.},
   volume={115},
   date={1999},
   number={1},
   pages={71--113},
}

\bib{LM95}{article}{
	author={Lada, T.},
	author={Markl, M.},
	title={Strongly homotopy Lie algebras},
	journal={Comm. Algebra},
	volume={23},
	date={1995},
	number={6},
	pages={2147--2161},
	issn={0092-7872},
}

\bib{Camille-S-X-2021}{article}{
	author={Laurent-Gengoux, C.},
	author={Sti\'{e}non, M.},
	author={Xu, P.},
	title={Poincar\'{e}-Birkhoff-Witt isomorphisms and Kapranov dg-manifolds},
	journal={Adv. Math.},
	volume={387},
	date={2021},
	pages={Paper No. 107792, 62},
	issn={0001-8708},
}

\bib{Manetti}{article}{
	author={Manetti, M.},
	title={Lectures on deformations of complex manifolds (deformations from
		differential graded viewpoint)},
	journal={Rend. Mat. Appl. (7)},
	volume={24},
	date={2004},
	number={1},
	pages={1--183},
	issn={1120-7183},
}

\bib{ManettiA}{article}{
	author={Manetti, M.},
	title={Deformation theory via differential graded Lie algebras},
	conference={
		title={Algebraic Geometry Seminars, 1998--1999 (Italian) (Pisa)},
	},
	book={
		publisher={Scuola Norm. Sup., Pisa},
	},
	date={1999},
	pages={21--48},
}

\bib{MNR}{article}{
   author={Meersseman, L.},
   author={Nicolau, M.},
   author={Rib\'on, J.},
   title={On the automorphism group of foliations with geometric transverse
   structures},
   journal={Math. Z.},
   volume={301},
   date={2022},
   number={2},
   pages={1603--1630},
   issn={0025-5874},
}

\bib{Mehta-Zambon}{article}{
	author={Mehta, R.},
	author={Zambon, M.},
	title={$L_\infty$-algebra actions},
	journal={Differential Geom. Appl.},
	volume={30},
	date={2012},
	number={6},
	pages={576--587},
	issn={0926-2245},
}

\bib{Nicolau}{article}{
   author={Nicolau, M.},
   title={Deformations of holomorphic and transversely holomorphic
   foliations},
   language={English, with English and French summaries},
   conference={
      title={Complex manifolds, foliations and uniformization},
   },
   book={
      series={Panor. Synth\`eses},
      volume={34/35},
      publisher={Soc. Math. France, Paris},
   },
   isbn={978-2-85629-358-4},
   date={2011},
   pages={259--297},
}

\bib{NCCH}{article}{
	author={Ni, D.},
	author={Cheng, J.},
	author={Chen, Z.},
	author={He, C.},
	title={Internal symmetry of the $L_{\leqslant3}$ algebra arising from a
		Lie pair},
	journal={Pure Appl. Math. Q.},
	volume={19},
	date={2023},
	number={4},
	pages={2195--2234},
	issn={1558-8599},
}

\bib{Nijenhuis-Richardson}{article}{
	author={Nijenhuis, A.},
	author={Richardson, R. W.},
	title={Deformations of homomorphisms of Lie groups and Lie algebras},
	journal={Bull. Amer. Math. Soc.},
	volume={73},
	date={1967},
	pages={175--179},
	issn={0002-9904},
}

\bib{Oh}{article}{
	author={Oh, Y.},
	author={Park, J.},
	title={Deformations of coisotropic submanifolds and strong homotopy Lie
		algebroids},
	journal={Invent. Math.},
	volume={161},
	date={2005},
	number={2},
	pages={287--360},
	issn={0020-9910},
}

\bib{Richardson}{article}{
	author={Richardson, R. W.},
	title={Deformations of subalgebras of Lie algebras},
	journal={J. Differential Geometry},
	volume={3},
	date={1969},
	pages={289--308},
	issn={0022-040X},
}

\bib{Spencer1}{article}{
   author={Spencer, D. C.},
   title={Deformation of structures on manifolds defined by transitive,
   continuous pseudogroups. I. Infinitesimal deformations of structure},
   journal={Ann. of Math. (2)},
   volume={76},
   date={1962},
   pages={306--398},
   issn={0003-486X},
}

\bib{Spencer2}{article}{
   author={Spencer, D. C.},
   title={Deformation of structures on manifolds defined by transitive,
   continuous pseudogroups. II. Deformations of structure},
   journal={Ann. of Math. (2)},
   volume={76},
   date={1962},
   pages={399--445},
   issn={0003-486X},
}

\end{biblist}
\end{bibdiv}

\end{document}